\tikzstyle{terminator} = [rectangle, draw, text centered, rounded corners]
\tikzstyle{process} = [rectangle, draw, text centered]
\tikzstyle{decision} = [diamond, draw, text centered]
\tikzstyle{data}=[trapezium, draw, text centered, trapezium left angle=60, trapezium right angle=120]
\tikzstyle{connector} = [draw, -{latex[length=2mm]}, thick, black]
\tikzstyle{connector2} = [draw, -{latex[length=2mm]}, thick, green]
\tikzstyle{connector3} = [draw, -{latex[length=2mm]}, thick, blue]
\numberwithin{theorem}{section}
\def\vec#1{{\bf #1}}
\def\widebreve{\mathpalette\wide@breve}
\def\wide@breve#1#2{\sbox\z@{$#1#2$}%
	\mathop{\vbox{\m@th\ialign{##\crcr
				\kern0.08em\brevefill#1{0.8\wd\z@}\crcr\noalign{\nointerlineskip}%
				$\hss#1#2\hss$\crcr}}}\limits}
\def\brevefill#1#2{$\m@th\sbox\tw@{$#1($}%
	\hss\resizebox{#2}{\wd\tw@}{\rotatebox[origin=c]{90}{\upshape(}}\hss$}
\newcommand\footnoteref[1]{\protected@xdef\@thefnmark{\ref{#1}}\@footnotemark}
\newcommand{\TheTitle}
{On Local Minimum Entropy Principle of High-Order Schemes for Relativistic Euler Equations}
\newcommand{\BU}{\mathbf{U}}
\newcommand{\BA}{\mathbf{A}}
\newcommand{\BF}{\mathbf{F}}
\newcommand{\BV}{\mathbf{V}}
\newcommand{\Bn}{{\bm n}}
\newcommand{\cn}{\cdot\Bn_*}
\newcommand{\Be}{\mathbf{e}}
\newcommand{\tp}{\varphi}
\newcommand{\tD}{\widetilde{D}}
\newcommand{\tv}{\tilde{v}}
\newcommand{\BQ}{\mathbf{Q}}
\newcommand{\dBQ}{\widetilde{\BQ}}
\newcommand{\oBv}{\overline\Bv}
\newcommand{\oD}{\overline D}
\newcommand{\ov}{\overline v}
\newcommand{\oBU}{\overline\BU}
\newcommand{\tBU}{\widetilde\BU}
\newcommand{\oBUk}{\oBU^n_K}
\newcommand{\oDk}{\oD^n_K}
\newcommand{\oBvk}{\oBv^n_K}
\newcommand{\oBUj}{\oBU^n_{K_j}}
\newcommand{\oDj}{\oD^n_{K_j}}
\newcommand{\oBvj}{\oBv^n_{K_j}}
\newcommand{\C}{\mu}
\newcommand{\Cs}{\tilde\mu}
\newcommand{\Bm}{{\bm m}}
\newcommand{\Bv}{{\bm v}}
\newcommand{\Bx}{{\bm x}}
\newcommand{\Bxi}{{\bm \xi}}
\newcommand{\Bxij}{\Bxi^{(j)}_K}
\newcommand{\SE}{\mathscr E}
\newcommand{\SF}{\mathscr F}
\newcommand{\CH}{\mathcal{H}}
\newcommand{\CE}{\mathcal{E}}
\newcommand{\CEj}{\big|\CE_{K}^{(j)}\big|}
\newcommand{\BALL}{\mathbb{B}_1(\mathbf{0})}
\newcommand{\diag}{\operatorname{diag}}
\newcommand{\dt}{\Delta t}
\newcommand{\dx}{\Delta x}
\newcommand{\dy}{\Delta y}
\title{
	{\TheTitle}
	\thanks{
		This work is partially supported by Shenzhen Science and Technology Program (Grant No. RCJC20221008092757098) and National Natural Science Foundation of China (Grant No. 12171227).
		The work of S. Cui is also partially supported by NSF of Guangdong Province (Grant No. 2024A1515012329) and National Natural Science Foundation of China (Grant No. 12471382).
	}
}
\author{
	Shumo Cui 
	\thanks{
		Shenzhen International Center for Mathematics and Department of Mathematics, Southern University of Science and Technology, Shenzhen 518055, China.
		({\tt cuism@sustech.edu.cn})
	}
	\and
	Kailiang Wu
	\thanks{
		Corresponding author. Department of Mathematics and Shenzhen International Center for Mathematics, Southern University of Science and Technology, Shenzhen 518055, China.
		({\tt wukl@sustech.edu.cn})
	}
	\and
	Linfeng Xu
	\thanks{
		Department of Mathematics, Southern University of Science and Technology, Shenzhen 518055, P.R.~China. 
		({\tt xulf2022@mail.sustech.edu.cn})
	}
} 
\begin{document}
\large 
	
\maketitle


\begin{abstract}
\normalsize The minimum entropy principle (MEP), first established in [E.~Tadmor, {\em Appl.~Numer.~Math.}, 2:211--219, 1986] for the nonrelativistic Euler system, states that the minimum of the initial specific entropy serves as a lower bound for the specific entropy at all future times. This fundamental principle provides the best-known a priori estimate for entropy in the (nonrelativistic) Euler equations and has been successfully incorporated into the design of stable numerical schemes (see, e.g., [B.~Khobalatte \& B.~Perthame, {\em Math. Comp.}, 62:119-131, 1994; X.~Zhang \& C.-W.~Shu, {\em Numer. Math.}, 121:545-563, 2012]). However, compared to the nonrelativistic case, the understanding of entropy in relativistic Euler equations remains far more limited, due to the complexities of nonlinear entropy and the intricate mathematical structure of the relativistic Euler system.

This paper first establishes the MEP for the relativistic Euler equations with a broad class of general equations of state (EOSs) that satisfy relativistic causality. Furthermore, we address the challenge of preserving the local version of the discovered MEP in high-order numerical schemes. At the continuous level, we find out a family of entropy pairs for the relativistic Euler equations with a general EOS and provide rigorous analysis to prove the strict convexity of entropy under a necessary and sufficient condition. At the numerical level, we develop a rigorous framework for designing provably entropy-preserving high-order schemes that ensure both physical admissibility and the discovered MEP. The relativistic effects, coupled with the abstract and general EOS formulation, introduce significant challenges not encountered in the nonrelativistic case or with the ideal EOS. In particular, entropy is a highly nonlinear and implicit function of the conservative variables, making it particularly difficult to enforce entropy preservation. To address these challenges, we establish a series of auxiliary theories via highly technical inequalities. Another key innovation is the use of geometric quasi-linearization (GQL), which reformulates the nonlinear constraints into equivalent linear ones by introducing additional free parameters. These advancements form the foundation of our entropy-preserving analysis. We propose novel, robust, locally entropy-preserving high-order frameworks. A central challenge is accurately estimating the local minimum of entropy, particularly in the presence of shock waves at unknown locations. To address this, we introduce two new approaches for estimating local lower bounds of specific entropy, which prove effective for both smooth and discontinuous problems. Numerical experiments demonstrate that our entropy-preserving methods maintain high-order accuracy while effectively suppressing spurious oscillations, outperforming existing local entropy minimum estimation techniques in the literature. Moreover, our approach is not limited to the relativistic Euler equations but can also be applied to other hydrodynamic models that admit an MEP.

\end{abstract}
	
\begin{keywords}
\normalsize  
	minimum entropy principle, 
	relativistic Euler equations,
	general equation of state,
	discontinuous Galerkin, 
	entropy-preserving, 
	priori estimations of entropy bounds
\end{keywords}

\begin{AMS}
\normalsize  
	65M60, 65M08, 65M12, 76Y05, 35L65   
\end{AMS}

\section{Introduction}

The principle of entropy non-decrease is a fundamental law of nature. For hyperbolic conservation laws, a key concept is the entropy condition. Due to the finite speed of wave propagation, solutions to nonlinear hyperbolic equations can develop discontinuities, such as shock waves, even from smooth initial data. This necessitates the use of weak solutions, which are typically non-unique. The entropy condition provides an additional criterion to single out the physically relevant solution. It plays a crucial role in ensuring both the uniqueness and stability of weak solutions, making it indispensable for guaranteeing that the solutions are both mathematically well-posed and physically meaningful.

In the 1980s, researchers began to recognize the importance of the entropy principle in numerical computations and introduced entropy-stable schemes that satisfy discrete entropy conditions  \cite{harten1976finite,crandall1980monotone,osher1984riemann,osher1988convergence,tadmor1987numerical,jiang1994cell,tadmor2003entropy,berthon2015entropy,berthon2016fully,berthon2023artificial}. Over the past few decades, significant advancements have been made in the development of high-order entropy-stable schemes \cite{lefloch2002fully, fjordholm2012arbitrarily, fjordholm2013eno, chen2017entropy, zhao2022strictly, duan2021, berthon2014entropy, hiltebrand2014entropy}. However, in the context of fluid dynamics, the study of entropy stability has typically been limited to a single special entropy pair derived from thermodynamics. Furthermore, most of the high-order schemes that can be proven to be entropy-stable are semi-discrete, meaning they are spatially discrete but continuous in time. Moreover, the proofs rely on the assumption that they preserve the positivity of density $\rho$ and pressure $p$. Developing fully discrete high-order schemes that can be rigorously proven to be entropy-stable remains an ongoing challenge.

Simultaneously, attention has also been paid to another important entropy law: the minimum entropy principle (MEP), originally discovered by Eitan Tadmor for the (non-relativistic) gas dynamics equations \cite{TADMOR1986211}:
\begin{equation} \label{LMEP}
	S({\bm x},t) 
	\geq
    \operatorname*{ess~inf}_{\|{\bm z} - {\bm x} \| \leq \tau  v_{\max} } S({\bm z},t-\tau) =: S^{\rm L}_{\min}({\bm x},t,\tau),
\end{equation}
where $\tau \in [0,t]$, $v_{\max}$ denotes the maximum propagation speed, and $S$ is the specific entropy, defined as 
	$S({\bf U}) = \log \big( {p}{\rho^{-\Gamma}} \big)$ with $\Gamma$ being the adiabatic index. The MEP was also shown by Guermond and
	Popov \cite{Guermond2014} via viscous regularization of the non-relativistic Euler equations. 
This principle asserts that the spatial minimum of the specific entropy is a non-decreasing function of time, meaning that the minimum value of specific entropy at the initial time serves as a lower bound for all future times: 
\begin{equation} \label{GMEP}
    S({\bm x},t) 
	\geq
    \operatorname*{ess~inf}_{\bm x}S({\bm x},0)=:S^{\rm G}_{\min} 
    \qquad
	\forall ~ t > 0.
\end{equation}
The MEP is recognized as the best known pointwise (a priori) entropy estimate for gas dynamics equations \cite{zhang2012minimum}. It is crucial to develop numerical methods that preserve the MEP, referred to as {\it{entropy-preserving}} schemes in this paper.

The analysis and design of entropy-preserving schemes have garnered considerable attention, as preserving the MEP enhances numerical stability and reduces spurious numerical oscillations \cite{TADMOR1986211, khobalatte1994maximum, zhang2012minimum, JIANG2018}. 
The preservation of the MEP can be viewed as a key property that characterizes the nonlinear stability of a numerical scheme.  
 Tadmor \cite{TADMOR1986211} demonstrated that Godunov-type and Lax–Friedrichs schemes preserve the MEP for (non-relativistic) gas dynamics equations. Coquel and Perthame \cite{coquel1998relaxation} studied first-order relaxation schemes satisfying the MEP for non-relativistic Euler equations. 
 As noted by Perthame and Shu \cite{perthame1996positivity}, {\em the MEP is ``extremely difficult to preserve for second- and higher-order schemes."} Khobalatte and Perthame \cite{khobalatte1994maximum} developed second-order entropy-preserving kinetic schemes, while Zhang and Shu \cite{zhang2012minimum} proposed a novel approach to enforcing the MEP in high-order finite volume and discontinuous Galerkin (DG) schemes. Guermond et al.~\cite{guermond2016invariant, guermond2018second, guermond2019invariant} studied continuous finite element methods that preserve the MEP. Jiang and Liu \cite{JIANG2018} introduced a new limiter designed to preserve the MEP for DG schemes. Gouasmi et al.~\cite{gouasmi2020minimum} proved the MEP for compressible multicomponent Euler equations. 
All of these works focused on numerically preserving the {\bf global} MEP \eqref{GMEP}, rather than a discrete version of the {\bf local} MEP \eqref{LMEP}. For convenience, we refer to the numerical schemes that preserve the local MEP as {\em locally entropy-preserving} schemes. Lv and Ihme \cite{LV2015715} observed the notable advantages of enforcing the local MEP over the global MEP, particularly for controlling spurious oscillations more effectively. They also proposed a high-order DG scheme for the (non-relativistic) Euler equations that preserves the local MEP \cite{LV2015715}. The key challenge in designing locally entropy-preserving schemes is providing a suitable estimate of the local minimum of the specific entropy. More recently, Ching, Johnson, and Kercher \cite{ching2024positivity, ching2024positivity2,ching2025positivity} developed locally entropy-preserving schemes for chemically reacting compressible Euler and Navier--Stokes equations. The local MEP of approximate Riemann solvers was used to obtain the first-order entropy-stable schemes for non-relativistic Euler equations \cite{berthon2012local}. 
It is also important to note that the specific entropy $S = \log ( {p}{\rho^{-\Gamma}} )$ is well-defined only when the density $\rho$ and pressure $p$ are positive. Therefore, to numerically preserve the MEP \cite{zhang2012minimum}, it is essential to first ensure the positivity of $\rho$ and $p$ throughout the computation. Numerical methods designed to preserve such positivity have been extensively studied; see, e.g., \cite{zhang2010, zhang2010b,  Xu2014,WuTang2015,  QinShu2016, WuTang2017ApJS,guermond2018second}.

The above efforts on the MEP and entropy-preserving schemes have mainly focused on (non-relativistic) hydrodynamic equations. However, in many important astrophysical and high-energy physical phenomena, the flow of matter and energy occurs at speeds close to the speed of light or in strong gravitational fields, necessitating the incorporation of relativistic effects. In such cases, the governing equations take the relativistic form of the Euler system:
\begin{equation}\label{eq:covariant}
	\begin{cases}
		\partial_{\alpha}(\rho u^{\alpha}) = 0, \\ 
		\partial_{\alpha} T^{\alpha\beta} = 0,
	\end{cases}
\end{equation}
where $\rho$ denotes the rest-mass density, $u^{\alpha}$ is the four-velocity (with Einstein summation convention over $\alpha$), and $T^{\alpha\beta}$ is the stress-energy tensor. For an ideal fluid, the stress-energy tensor is expressed as
$T^{\alpha\beta} = \rho h u^{\alpha} u^{\beta} + p g^{\alpha\beta},$ 
where $p$ is the pressure, $h = 1 + e + \frac{p}{\rho}$ is the specific enthalpy, and $e$ is the specific internal energy. We adopt the geometrized unit system where the speed of light $c = 1$. 
In special relativity, the spacetime metric $(g^{\alpha\beta})_{4 \times 4}$ is the Minkowski tensor $\text{diag}\{-1, 1, 1, 1\}$. In the $d$-dimensional case, the special relativistic Euler system can then be rewritten in the conservative form:
\begin{equation}\label{eq:RHD3D}
	\frac{\partial \vec{U}}{\partial t} + \sum_{i=1}^{d} \frac{\partial \vec{F}_i(\vec{U})}{\partial x_i} = \mathbf{0},
\end{equation}
where the conservative vector $\textbf{U}$ and flux function $\textbf{F}_i$ are defined as
\begin{align} 
	&\mathbf{U} = 
	\left( D, {\bm m}^\top, E \right)^\top 
	= 
	\left( \rho \gamma, \rho h \gamma^2 {\bm v}^\top, \rho h \gamma^2 - p \right)^\top, 
	\label{eq:86} \\ 
	&\mathbf{F}_i = 
	\left( D v_i, v_i {\bm m}^\top + p {\bf e}_i^\top, m_i \right)^\top 
	= 
	\left( \rho \gamma v_i, \rho h \gamma^2 v_i {\bm v}^\top + p {\bf e}_i^\top, \rho h \gamma^2 v_i \right)^\top.
	\label{eq:93} 
\end{align}
Here, $D$ is the mass density, ${\bm m}$ is the momentum density, and $E$ is the energy density, referred to as the {\em conservative quantities}. The rest-mass density $\rho$, velocity ${\bm v} = (v_1, v_2, \dots, v_d)^\top$, and pressure $p$ are often termed the {\em primitive quantities}. Additionally, the Lorentz factor 
$\gamma = (1 - |{\bm v}|_2^2)^{-\frac12},$   
and ${\bf e}_i$ represents the $i$-th column of the $d \times d$ identity matrix.


However, in the relativistic case, the understanding of the entropy principle remains rather limited. The primary challenge arises from the strong nonlinearity introduced by relativistic effects, which complicates the relationship between entropy and conserved quantities to the point of making it impossible to explicitly characterize. Specifically, the function $S(\mathbf{U})$ is strongly nonlinear and can only be defined implicitly, requiring the solution of a complex nonlinear equation. This represents a notable departure from the non-relativistic case, posing substantial difficulty to the study of the MEP and the development of entropy-preserving schemes.
\begin{center} {\em Does a similar MEP hold in the relativistic case?} \end{center}
Until 2021, this question remained unanswered. In \cite{WuMEP2021}, the MEP was first proven for the special relativistic Euler equations \eqref{eq:RHD3D} coupled with a simple ideal equation of state (EOS), where the numerical preservation of the global MEP was also explored. However, the ideal EOS is often a poor approximation for many relativistic flows due to its incompatibility with relativistic kinetic theory \cite{taub1948relativistic}. Therefore, it is important to explore the MEP for the relativistic Euler equations with more physically accurate and practical EOSs and locally entropy-preserving schemes that can control spurious oscillations.

The aim of this paper is to explore the MEP for the relativistic Euler equations with a general EOS, which includes a broad range of widely used EOSs. Additionally, we address, for the first time, the challenge of preserving the local version of the discovered MEP in high-order schemes for relativistic hydrodynamics.  
The contributions and innovations of this paper are outlined as follows:
\begin{itemize}
	
	\item We find a family of entropy pairs for the relativistic Euler equations with a general EOS. Rigorous analysis is presented to prove that the entropy is strictly convex under a sufficient and necessary condition. Based on these entropy pairs, we theoretically establish both global and local MEPs for the relativistic Euler system with a general EOS. Unlike the non-relativistic case, there are no explicit formulas for either the flux ${\bf F}_i$ or the primitive quantities (and the specific entropy $S$) in terms of ${\bf U}$, making our entropy analysis highly nontrivial. Furthermore, the general and abstract form of the EOS introduces greater challenges compared to the case of a simple EOS, such as the ideal EOS.
	
	\item We present a rigorous framework for developing provably entropy-preserving high-order schemes for the relativistic Euler equations. Relativistic effects present essential difficulties in this study that are not encountered in the non-relativistic case or with the simple ideal EOS. Specifically, in special relativity, the specific entropy $S$ is a highly nonlinear implicit function of the conservative variables $\mathbf{U}$, which poses significant challenges for proving entropy preservation. To overcome these difficulties, we establish a series of auxiliary theories in Section \ref{sec:3} using technical estimates. We also apply the geometric quasi-linearization (GQL) technique \cite{wu2023geometric} to convert the otherwise unmanageable nonlinear constraints into linear ones by introducing extra free parameters. These methodologies lay the foundation for our entropy-preserving analysis.

	\item We propose novel, robust, locally entropy-preserving high-order frameworks. The critical challenge is properly estimating the local minimum of entropy, which is particularly difficult due to the potential occurrence of shock waves at unknown locations. To address this, we propose two new methods for estimating local lower bounds of the specific entropy. Our methods prove effective for both smooth and discontinuous solutions. Numerical experiments show that these approaches maintain high-order accuracy while effectively controlling spurious oscillations, outperforming existing local entropy minimum estimation techniques, such as those in \cite{LV2015715, ching2024positivity, ching2024positivity2,ching2025positivity}. Notably, our methods are not limited to relativistic Euler equations but are versatile enough to be applied to other hydrodynamic models that admit the MEP. Furthermore, our approach offers fresh insights into suppressing spurious oscillations through local MEP preservation.

	\item We implement our locally entropy-preserving high-order DG schemes coupled with high-order strong-stability-preserving multi-step (SSP MS) discretization. Several one-dimensional (1D) and two-dimensional (2D) numerical experiments, using various EOSs, demonstrate the importance of preserving the discovered MEP in numerical simulations and validate the accuracy and effectiveness of the proposed schemes. We also conduct comparisons with existing entropy minimum estimation techniques to highlight the advantages of our proposed methods.
\end{itemize}

This paper is organized as follows: Section \ref{sec:2} establishes both local and global MEP for the relativistic Euler equations with a general EOS. Section \ref{sec:3} develops the auxiliary theories for the rigorous entropy-preserving analysis. Sections \ref{sec:1Dscheme} and \ref{sec:2Dschemes} present provably entropy-preserving high-order finite volume and DG schemes on 1D meshes and multidimensional unstructured meshes, respectively. Section \ref{sec:6} introduces two novel methods for estimating local bounds of the specific entropy. Finally, Section \ref{section:5} provides several numerical examples to demonstrate the importance of preserving the discovered MEP and showcases the effectiveness and robustness of the proposed high-order entropy-preserving schemes.

\section{Minimum Entropy Principle} \label{sec:2}

This section proves the MEP for the relativistic Euler equations with a general EOS. In subsequent \Cref{sec:3,sec:1Dscheme,sec:2Dschemes,sec:6}, we will explore how to incorporate the discovered MEP into the design of robust high-order numerical schemes.

\subsection{General Equation of State}

To close the relativistic Euler system \eqref{eq:RHD3D}, an EOS is required, which relates the enthalpy $h$ to the pressure $p$ and rest-mass density $\rho$. 
The relativistic EOS for a single-component perfect gas is given by
\begin{equation}\label{eq:PEOS}
	h(\theta) = \frac{ K_3(1/\theta) }{K_2(1/\theta)},
\end{equation}
where $\theta := p/\rho$, and $K_2$ and $K_3$ are the modified Bessel functions of the second kind, of orders two and three, respectively. Due to the presence of these complicated Bessel functions, the EOS \eqref{eq:PEOS} is computationally expensive and, therefore, rarely used in practice.

As alternatives, several approximate EOSs have been proposed. Ryu, Chattopadhyay, and Choi \cite{ryu2006equation} introduced the following RC-EOS\footnote{\label{fn:EOS}These abbreviations follow the conventions in \cite{mignone2005piecewise,ryu2006equation}.}:
\begin{equation}\label{hEOS1}
	h(\theta) = \frac{2(6\theta^2 + 4\theta + 1)}{3\theta + 2}.
\end{equation}
Sokolov, Zhang, and Sakai \cite{sokolov2001simple} proposed the IP-EOS\footnoteref{fn:EOS}:
\begin{equation}\label{hEOS2}
	h(\theta) = 2\theta + \sqrt{1 + 4\theta^2}.
\end{equation}
Mathews \cite{mathews1971hydromagnetic} suggested the TM-EOS\footnoteref{fn:EOS}:
\begin{equation}\label{hEOS3}
	h(\theta) = \frac{5}{2}\theta + \sqrt{1 + \frac{9}{4}\theta^2},
\end{equation}
which was later used in numerical simulations by Mignone, Plewa, and Bodo \cite{mignone2005piecewise}. In addition to the above approximate relativistic EOSs, 
another special EOS is the ideal EOS:
\begin{equation}\label{ID-EOS}
	h(\theta) = 1 + \frac{\Gamma}{\Gamma -1} \theta
	\qquad \textrm{with} \qquad
	\Gamma \in (1,2],
\end{equation}
which is commonly used in non-relativistic fluid dynamics and has also been borrowed to the study of relativistic flows. However, for most relativistic hydrodynamic problems, the ideal EOS \eqref{ID-EOS} is a poor approximation due to its inconsistency with relativistic kinetic theory \cite{taub1948relativistic}. Moreover, when the adiabatic index $\Gamma > 2$, the ideal EOS \eqref{ID-EOS} can result in superluminal wave propagation, violating the fundamental principles (causality) of special relativity.

To ensure the relativistic causality, specifically that the local sound speed $c_s < 1$, it has been shown in \cite{WuTang2017ApJS,xu2024high} that the function $h(\theta)$ must satisfy the following inequality:
\begin{equation}\label{eq:122}
	h'(\theta) > \frac{h(\theta)}{h(\theta) - \theta} \qquad \forall \, \theta \in (0, + \infty),
\end{equation}
or equivalently, the function $e(\theta) := h(\theta) - \theta - 1$ must satisfy:
\begin{equation}\label{eq:206}
	e'(\theta) > \frac{\theta}{e(\theta) + 1} \qquad \forall \, \theta \in (0, + \infty).
\end{equation}
In the remainder of this paper, we will consider a general class of EOSs of the form:
\begin{equation}\label{eq:175}
	h = h(\theta) = e(\theta) + \theta + 1,
\end{equation}
where the functions $h(\theta)$ and $e(\theta)$ satisfy the conditions \eqref{eq:122}--\eqref{eq:206}. Following \cite{xu2024high}, we refer to such general EOSs as {\em Synge-type} EOSs. It can be verified that all the aforementioned EOSs \eqref{eq:PEOS}--\eqref{ID-EOS} belong to this class.

\subsection{Convex Entropy Pairs}

Given any smooth function $\mathcal{H}: \mathbb{R}_+ \mapsto \mathbb{R}$, we define 
\begin{equation}\label{eq:409}
	\SE(\mathbf{U}) := - D \mathcal{H}(S(\mathbf{U})), 
	\qquad
	\SF_i(\mathbf{U}) := - D v_i \mathcal{H}(S(\mathbf{U})),
	\quad i = 1, \dots, d,
\end{equation}
where $S(\mathbf{U})$ is defined as
\begin{equation}\label{eq:405}
	S(\mathbf{U}) := -\ln{\rho} + \int_1^{\theta} \frac{e'(\xi)}{\xi} \, \mathrm{d}\xi.
\end{equation}
For example, for the ideal EOS \eqref{ID-EOS}, RC-EOS \eqref{hEOS1}, IP-EOS \eqref{hEOS2}, TM-EOS \eqref{hEOS3}, the specific forms of the corresponding $S(\mathbf{U})$ are respectively given as follows: 
		\begin{align*} 
	S_{\rm ID}&=-\ln{\rho}+\frac{1}{\Gamma-1}\int_1^\theta\frac{1}{\xi}\mathrm{d}\xi=-\ln{\rho}+\frac{1}{\Gamma-1}\ln{\theta}=\frac{1}{\Gamma-1}\ln{\frac{p}{\rho^\Gamma}}.
		\\
	S_{\rm RC}&=-\ln{\rho}+\int_1^\theta \left(\frac{3}{2\xi}+\frac{9}{2(3\xi+2)}+\frac{9}{(3\xi+2)^2}\right)\mathrm{d}\xi\\
	&=-\ln{\rho}+\frac{3}{2}\ln{\theta}+\frac{3}{2}\ln{(3\theta+2)}-\frac{3}{3\theta+2}+\frac{3}{5}-\frac{3}{2}\ln{5}.
		\\
	S_{\rm IP}&=-\ln{\rho}+\int_1^\theta \left(\frac{1}{\xi}+\frac{4}{\sqrt{1+4\xi^2}}\right)\mathrm{d}\xi\\
	&=-\ln{\rho}+\ln{\theta}+2\ln{\left(2\theta+\sqrt{1+4\theta^2}\right)}-2\ln{(2+\sqrt{5})}.
		\\
	{S}_{\rm TM}&=-\ln{\rho}+\int_1^\theta \left(\frac{3}{2\xi}+\frac{\frac{9}{4}\xi}{\sqrt{1+\frac{9}{4}\xi^2}}\right)\mathrm{d}\xi
	\\
	&=-\ln{\rho}+\frac{3}{2}\ln{\theta}+\frac{3}{2}\ln{\left(\frac{3}{2}\theta+ \sqrt{1+\frac{9}{4}\theta^2}\right)}-\frac{3}{2}\ln{\left(\frac{3}{2}+\sqrt{\frac{13}{4}}\right)}.
	\end{align*}

As proven below, the pair $(\SE, \SF)$ defined in \eqref{eq:409} forms an entropy pair for the relativistic Euler system if  the function $\mathcal{H}(\cdot)$ satisfies a sufficient and necessary condition. By appropriately choosing $\mathcal{H}$, we will construct a sequence of entropy pairs, which lays the foundation for establishing the MEP for the relativistic Euler system \eqref{eq:RHD3D}.


\begin{lemma}\label{lem:convexentropy}
	For a general EOS satisfying condition \eqref{eq:122} and $h(\theta) - \theta h'(\theta) \neq 0$, the function $\mathcal{E}(\mathbf{U})$ defined in \eqref{eq:409} is strictly convex if and only if the following conditions hold:
	\begin{equation} \label{eq:410}
		\mathcal{H}'(S(\mathbf{U})) > 0, \quad 
		\mathcal{H}'(S(\mathbf{U})) - (1 + e'(\theta)) \mathcal{H}''(S(\mathbf{U})) > 0 \quad \forall\ \mathbf{U} \in \mathcal{G},
	\end{equation}
	where $\mathcal{G}$ denotes the physically admissible state set defined as
	\begin{equation}\label{eq:DefG}
		\mathcal{G} = \left\{ \mathbf{U} = ( D, {\bm m}^\top, E )^\top : \rho(\mathbf{U}) > 0, p(\mathbf{U}) > 0, | {\bm v}(\mathbf{U}) | < 1 \right\}.
	\end{equation}
\end{lemma}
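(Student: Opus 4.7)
My plan is to characterize strict convexity of $\mathcal{E}(\mathbf{U})=-D\mathcal{H}(S(\mathbf{U}))$ by computing its Hessian after a change of variables from the conservative vector $\mathbf{U}$ to the primitive triple $\mathbf{V}=(\rho,\mathbf{v},\theta)^\top$. First I would verify that the map $\mathbf{V}\mapsto\mathbf{U}$ given by \eqref{eq:86} is a diffeomorphism from the open set $\{\rho>0,\ |\mathbf{v}|<1,\ \theta>0\}$ onto $\mathcal{G}$. Evaluating $\det(\partial\mathbf{U}/\partial\mathbf{V})$ should yield a nonzero factor proportional to $h(\theta)-\theta h'(\theta)$, so the non-degeneracy assumption in the lemma is precisely what guarantees that the change of variables is valid; this is where that hypothesis enters. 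Under this diffeomorphism, positive definiteness of $\nabla^2_\mathbf{U}\mathcal{E}$ is equivalent to positive definiteness of a symmetric matrix $M=M(\mathbf{V})$ obtained by pulling back $\nabla^2_\mathbf{U}\mathcal{E}$ to $\mathbf{V}$-coordinates.

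Next I would compute $M$ using the standard identity, valid along any linear path in $\mathbf{U}$,
\[
\mathbf{W}^\top\nabla^2_\mathbf{U}\mathcal{E}\,\mathbf{W}=\dot\mathbf{V}^\top\!\Big[\nabla^2_\mathbf{V}\mathcal{E}_v-\sum_{k}(\nabla_\mathbf{U}\mathcal{E})_k\,\nabla^2_\mathbf{V}U_k\Big]\!\dot\mathbf{V},\qquad\dot\mathbf{V}=(\partial\mathbf{V}/\partial\mathbf{U})\mathbf{W},
\]
feeding in $\nabla_\mathbf{V}S=(-1/\rho,\mathbf{0}^\top,e'(\theta)/\theta)^\top$ obtained directly from \eqref{eq:405} together with the first and second partials of $(D,\mathbf{m},E)$ in $\mathbf{V}$. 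Using $1+e'(\theta)=h'(\theta)$ throughout to recombine terms, the thermodynamic $(\rho,\theta)$ block of $M$ should factorize so that one principal minor carries the factor $\mathcal{H}'(S)$ and the Schur complement carries $\mathcal{H}'(S)-h'(\theta)\mathcal{H}''(S)$, while the kinematic $\mathbf{v}$-block should reduce to a positive-definite matrix multiplied by $\mathcal{H}'(S)$, with positivity of the scalar multiplier secured by the causality inequality \eqref{eq:122}.

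From this factorization the principal-minor test yields immediately that positive definiteness of $M$ is equivalent to $\mathcal{H}'(S)>0$ together with $\mathcal{H}'(S)-h'(\theta)\mathcal{H}''(S)>0$, proving the sufficient direction. Necessity is obtained by testing the quadratic form along distinguished $\dot\mathbf{V}$: a pure $\rho$-perturbation forces $\mathcal{H}'>0$, while a $(\rho,\theta)$-perturbation chosen to annihilate the $\rho\rho$-contribution isolates the Schur complement and forces $\mathcal{H}'-h'\mathcal{H}''>0$. The main obstacle I anticipate is purely algebraic rather than conceptual: the relativistic relations entangle $\gamma$, $\rho$, $h(\theta)$, and $\mathbf{v}$ in a highly nonlinear way, so the correction terms $\sum_{k}(\nabla_\mathbf{U}\mathcal{E})_k\,\nabla^2_\mathbf{V}U_k$ generate many cross terms that must be delicately balanced against $\nabla^2_\mathbf{V}\mathcal{E}_v$ before the clean factorization becomes visible. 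Working at the level of an abstract Synge-type EOS, rather than with a closed-form expression for $S(\theta)$, further complicates the bookkeeping and forces every simplification to go through the identities $h'(\theta)=1+e'(\theta)$ and the causality bound \eqref{eq:122}, which are the only structural ingredients available.
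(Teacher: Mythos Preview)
Your overall strategy---pull back the Hessian of $\mathcal{E}$ to primitive variables and characterize positive definiteness there---is exactly the paper's route; the paper too expresses $S_{\mathbf{U}}$ and $S_{\mathbf{UU}}$ via the chain rule through $\partial\mathbf{V}/\partial\mathbf{U}$ and then performs congruence transforms. But the clean factorization you anticipate (a positive-definite $\mathbf{v}$-block times $\mathcal{H}'$, decoupled from a $(\rho,\theta)$-block whose principal minor and Schur complement carry the two conditions) is the \emph{non-relativistic} picture and does not survive the relativistic coupling through $\gamma(\mathbf{v})$. In the paper's computation, after the analogous congruence (by a carefully chosen matrix $\mathbf{P}_1$), the Hessian takes the form
\[
\mathcal{H}'(S)\cdot A \;+\; \big(\mathcal{H}'(S)-(1+e'(\theta))\mathcal{H}''(S)\big)\cdot b\,b^\top,
\]
where $A$ is only positive \emph{semi}-definite with a one-dimensional kernel, and $b\,b^\top$ is rank one. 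Neither piece alone is definite, and neither aligns with a thermodynamic/kinematic split: the kernel of $A$ mixes velocity and energy components (its null direction is proportional to $(\mathbf{v}^\top,1)^\top$ in the transformed coordinates). Sufficiency then hinges on a kernel argument---checking that $b$ is not annihilated on $\ker A$, so that the rank-one term fills in precisely where $A$ degenerates---rather than on a Schur-complement factorization of a block-diagonal matrix.

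This is not fatal to your plan, but it means the ``clean factorization'' you hope will emerge after balancing the correction terms will not appear; you will instead arrive at a semi-definite-plus-rank-one structure, and the heart of the argument becomes identifying $\ker A$ explicitly and verifying that $b$ pairs nontrivially with it. Your necessity test vectors should likewise be chosen along $\ker A$ (to isolate the coefficient $\mathcal{H}'-(1+e')\mathcal{H}''$) and in $b^\perp$ (to isolate $\mathcal{H}'$), rather than along pure $\rho$- or $(\rho,\theta)$-directions in primitive space.
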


\begin{proof}
	The strict convexity of the function $\mathcal{E}(\mathbf{U})$ is equivalent to the positive definiteness of its Hessian matrix:
	\begin{align}\label{eta_UU}
		\mathcal{E}_{\BU\BU} 
		&= 
		-\CH'(S) 
		(
		\Be_1 S_{\BU}^\top + S_{\BU}\Be_1^\top + DS_{\BU\BU}
		)
		-D \CH''(S) S_{\BU} S_{\BU}^\top
		\\ \nonumber
		&=-\CH'(S)\ \BA_1+\frac{D}{1+e'(\theta)}
		\left(
		\CH'(S)-(1+e'(\theta))\CH''(S)
		\right)
		S_{\BU}S_{\BU}^\top, 
	\end{align}
	where 
	$$
	\BA_1
	:=
	\Be_1 S_{\BU}^\top
	+
	S_{\BU}\Be_1^\top
	+
	D S_{\BU\BU}
	+
	\frac{D}{1+e'(\theta)} S_{\BU} S_{\BU}^\top.
	$$
	Since $S({\bf U})$ does not have explicit formulas,  to express $S_{\mathbf{U}}^\top$ and $S_{\mathbf{UU}}$, we apply the chain rule
	\begin{equation*}
		S_{\BU}^\top 
		= 
		\left(\frac{\partial S}{\partial \BV}\right)^\top
		\frac{\partial\BV}{\partial \BU},
		\qquad
		S_{\BU\BU}
		=
		\frac{\partial^2 S}{\partial \BU^2} = \frac{\partial^2 S}{\partial \BV\partial\BU}\frac{\partial\BV}{\partial \BU}=S_{\mathbf{UV}}\frac{\partial\BV}{\partial \BU},
	\end{equation*}
	where $\mathbf{V} := ( \rho, \bm{v}^\top, p )^\top$ represents the vector of primitive variables.
	Using results from \cite{xu2024high}, the matrix $S_{\mathbf{UV}}$ is explicitly given by
	\begin{equation*}
		S_{\mathbf{UV}}=\begin{pmatrix}
			\frac{1+e'(\theta)}{\rho^2\gamma} & \frac{h\gamma}{\rho\theta}\Bv^\top & \frac{h-\theta(1+e'(\theta))}{\rho^2\theta^2\gamma} \\
			\mathbf{0}_d & -\frac{1}{\rho\theta}\mathbf{I}_d & \frac{1}{\rho^2\theta^2}\Bv\\
			0 & \mathbf{0}_d^\top & -\frac{1}{\rho^2\theta^2}
		\end{pmatrix},
	\end{equation*}
	where $\mathbf{I}_d$ denotes the $d \times d$ identity matrix and $\mathbf{0}_{d}$ is the $d \times 1$ zero vector. 
	Substituting these expressions, the matrix $\mathbf{A}_1$ becomes
	\begin{align*}
		\BA_1 &=\mathbf{e}_1S_{\BU}^\top+S_{\BU}\mathbf{e}_1^\top+DS_{\BU\BV}\frac{\partial \BV}{\partial \BU}+\frac{D}{1+e'(\theta)}S_{\BU}S_{\BU}^\top
		\\
		&= \frac{-1}{\rho h\theta^2e'(\theta) \left(1+e'(\theta)\right)\ \left(1 - c_s^2|\Bv|^2\right)}
		\begin{pmatrix}
			a_1 & a_2\Bv^\top & a_3 \\ 
			a_2\Bv & \BA_2 & a_4\Bv \\
			a_3 & a_4\Bv^\top & a_5 
		\end{pmatrix},
	\end{align*}
	where   the sound speed 
	\begin{equation*}
		c_s =  \sqrt{\frac{\theta(1+e'(\theta))}{h(\theta) \,e'(\theta)}}
	\end{equation*}
	satisfies $c_s \in (0,1)$ under condition \eqref{eq:122}. 
	Additionally, 
	\begin{equation} \label{Delta_theta}
		\begin{aligned} 
			a_1 &:= h\gamma^{-1} \Delta_{\theta}^2, 
			&\quad
			a_2 &:= \Big(h+\theta(1+e'(\theta))\Big)\Delta_{\theta},
			\\
			a_3 &:= -\Big(h+\theta(1+e'(\theta))|\Bv|^2\Big)\Delta_{\theta},
			&\quad 
			a_4 &:= -\gamma\Big(h+\theta(1+e'(\theta))|\Bv|^2+\theta(1+e'(\theta))^2\Big),
			\\
			a_5 &:= \gamma\Big(h+\theta(1+e'(\theta))(2+e'(\theta))|\Bv|^2\Big),
			& \quad
			\Delta_{\theta} &:= h-\theta(1+e'(\theta)) = h(\theta)-\theta h'(\theta ),
		\end{aligned}
	\end{equation}
	and 
	\begin{equation*}
		\begin{aligned}
			\BA_2
			:=
			& \frac{\theta(1+e'(\theta))}{\gamma}
			\left(
			e'(\theta)-\frac{\theta|\Bv|^2(1+e'(\theta))}{h}
			\right)
			\mathbf{I}_d \\
			& +
			\left(
			\frac{\theta^2(1+e'(\theta))^2}{h\gamma}+\gamma\Big(h+\theta(1+e'(\theta))(2+e'(\theta))\Big)
			\right)
			\Bv\Bv^\top.
		\end{aligned}
	\end{equation*}
	Noting $a_1>0$, we  
	define an invertible matrix
	\begin{equation*}
		{\bf P}_1:= 
		\begin{pmatrix}
			1 & \mathbf{0}_d^\top & 0 \\
			-\frac{a_2}{a_1}\Bv & \mathbf{I}_d & \mathbf{0}_d \\
			-\frac{a_3}{a_1} & \mathbf{0}_d^\top & 1 
		\end{pmatrix}.
	\end{equation*}
	Using this transformation, we have
	\begin{equation} \label{P1A1P1}
		{\bf P}_1 \BA_1 {\bf P}_1^\top 
		=  
		-\frac{1}
		{
			\rho h\theta^2e'(\theta) \left(1+e'(\theta)\right)
			\left(1 - c_s^2|\Bv|^2\right)
		}
		\begin{pmatrix}
			a_1 & \mathbf{0}_{d+1}^\top \\
			\mathbf{0}_{d+1} & a_6\BA_3
		\end{pmatrix}
	\end{equation}
	with
	\begin{equation*}
		a_6 := \gamma\theta e'(\theta)(1+e'(\theta))(1-c_s^2|\Bv|^2)>0\quad {\rm and} \quad
		\BA_3:=  \begin{pmatrix}
			\left(1-|\Bv|^2\right)\mathbf{I}_d+\Bv\Bv^\top & -\Bv \\
			-\Bv^\top & |\Bv|^2 
		\end{pmatrix}.
	\end{equation*}
	Note that 
	\begin{equation} \label{P1SU}
		\mathbf{P}_1S_{\BU} = 
		\begin{pmatrix}
			-\frac{h\gamma^{-1}}{\rho\theta}, & 
			\frac{2(1+e'(\theta))\Bv^\top}{\rho\Delta_{\theta}}, &
			-\frac{(1+e'(\theta))(1+|\Bv|^2)}{\rho\Delta_{\theta}}
		\end{pmatrix}^\top=:\bm{b}_1.
	\end{equation}
	Combining \eqref{eta_UU}, \eqref{P1A1P1}, and \eqref{P1SU}, we obtain
	\begin{align}
		\nonumber
		\mathbf{P}_1\mathcal{E}_{\BU\BU}\mathbf{P}_1^\top 
		&= \mathbf{P}_1
		\left(
		-\CH'(S)\BA_1+\frac{D}{1+e'(\theta)}
		\left(
		\CH'(S)-(1+e'(\theta))\CH''(S)
		\right)
		S_{\BU}S_{\BU}^\top
		\right)
		\mathbf{P}_1^\top
		\\ \nonumber
		&=
		-\CH'(S) \mathbf{P}_1\BA_1\mathbf{P}_1^\top+\frac{D}{1+e'(\theta)}
		\left(
		\CH'(S)-(1+e'(\theta))\CH''(S)
		\right)
		\bm{b}_1\bm{b}_1^\top
		\\ \label{P1EUUP1}
		&=
		a_7\CH'(S)\BA_4+a_8
		\left(
		\CH'(S)-(1+e'(\theta))\CH''(S)
		\right)
		\bm{b}_1\bm{b}_1^\top,
	\end{align}
	where $a_7:= (\rho h\theta^2e'(\theta) \left(1+e'(\theta)\right)\ \left(1 - c_s^2|\Bv|^2\right))^{-1}>0$, $a_8 := \frac{D}{1+e'(\theta)}>0$, and
	\begin{equation} \label{A_4}
		\BA_4:=
		\begin{pmatrix}
			a_1 & \mathbf{0}_{d+1}^\top \\
			\mathbf{0}_{d+1} & a_6\BA_3
		\end{pmatrix}.
	\end{equation}
	We proceed to show that $\BA_3$ is positive semi-definite. Consider the transformation matrix
	\begin{equation*}
		{\bf P}_2:= 
		\begin{pmatrix}
			\mathbf{I}_d & \frac{\Bv}{|\Bv|^2} \\
			\mathbf{0}_d^\top & 1 
		\end{pmatrix}.
	\end{equation*}
	Applying ${\bf P}_2$ to $\BA_3$, we obtain
	\begin{equation*}
		\mathbf{P}_2\BA_3  \mathbf{P}_2^\top 
		= 
		\begin{pmatrix}
			\left(1-|\Bv|^2\right)\mathbf{I}_d
			+
			\left(1-\frac{1}{|\Bv|^2}\right)\Bv\Bv^\top 
			& \mathbf{0}_d
			\\
			\mathbf{0}_d^\top 
			& |\Bv|^2
		\end{pmatrix}.
	\end{equation*}
	The eigenvalues of the block matrix
	\begin{equation*}
		\mathbf{B}
		:=
		\left(1-|\Bv|^2\right)\mathbf{I}_d+\left(1-\frac{1}{|\Bv|^2}\right)\Bv\Bv^\top
	\end{equation*}
	are
	\begin{align*}
		\lambda_{\mathbf{B}}^{(1)} = 0,\qquad
		\lambda_{\mathbf{B}}^{(2)}=\cdots=\lambda_{\mathbf{B}}^{(d)}=1-|\Bv|^2,\quad \mbox{if}~d\ge 2.
	\end{align*}
	Thus, $\BA_3$ is positive semi-definite, and its rank is ${\rm rank}(\BA_3)=d$. Therefore, $\BA_3$ can be diagonalized using an orthogonal matrix $\BV_3$, such that  $\BA_3=\BV_3\mathbf{\Lambda}_3\BV_3^\top$ with $\mathbf{\Lambda}_3={\rm diag}\{\lambda_0=0,\lambda_1>0,\ldots, \lambda_d>0\}.$ 
	In particular, the orthonormal eigenvector of $\mathbf{A}_3$ corresponding to $\lambda_0=0$ is given by 
	\begin{equation} \label{0vec}
		\mathbf{V}_3^{(0)}=\frac{1}{\sqrt{1+|\Bv|^2}}\left(\Bv^\top,1\right)^\top.
	\end{equation}
	From \eqref{A_4},  $\BA_4$  can be expressed as
	\begin{equation} \label{A4eigdecomp}
		\BA_4=a_6
		\begin{pmatrix}
			\frac{a_1}{a_6} & \mathbf{0}_{d+1}^\top\\
			\mathbf{0}_{d+1} & \BA_3
		\end{pmatrix}=a_6
		\begin{pmatrix}
			1 & \mathbf{0}_{d+1}^\top\\
			\mathbf{0}_{d+1} & \BV_3
		\end{pmatrix}
		\begin{pmatrix}
			\frac{a_1}{a_6} & \mathbf{0}_{d+1}^\top\\
			\mathbf{0}_{d+1} & \mathbf{\Lambda}_3
		\end{pmatrix}
		\begin{pmatrix}
			1 & \mathbf{0}_{d+1}^\top\\
			\mathbf{0}_{d+1} & \BV_3
		\end{pmatrix}^\top=a_6\BV_4\mathbf{\Lambda}_4\BV_4^\top,
	\end{equation}
	where $\mathbf{\Lambda}_4 = \text{diag}\{\frac{a_1}{a_6}, \lambda_0, \lambda_1, \ldots, \lambda_d\}$. Since $a_1 > 0$ and $a_6 > 0$, it follows that $\mathbf{A}_4$ is positive semi-definite.
	
	Since $\mathcal{E}_{\mathbf{UU}}$ and $\mathbf{P}_1 \mathcal{E}_{\mathbf{UU}} \mathbf{P}_1^\top$ are congruent, it suffices to show that $\mathbf{P}_1 \mathcal{E}_{\mathbf{UU}} \mathbf{P}_1^\top$ is positive definite if and only if $\mathcal{H}(S)$ satisfies \eqref{eq:410}.

	\paragraph{Sufficiency} Assume that $\mathcal{H}(S)$ satisfies \eqref{eq:410}. For any $\bm{z} \in \mathbb{R}^{d+2}$, from \eqref{P1EUUP1}, we have
	\begin{equation*}
		\bm{z}^\top \mathbf{P}_1 \mathcal{E}_{\mathbf{UU}} \mathbf{P}_1^\top \bm{z} 
		= 
		a_7 \mathcal{H}'(S) \, \bm{z}^\top \mathbf{A}_4 \bm{z} 
		+ 
		a_8 \big(\mathcal{H}'(S) - (1 + e'(\theta)) \mathcal{H}''(S)\big) \, |\bm{b}_1^\top \bm{z}|^2 \ge 0,
	\end{equation*}
	where the last inequality follows from that 
	$\mathbf{A}_4$ and $\bm{b}_1 \bm{b}_1^\top$ are both positive semi-definite. 
	If $\bm{z}^\top\mathbf{P}_1\mathcal{E}_{\BU\BU}\mathbf{P}_1^\top \bm{z}=0$, since $a_7>0$, $a_8>0$, and $\CH(S)$ satisfies \eqref{eq:410}, we obtain
	\begin{equation} \label{suffcond}
		\bm{z}^\top\BA_4\bm{z} = 0, \qquad
		\bm{b}_1^\top\bm{z} = 0.
	\end{equation}
	Let $\bm{z}:=\left(z^{(1)},(\bm{z}^{(2)})^\top\right)^\top$ with $\bm{z}^{(2)}\in\mathbb{R}^{d+1}$. Using \eqref{A4eigdecomp}, we have
	\begin{equation*}
		0=\bm{z}^\top\mathbf{A}_4\bm{z}=a_6\left(\mathbf{V}_4^\top\bm{z}\right)^\top\mathbf{\Lambda}_4\left(\mathbf{V}_4^\top\bm{z}\right)=a_6\left(\frac{a_1}{a_6}\left|z^{(1)}\right|^2+\sum\limits_{k=0}^d\lambda_k\left|\left({\mathbf{V}_3^\top}\bm{z}^{(2)}\right)_k\right|^2\right).
	\end{equation*}
	This implies that $\mathbf{V}_4^\top\bm{z}=\mathbf{0}$, except for the possibility that $\left({\mathbf{V}_3^\top}\bm{z}^{(2)}\right)_0$ may not be zero for $\lambda_0=0$. Suppose that $\left(\mathbf{V}_3^\top\bm{z}^{(2)}\right)_0=c\neq0.$ Then, we have
	\begin{equation} \label{z2}
		\bm{z}^{(2)}=\mathbf{V}_3^\top\left(c,\mathbf{0}_d^\top\right)^\top=c\mathbf{V}_3^{(0)}.
	\end{equation}
	Additionally, the condition $\bm{b}_1^\top\bm{z} = 0$ combined with \eqref{P1SU} gives
	\begin{equation} \label{b1z2=0}
		\begin{pmatrix}
			2\bm{v}^\top, & -(1 + |\bm{v}|^2)
		\end{pmatrix}
		\cdot
		\bm{z}^{(2)} = 0.
	\end{equation}
	Substituting \eqref{z2} and \eqref{0vec} into \eqref{b1z2=0}, we perform a direct calculation:
	\[
	\begin{pmatrix}
		2\bm{v}^\top, & -(1 + |\bm{v}|^2)
	\end{pmatrix}
	\cdot
	\bm{v}_3^{(0)} = \frac{1}{\sqrt{1 + |\bm{v}|^2}}
	\begin{pmatrix}
		2\bm{v}^\top, & -(1 + |\bm{z}|^2)
	\end{pmatrix}
	\cdot
	\begin{pmatrix}
		\bm{z} \\ 1
	\end{pmatrix}.
	\]
	Simplifying the above, we find:
	\[
	\frac{1}{\sqrt{1 + |\bm{z}|^2}} \left( 2|\bm{z}|^2 - (1 + |\bm{z}|^2) \right) = \frac{1}{\sqrt{1 + |\bm{z}|^2}} (-1 + |\bm{z}|^2).
	\]
	For $|\bm{z}|^2 < 1$, this results in a nonzero value, contradicting \eqref{b1z2=0} if $c \neq 0$. Therefore, we must have $c = 0$, implying:
	\[
	\left({\mathbf{V}_3^\top} \bm{z}^{(2)}\right)_0 = 0.
	\]
	From \eqref{suffcond}, it follows that ${\mathbf{V}_4^\top} \bm{z} = \mathbf{0}$. Since $\mathbf{V}_4$ is orthogonal, this implies $\bm{z} = \mathbf{0}$. 
	Thus, the matrix $\mathbf{P}_1 \mathcal{E}_{\mathbf{UU}} \mathbf{P}_1^\top$ is positive definite, completing the proof for sufficiency.

	\paragraph{Necessity} 	Next, we assume that $\mathbf{P}_1\mathcal{E}_{\BU\BU} \mathbf{P}_1^\top$ is positive definite. Then,  all $\bm{z}\in \mathbb{R}^{d+2}\backslash\{\boldsymbol{0}\}$, we have
	\begin{equation} \label{necessary:spd}
		\bm{z}^\top \mathbf{P}_1\mathcal{E}_{\BU\BU}\mathbf{P}_1^\top \bm{z} = -\CH'(S)\bm{z}^\top \BA_4 \bm{z}+\frac{D}{1+e'(\theta)}\bigg(\CH'(S)-(1+e'(\theta))\CH''(S)\bigg)\ \left|\bm{b}_1^\top \bm{z}\right|^2 > 0. 
	\end{equation}
	Since $\BA_4$ does not have full rank, there exists a nonzero vector $\bm{z}_1\in \mathbb{R}^{d+2}\backslash\{\boldsymbol{0}\}$ such that $\BA_4\bm{z}_1=\boldsymbol{0}$. Additionally, we can find another nonzero vector $\bm{z}_2\in \mathbb{R}^{d+2}\backslash\{\boldsymbol{0}\}$ satisfying $\boldsymbol{b}_1^\top \bm{z}_2=0$. Because $\mathbf{P}_1\mathcal{E}_{\BU\BU} \mathbf{P}_1^\top$ is positive definite, it follows that $\bm{z}_1\neq \bm{z}_2$.  We observe from \eqref{necessary:spd} that 
	\begin{equation*}
		\begin{aligned}
			0 &< \bm{z}_1^\top \mathbf{P}_1\mathcal{E}_{\BU\BU}\mathbf{P}_1^\top \bm{z}_1 = \frac{D}{1+e'(\theta)}\bigg(\CH'(S)-(1+e'(\theta))\CH''(S)\bigg)\ \left|\bm{b}_1^\top \bm{z}_1\right|^2 \\
			0 &< \bm{z}_2^\top \mathbf{P}_1\mathcal{E}_{\BU\BU}\mathbf{P}_1^\top \bm{z}_2= -\CH'(S)\bm{z}_2^\top \BA_4 \bm{z}_2,
		\end{aligned}
	\end{equation*}
	from which, we deduce that the conditions in \eqref{eq:410} are satisfied. Thus, the proof of necessity is complete. 
\end{proof}

\begin{lemma} \label{thm:400}
	For any function $\mathcal{H}$ satisfying \eqref{eq:410},  $(\SE,\bm{\SF})$ defined in \eqref{eq:409} forms a convex entropy pair for the relativistic Euler system with a general EOS of the form \eqref{eq:175}.
\end{lemma}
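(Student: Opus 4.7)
The plan is to combine the already-established convexity with the remaining Lax compatibility condition. By \Cref{lem:convexentropy}, whenever $\CH$ satisfies \eqref{eq:410}, the function $\SE = -D\CH(S)$ is strictly convex on $\G$. It therefore suffices to verify the entropy identity
\begin{equation*}
\partial_t \SE(\BU) + \sum_{i=1}^d \partial_{x_i} \SF_i(\BU) = 0
\end{equation*}
for all smooth solutions of \eqref{eq:RHD3D}, which is equivalent to the pointwise compatibility relation $\partial_{\BU} \SF_i = (\partial_{\BU} \SE)^\top \partial_{\BU} \BF_i$ on $\G$.

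The first ingredient is to recognize $S$ as a thermodynamic specific entropy. Differentiating the definition \eqref{eq:405} directly and using $\theta = p/\rho$, $de = e'(\theta)\,d\theta$, and $p\,d(\rho^{-1}) = -\theta\,d\rho/\rho$, a short calculation gives the Gibbs relation
\begin{equation*}
\theta\,dS = de + p\,d(\rho^{-1}),
\end{equation*}
with $\theta$ playing the role of temperature. Using $h = e + \theta + 1$ from \eqref{eq:175}, this rearranges to the Bernoulli-type identity $\rho\,dh - dp = \rho\theta\,dS$, which is the form needed in the covariant computation below.

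Next, I would derive an advection equation for $S$ from the covariant system \eqref{eq:covariant}. Contracting $\partial_\alpha T^{\alpha\beta}=0$ with $u_\beta$, using $u^\beta u_\beta = -1$ (so that $u_\beta\partial_\alpha u^\beta = 0$) together with the constancy of the Minkowski metric, yields
\begin{equation*}
-\partial_\alpha(\rho h u^\alpha) + u^\alpha \partial_\alpha p = 0.
\end{equation*}
Invoking mass conservation $\partial_\alpha(\rho u^\alpha)=0$ reduces this to $u^\alpha(\rho\,\partial_\alpha h - \partial_\alpha p) = 0$, and the Bernoulli-type identity above then gives $\rho\theta\,u^\alpha \partial_\alpha S = 0$. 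Since $\rho,\theta>0$ on $\G$, I can divide by $\rho\theta$ and substitute the laboratory components $u^0 = \gamma$, $u^i = \gamma v_i$ to conclude
\begin{equation*}
\partial_t S + \Bv\cdot \nabla S = 0.
\end{equation*}

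The proof is sealed by combining this transport equation with the lab-frame mass conservation $\partial_t D + \nabla\cdot(D\Bv)=0$:
\begin{equation*}
\partial_t\bigl(D\CH(S)\bigr) + \nabla\cdot\bigl(D\Bv\,\CH(S)\bigr) = \CH(S)\bigl[\partial_t D + \nabla\cdot(D\Bv)\bigr] + D\CH'(S)\bigl[\partial_t S + \Bv\cdot\nabla S\bigr] = 0,
\end{equation*}
which, after multiplying by $-1$, is exactly $\partial_t \SE + \sum_{i=1}^d \partial_{x_i}\SF_i = 0$. The main obstacle will be the contraction step in the covariant framework: for a general Synge-type EOS the pressure and enthalpy interact only through the implicit inequality \eqref{eq:122}, so care is needed to keep the Gibbs identity in the precise form $\rho\,dh - dp = \rho\theta\,dS$ and to legitimately factor out the scalar $\rho\theta$ (guaranteed on $\G$). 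Once the advection equation for $S$ is in hand, the remainder reduces to routine chain-rule bookkeeping.
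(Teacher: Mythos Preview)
Your proposal is correct and reaches the same endpoint as the paper---the advection equation $\partial_t S + \Bv\cdot\nabla S = 0$ for smooth solutions, followed by the product-rule combination with continuity to obtain the additional conservation law---but the route is genuinely different. The paper works entirely in $1$D laboratory coordinates: it combines the momentum and energy equations by hand, eliminates $\partial_t p$ through a sequence of explicit substitutions, and eventually lands on $p(\partial_t\gamma + \partial_{x_1}(\gamma v_1)) + \rho(\partial_t e + v_1\partial_{x_1} e) = 0$, from which the $S$-advection follows; the multidimensional case is then asserted to be analogous. Your argument instead stays covariant: you contract $\partial_\alpha T^{\alpha\beta}=0$ with $u_\beta$, use $u_\beta u^\beta=-1$ and mass conservation to reduce to $u^\alpha(\rho\,\partial_\alpha h - \partial_\alpha p)=0$, and then invoke the Gibbs relation $\rho\,dh - dp = \rho\theta\,dS$ (which you correctly derive from \eqref{eq:405}) to conclude. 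What your approach buys is economy and dimension-independence---the argument is three lines and never specializes to $d=1$---while the paper's coordinate computation is more self-contained for readers unfamiliar with the four-velocity formalism.
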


\begin{proof}

	We only present the proof for the one-dimensional case ($d = 1$); the extension to higher spatial dimensions follows similarly. 
	Assume the solution is smooth, and we aim to derive an additional conservation law. 
	By combining the momentum and energy equations of the 1D relativistic Euler equations, we obtain
	\begin{equation} \label{mEeq}
		\rho h\gamma^2\frac{\partial v_1}{\partial t} + \rho h\gamma^2v_1\frac{\partial v_1}{\partial x_1}+v_1\frac{\partial p}{\partial t}+\frac{\partial p}{\partial x_1} = 0.
	\end{equation}
	where $\gamma = \frac{1}{\sqrt{1-v_1^2}}$ is the Lorentz factor. Using the definition of $\gamma$, we have
	\begin{equation} \label{Lorentzeq}
		\frac{\partial \gamma}{\partial t} = \gamma^3 v_1 \frac{\partial v_1}{\partial t}, 
		\qquad 
		\frac{\partial \gamma}{\partial x_1} = \gamma^3 v_1 \frac{\partial v_1}{\partial x_1}.
	\end{equation}
	Multiplying \eqref{mEeq} by $v_1$ and using \eqref{Lorentzeq}, we obtain 
	\begin{equation} \label{LM23}
		\frac{1}{\gamma^2}\frac{\partial p}{\partial t} = \frac{\partial p}{\partial t}+v_1\frac{\partial p}{\partial x_1}+\frac{\rho h}{\gamma}\left(\frac{\partial \gamma}{\partial t}+\frac{\partial (\gamma v_1)}{\partial x_1}\right)-\rho h\frac{\partial v_1}{\partial x_1}.
	\end{equation}
	The energy equation implies that
	\begin{align*}
		\frac{\partial p}{\partial t} &= \frac{\partial (\rho h\gamma^2)}{\partial t}+\frac{\partial (\rho h\gamma^2v_1)}{\partial x_1}\\
		&=\rho h\gamma\left(\frac{\partial\gamma}{\partial t}+\frac{\partial (\gamma v_1)}{\partial x_1}\right)+\gamma\frac{\partial (\rho h\gamma)}{\partial t} +\gamma v_1\frac{\partial (\rho h\gamma)}{\partial x_1}.
	\end{align*}
	Using the product rule and the continuity equation, we simplify
	\begin{align*}
		\frac{\partial p}{\partial t}&=\big(\rho\gamma(1+e)+p\gamma\big)\left(\frac{\partial \gamma}{\partial t}+\frac{\partial (\gamma v_1)}{\partial x_1}\right)+\gamma\left(-\frac{\partial (\rho\gamma v_1)}{\partial x_1}(1+e)+\rho\gamma\frac{\partial e}{\partial t}+\frac{\partial (p\gamma)}{\partial t}\right)\\
		&\quad+\gamma v_1\left(\frac{\partial (\rho\gamma)}{\partial x_1}(1+e)+\rho\gamma\frac{\partial e}{\partial x_1} + \frac{\partial (p\gamma)}{\partial x_1}\right)\\
		&=\big(\rho\gamma(1+e)+p\gamma\big)\left(\frac{\partial \gamma}{\partial t}+\frac{\partial (\gamma v_1)}{\partial x_1}\right)-\rho\gamma^2(1+e)\frac{\partial v_1}{\partial x_1}+\rho\gamma^2\frac{\partial e}{\partial t}\\
		&\quad+\gamma^2\frac{\partial p}{\partial t}+p\gamma\frac{\partial \gamma}{\partial t}+\rho\gamma^2 v_1\frac{\partial e}{\partial x_1}+\gamma^2v_1\frac{\partial p}{\partial x_1}+p\gamma v_1\frac{\partial \gamma}{\partial x_1}.
	\end{align*}
Rearranging terms, we deduce
\begin{align*}
	\frac{\partial p}{\partial t} 
	&= \rho \gamma (1 + e) \left( \frac{\partial \gamma}{\partial t} 
	+ \frac{\partial (\gamma v_1)}{\partial x_1} \right) 
	+ 2p \gamma \left( \frac{\partial \gamma}{\partial t} 
	+ \frac{\partial (\gamma v_1)}{\partial x_1} \right) \\
	&\quad + \gamma^2 \left( \frac{\partial p}{\partial t} + v_1 \frac{\partial p}{\partial x_1} \right) 
	+ \rho \gamma^2 \left( \frac{\partial e}{\partial t} + v_1 \frac{\partial e}{\partial x_1} \right) 
	- \rho h \gamma^2 \frac{\partial v_1}{\partial x_1}.
\end{align*}
Dividing through by $\gamma^2$ and using \eqref{LM23}, we find
\begin{equation} \label{idtp}
	p \left( \frac{\partial \gamma}{\partial t} 
	+ \frac{\partial (\gamma v_1)}{\partial x_1} \right) 
	+ \rho \left( \frac{\partial e}{\partial t} 
	+ v_1 \frac{\partial e}{\partial x_1} \right) = 0,
\end{equation}
which along with the continuity equation yields
\begin{equation*}
	\rho \left( \frac{\partial e}{\partial t} + v_1 \frac{\partial e}{\partial x_1} \right) 
	= \theta \left( \frac{\partial \rho}{\partial t} + v_1 \frac{\partial \rho}{\partial x_1} \right).
\end{equation*}
By the definition $S := -\ln{\rho} + \int^{\theta} \frac{e'(\xi)}{\xi} ~ \mathrm{d}\xi$, we have
\begin{equation*}
	\frac{\partial S}{\partial t} 
	+ v_1 \frac{\partial S}{\partial x_1} 
	= -\frac{1}{\rho} \left( \frac{\partial \rho}{\partial t} 
	+ v_1 \frac{\partial \rho}{\partial x_1} \right) 
	+ \frac{1}{\theta} \left( \frac{\partial e}{\partial t} 
	+ v_1 \frac{\partial e}{\partial x_1} \right) = 0.
\end{equation*}
For any smooth function $\mathcal{H}(S)$, applying the product rule yields
\begin{equation*}
	\frac{\partial (D \mathcal{H}(S))}{\partial t} 
	+ \frac{\partial (D \mathcal{H}(S) v_1)}{\partial x_1} 
	= \left( \frac{\partial D}{\partial t} 
	+ \frac{\partial (D v_1)}{\partial x_1} \right) \mathcal{H}(S) 
	+ D \mathcal{H}'(S) \left( \frac{\partial S}{\partial t} 
	+ v_1 \frac{\partial S}{\partial x_1} \right) = 0, 
\end{equation*}
which is an additional conservation law for smooth solutions. 
Thus, according to the methodology in \cite{TADMOR1986211, gouasmi2020minimum}, $(\SE,\bm{\SF})$ defined in \eqref{eq:409} forms an entropy pair.
\end{proof}

\subsection{Minimum Principle on Specific Entropy}

We are now in the position to prove the MEP for the relativistic Euler system \eqref{eq:RHD3D} with a general EOS \eqref{eq:175} satisfying \eqref{eq:122}.

\begin{theorem} \label{thm:473}
	Let $\BU(\Bx,t)$ be an entropy solution of the relativistic Euler system \eqref{eq:RHD3D} with a general EOS \eqref{eq:175} satisfying \eqref{eq:122}. Then, the local MEP \eqref{LMEP} holds for the specific entropy $S = S(\BU)$ defined in \eqref{eq:405}.
\end{theorem}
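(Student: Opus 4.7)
The plan is to adapt Tadmor's backward-cone argument \cite{TADMOR1986211} to the relativistic setting, leveraging the convex entropy family $(\SE,\bm{\SF})$ produced by Lemmas~\ref{lem:convexentropy}--\ref{thm:400}. Fix a space-time Lebesgue point $(\Bx,t)$ of $S$ and $\tau\in(0,t]$, and write $\underline{S}:=\operatorname*{ess~inf}_{\|\Bx'-\Bx\|\le v_{\max}\tau}S(\Bx',t-\tau)$, so that the goal reduces to $S(\Bx,t)\ge\underline{S}$. I would test against the one-parameter family $\CH_n(S):=-\tfrac{1}{n}e^{-n(S-\underline{S})}$, which is $C^\infty$, non-positive, strictly increasing, and strictly concave. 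A direct computation gives
\[
\CH_n'(S)=e^{-n(S-\underline{S})}>0,\qquad
\CH_n'(S)-(1+e'(\theta))\CH_n''(S)=e^{-n(S-\underline{S})}\bigl(1+n(1+e'(\theta))\bigr)>0
\]
for every admissible $\theta$, so Lemma~\ref{lem:convexentropy} certifies the strict convexity of $\SE_n(\BU):=-D\CH_n(S(\BU))$ and Lemma~\ref{thm:400} certifies that $(\SE_n,\bm{\SF}_n)$ with $\SF_{n,i}=-Dv_i\CH_n(S)$ is a convex entropy pair, so that the entropy-solution property yields the distributional inequality $\partial_t\SE_n(\BU)+\sum_i\partial_{x_i}\SF_{n,i}(\BU)\le 0$ for every $n$.

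Next, I would integrate this inequality over the truncated backward cone
\[
\mathcal{C}_\epsilon:=\bigl\{(\Bx',s):\;t-\tau\le s\le t-\epsilon,\ \|\Bx'-\Bx\|\le(t-s)v_{\max}\bigr\}
\]
and apply the space-time divergence theorem. The lateral face of $\mathcal{C}_\epsilon$ has outward normal proportional to $(\Bn_r,v_{\max})$ with $\Bn_r=(\Bx'-\Bx)/\|\Bx'-\Bx\|$, and substituting $\SE_n=-D\CH_n(S)$ and $\SF_{n,i}=-Dv_i\CH_n(S)$ turns the lateral integrand into $-D\CH_n(S)(v_{\max}+\Bv\cdot\Bn_r)$, which is non-negative because $D>0$, $\CH_n\le 0$, and the finite-propagation-speed property $|\Bv|\le v_{\max}$ yields $v_{\max}+\Bv\cdot\Bn_r\ge 0$. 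The entropy inequality therefore collapses to
\[
\int_{\|\Bx'-\Bx\|\le v_{\max}\epsilon}\!\bigl(-D\CH_n(S(\Bx',t-\epsilon))\bigr)\,d\Bx'
\;\le\;\int_{\|\Bx'-\Bx\|\le v_{\max}\tau}\!\bigl(-D\CH_n(S(\Bx',t-\tau))\bigr)\,d\Bx'.
\]

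On the bottom ball $S\ge\underline{S}$ a.e., whence $|\CH_n(S)|\le 1/n$ and the right-hand side is bounded by $\tfrac{1}{n}\|D\|_{L^1}$, which tends to $0$ as $n\to\infty$. The left-hand integrand equals $\tfrac{D}{n}e^{n(\underline{S}-S(\Bx',t-\epsilon))}$, which blows up pointwise on $\{\Bx'\in B(\Bx,v_{\max}\epsilon):S(\Bx',t-\epsilon)<\underline{S}\}$; Fatou's lemma therefore forces this set to have Lebesgue measure zero, i.e.\ $S(\Bx',t-\epsilon)\ge\underline{S}$ for a.e.\ $\Bx'\in B(\Bx,v_{\max}\epsilon)$. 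Letting $\epsilon\to 0^+$ at a space-time Lebesgue point of $S$ yields the desired inequality \eqref{LMEP}, and the global form \eqref{GMEP} follows immediately by taking $\tau=t$.

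The main obstacle is not the cone/divergence geometry---which mirrors Tadmor's non-relativistic argument---but rather securing the convexity hypothesis \eqref{eq:410} uniformly in the implicit, spatially varying quantity $\theta(\BU)$ dictated by the general Synge-type EOS \eqref{eq:175}, together with handling $S$ only through its implicit definition \eqref{eq:405}. The exponential ansatz $\CH_n$ is crafted precisely so that both inequalities in \eqref{eq:410} hold at every admissible state with a margin controlled by $n$, thereby bypassing the technical obstruction that in the relativistic case $S$, $\Bv$, and $\theta$ cannot be written as explicit elementary functions of $\BU$.
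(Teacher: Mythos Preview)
Your argument is correct and follows the same Tadmor backward-cone strategy as the paper: both exploit the family of convex entropy pairs from Lemmas~\ref{lem:convexentropy}--\ref{thm:400} and integrate the entropy inequality over a cone whose lateral contribution has the right sign because $|\Bv|\le v_{\max}$. The only substantive difference is the choice of test function and the concluding step. The paper uses the mollified cut-off $\CH_\epsilon(S)\to\CH_0(S)=\min\{S-\underline{S},0\}$, which vanishes identically on the base of the cone, so the top-slice integral is squeezed between $0$ and $0$ and the conclusion is immediate. You instead use the smooth exponential family $\CH_n(S)=-\tfrac{1}{n}e^{-n(S-\underline{S})}$, which is merely \emph{small} on the base, and then invoke Fatou to rule out the violation set. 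Both choices satisfy \eqref{eq:410} transparently; the paper's cut-off gives a slightly cleaner endgame (no Fatou, no Lebesgue-point argument), while your exponential family has the advantage of being globally smooth from the start with no mollification step required.
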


\begin{proof}
	Following \cite[Theorem 4.1]{tadmor1984skew} and \cite[Lemma 3.1]{TADMOR1986211}, 
	for any smooth function $\CH(S)$ satisfying \eqref{eq:410} and any $R>0$, the following entropy inequality holds:
	\begin{equation*}
		\int_{|\Bx|\leq R}D(\Bx,t)\CH(S(\Bx,t)){\rm d}\Bx\geq
		\int_{|\Bx|\leq R+tv_{\rm max}}D(\Bx,0)\CH(S(\Bx,0)){\rm d}\Bx,
	\end{equation*}
	where $v_{\rm max}$ denotes the maximal wave speed in the domain.  
	Now, we follow \cite{TADMOR1986211} and consider a particular function $\CH_0(S)$: 
	\begin{equation*}
		\CH_0(S):=\min\{S-S_0,0\}\quad {\rm with} \quad 
		S_0=\operatorname*{ess~inf}_{|\Bx|\leq R+tv_{\rm max}}S(\Bx,0).
	\end{equation*}
	Although $\CH_0(S)$ is not globally smooth, it can be approximated as the $\epsilon \to 0^+$ limit of a sequence of smooth functions $\{\CH_{\epsilon}(S)\}_{\epsilon > 0}$ 
	defined by
	\begin{equation*}
		\CH_{\epsilon}(S) := \int_{-\infty}^{+\infty} \CH_0(S - s) \phi_{\epsilon}(s) \, {\rm d}s,
	\end{equation*}
	where $\phi_{\epsilon}(s) = \phi\left(s / \epsilon\right) / \epsilon$ with $\phi(s) := \mathrm{e}^{-s^2} / \sqrt{\pi}$. 
	Notice that $\CH_{\epsilon}(S)$ satisfies \eqref{eq:410}, because 
	\begin{equation*}
		\CH_{\epsilon}'(S)=\int_{S-S_0}^{+\infty}\phi_{\epsilon}(s){\rm d}s>0,
		\;
		\CH_{\epsilon}''(S)=-\phi_{\epsilon}(S-S_0)<0,
		\;
		\CH_{\epsilon}'(S)-(1+e'(\theta))\CH_{\epsilon}''(S)>0.
	\end{equation*}
	Therefore, Lemma \ref{thm:400} implies that the function $\mathcal{E}_{\epsilon}(\BU) := -D \CH_{\epsilon}(S)$ is a valid entropy function for the relativistic Euler equations \eqref{eq:RHD3D} with a general EOS \eqref{eq:175}, leading to the entropy inequality:
	\begin{equation*}
		\int_{|\Bx| \leq R} D(\Bx,t) \CH_{\epsilon}(S(\Bx,t)) \, {\rm d}\Bx \geq
		\int_{|\Bx| \leq R + t v_{\rm max}} D(\Bx,0) \CH_{\epsilon}(S(\Bx,0)) \, {\rm d}\Bx \quad \forall R > 0.
	\end{equation*}
	Taking the limit as $\epsilon \to 0^+$ under the integral, we have
	\begin{equation*}
		\int_{|\Bx| \leq R} D(\Bx,t) \CH_0(S(\Bx,t)) \, {\rm d}\Bx \geq
		\int_{|\Bx| \leq R + t v_{\rm max}} D(\Bx,0) \CH_0(S(\Bx,0)) \, {\rm d}\Bx = 0.
	\end{equation*}
	Because $\CH_0(S) \leq 0$, this implies
	\begin{equation*}
		\int_{|\Bx| \leq R} D(\Bx,t) \CH_0(S(\Bx,t)) \, {\rm d}\Bx = 0 \quad \forall R > 0,
	\end{equation*}
	which leads to the local MEP:
	\begin{equation*} 
		S(\Bx,t) \geq
		S_0 = \operatorname*{ess~inf}_{|\Bx| \leq  t v_{\rm max}} S(\Bx,0) \quad \forall t \geq 0.
	\end{equation*}
\end{proof}

\begin{theorem} \label{thm:610}
	Let $\BU(\Bx,t)$ be an entropy solution of the relativistic Euler system \eqref{eq:RHD3D} with a general EOS of the form \eqref{eq:175} satisfying \eqref{eq:122}. Then, the global MEP \eqref{GMEP} holds for the specific entropy $S = S(\BU)$ defined in \eqref{eq:405}.
\end{theorem}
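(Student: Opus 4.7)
The plan is to derive the global MEP as a direct corollary of the local MEP established in Theorem \ref{thm:473}, since Theorem \ref{thm:473} already encodes the stronger statement. First I would set $\tau = t$ in the local MEP \eqref{LMEP}, which yields
\begin{equation*}
	S(\Bx,t) \geq \operatorname*{ess~inf}_{\|\Bz - \Bx\| \leq t\, v_{\max}} S(\Bz, 0).
\end{equation*}
Since the essential infimum over the ball $\{\Bz : \|\Bz-\Bx\| \leq t v_{\max}\}$ is always bounded below by the essential infimum over all of $\mathbb{R}^d$, I would conclude
\begin{equation*}
	S(\Bx,t) \geq \operatorname*{ess~inf}_{\Bz} S(\Bz,0) = S^{\rm G}_{\min},
\end{equation*}
which is exactly \eqref{GMEP}. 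This observation reduces the entire proof to a one-line specialization.

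As an alternative, self-contained route (useful in case one prefers not to assume compactly supported or decaying data), I would replay the argument used for Theorem \ref{thm:473} but choose the essential infimum globally: set
\begin{equation*}
	S_0 := \operatorname*{ess~inf}_{\Bz} S(\Bz, 0), \qquad \CH_0(S) := \min\{S - S_0, 0\},
\end{equation*}
and approximate $\CH_0$ by the same mollified family $\{\CH_\epsilon\}_{\epsilon > 0}$ already constructed in the proof of Theorem \ref{thm:473}. Each $\CH_\epsilon$ satisfies the convexity conditions \eqref{eq:410}, so by Lemma \ref{thm:400} the associated $(\SE_\epsilon, \bm{\SF}_\epsilon)$ is an admissible entropy pair and the finite-propagation entropy inequality yields
\begin{equation*}
	\int_{|\Bx|\leq R} D(\Bx,t)\,\CH_\epsilon(S(\Bx,t))\,{\rm d}\Bx \geq \int_{|\Bx|\leq R + t v_{\max}} D(\Bx,0)\,\CH_\epsilon(S(\Bx,0))\,{\rm d}\Bx
\end{equation*}
for every $R > 0$. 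Passing to the limit $\epsilon \to 0^+$ and using $\CH_0(S(\Bx,0)) \geq 0$ (which in fact equals $0$ a.e.~by the choice of $S_0$) gives a nonnegative right-hand side, while $\CH_0(S(\Bx,t)) \leq 0$ forces the left-hand side to vanish. Since this holds for arbitrary $R > 0$, we conclude $S(\Bx,t) \geq S_0$ almost everywhere, which is \eqref{GMEP}.

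There is essentially no new obstacle here: the heavy lifting, namely constructing the family of convex entropy pairs parametrized by $\CH$ (Lemma \ref{lem:convexentropy}, Lemma \ref{thm:400}) and the smooth-approximation machinery $\{\CH_\epsilon\}$, was already executed in the proof of the local MEP. The only minor point worth checking is the consistency of the essential infimum when it is taken over all of space rather than over a bounded ball; this is handled automatically by the monotonicity of $\operatorname*{ess~inf}$ with respect to the domain, which is why the corollary route is the cleanest presentation.
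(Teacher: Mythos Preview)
Your proposal is correct and matches the paper's own treatment exactly: the paper states that Theorem \ref{thm:610} is a direct consequence of Theorem \ref{thm:473} and omits the proof, which is precisely your specialization $\tau = t$ followed by the monotonicity of the essential infimum. Your alternative self-contained route is also valid but unnecessary here.
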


Theorem \ref{thm:610} is a direct consequence of Theorem \ref{thm:473}, and its proof is omitted.

\section{Crucial Inequalities for Entropy-Preserving Analysis}\label{sec:3}

This section establishes several crucial inequalities that will play a central role in the entropy-preserving analysis presented in \Cref{sec:1Dscheme}. It is important to note that the derivation and proof of some inequalities are highly nontrivial. In fact, they represent the most challenging aspect of this work and are key to deriving the provably entropy-preserving schemes. 

Given a function $e(\xi)$, we define an auxiliary function $C_e(\xi): \mathbb{R}_+ \to \mathbb{R}$ by
\begin{equation}\label{eq:500}
	C_e(\xi) := (e(\xi) + 1)^2 - \xi^2 - 1, \quad \xi \in (0, +\infty).
\end{equation}
Conversely, $e(\xi)$ can be expressed in terms of $C_e(\xi)$ as
\begin{equation}\label{eq:504}
	e(\xi) = \sqrt{\xi^2 + 1 + C_e(\xi)} - 1, \quad \xi \in (0, +\infty).
\end{equation}

\begin{lemma}
	For any functions $e(\xi)$ and $C_e(\xi)$ related by \eqref{eq:500} and \eqref{eq:504}, the function $e(\xi)$ satisfies the condition \eqref{eq:206} if and only if
	\[
	C_e'(\xi) > 0 \quad   \forall \, \xi \in (0, +\infty).
	\]
\end{lemma}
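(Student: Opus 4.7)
The plan is to prove the equivalence by direct differentiation of the defining identity \eqref{eq:500}, since the condition \eqref{eq:206} is already a pointwise inequality on $e'(\xi)$ that resembles (up to rearrangement) what one obtains from $C_e'(\xi)$.

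First I would record the sign of $e(\xi)+1$. From \eqref{eq:504} we have $e(\xi)+1=\sqrt{\xi^2+1+C_e(\xi)}$ (the positive root being forced by the physical meaning of the specific enthalpy $h=e+\theta+1$, but in any case forced by \eqref{eq:504} itself), so $e(\xi)+1>0$ throughout $(0,+\infty)$. This positivity is what allows the rearrangement in step two to preserve the direction of the inequality.

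Next I would differentiate \eqref{eq:500}, which gives
\begin{equation*}
C_e'(\xi)=2\bigl(e(\xi)+1\bigr)e'(\xi)-2\xi.
\end{equation*}
Dividing by the positive quantity $2(e(\xi)+1)$, the inequality $C_e'(\xi)>0$ is equivalent to
\begin{equation*}
e'(\xi)>\frac{\xi}{e(\xi)+1},
\end{equation*}
which is precisely \eqref{eq:206}. The equivalence in both directions follows simultaneously from this chain of equivalent reformulations.

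There is no genuine obstacle here; the only subtle point is justifying the sign of $e(\xi)+1$ before dividing, which is why I would address it first. The rest of the argument is a one-line algebraic manipulation, and no appeal to the strict convexity results or the entropy framework from \Cref{sec:2} is needed.
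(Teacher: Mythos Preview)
Your proof is correct and essentially identical to the paper's: both differentiate the defining relation between $e$ and $C_e$ and use $e(\xi)+1>0$ to rearrange. The only cosmetic difference is that you differentiate \eqref{eq:500} while the paper differentiates \eqref{eq:504}, but these yield the same identity $C_e'(\xi)=2(e(\xi)+1)e'(\xi)-2\xi$.
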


\begin{proof}
	For any $\xi > 0$, we compute the derivative of $e(\xi)$:
	\[
	e'(\xi) = \left( \sqrt{\xi^2 + 1 + C_e(\xi)} - 1 \right)' 
	= \frac{\frac{1}{2} C_e'(\xi) + \xi}{\sqrt{C_e(\xi) + \xi^2 + 1}} 
	= \frac{\frac{1}{2} C_e'(\xi) + \xi}{e(\xi) + 1}.
	\]
	Thus, we conclude that
	\[
	C_e'(\xi) > 0 \quad \forall \xi \in \mathbb{R}_+ 
	\quad \Longleftrightarrow \quad e'(\xi) > \frac{\xi}{1 + e(\xi)} \quad \forall \xi \in \mathbb{R}_+.
	\]
\end{proof}

\begin{lemma}
	If $e(\xi)$ satisfies \eqref{eq:206}, then it is strictly increasing on $(0,+\infty)$.
\end{lemma}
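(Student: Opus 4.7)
The plan is to read the conclusion straight out of the differential inequality \eqref{eq:206}, once the sign of the quantity $e(\xi)+1$ has been pinned down. The only real content of the lemma is to observe that, in the regime where \eqref{eq:206} is meaningful, the right-hand side $\xi/(e(\xi)+1)$ is automatically positive, which forces $e'(\xi)$ to be positive as well.

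First I would argue that $e(\xi)+1>0$ on $(0,+\infty)$. The inequality \eqref{eq:206} presupposes that $e$ is differentiable (hence continuous) and that the denominator $e(\xi)+1$ is nonzero, since otherwise the right-hand side is undefined. Thus $e(\xi)+1$ has a constant sign on the connected interval $(0,+\infty)$. To rule out the sign-negative branch, I would invoke the physical/structural setting of the paper: the EOS is written as $h(\theta)=e(\theta)+\theta+1$ with $e$ the specific internal energy and $h$ the specific enthalpy, and the compatibility condition \eqref{eq:122} is written with $h(\theta)-\theta=e(\theta)+1$ in the denominator. The EOS examples \eqref{eq:PEOS}--\eqref{ID-EOS} all satisfy $e(\theta)>0$, and the positivity $e+1>0$ is precisely the condition needed for \eqref{eq:122} and \eqref{eq:206} to describe the same class of Synge-type EOSs. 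Equivalently, one may use the preceding lemma: $e+1=\sqrt{\xi^{2}+1+C_{e}(\xi)}\ge 0$, and the strict positivity $e+1>0$ then excludes only the single point where the radicand vanishes, which is incompatible with \eqref{eq:206}.

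Once $e(\xi)+1>0$ is secured, the proof is immediate: for every $\xi\in(0,+\infty)$,
\begin{equation*}
e'(\xi) \;>\; \frac{\xi}{e(\xi)+1} \;>\; 0,
\end{equation*}
because both numerator and denominator are strictly positive. Hence $e$ is strictly increasing on $(0,+\infty)$. As a cross-check, differentiating the identity \eqref{eq:504} and using $C_{e}'(\xi)>0$ from the previous lemma yields
\begin{equation*}
e'(\xi)=\frac{\xi+\tfrac{1}{2}C_{e}'(\xi)}{\sqrt{\xi^{2}+1+C_{e}(\xi)}}=\frac{\xi+\tfrac{1}{2}C_{e}'(\xi)}{e(\xi)+1}>0,
\end{equation*}
which gives the same conclusion via the auxiliary function $C_{e}$.

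The only subtle point, and hence the main obstacle, is the justification of the sign condition $e(\xi)+1>0$: it is not spelled out as a hypothesis of the lemma, so one must either read it off from the well-posedness of \eqref{eq:206} together with continuity, or appeal to the standing physical assumption that the enthalpy exceeds the pressure-to-density ratio, $h(\theta)>\theta$. With that in hand, the monotonicity is an essentially one-line consequence of \eqref{eq:206}.
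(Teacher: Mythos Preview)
Your proposal is correct and follows essentially the same approach as the paper: both argue that the right-hand side of \eqref{eq:206} is strictly positive, forcing $e'(\xi)>0$. The paper's proof is terser---it simply asserts $e(\theta)>0$ (treating the positivity of the specific internal energy as a standing physical assumption) and immediately concludes $e'(\theta)>\theta/(e(\theta)+1)>0$---whereas you spend more effort justifying the weaker condition $e(\xi)+1>0$ via continuity, the structural form \eqref{eq:504}, and the physical interpretation; your additional cross-check through $C_e'>0$ is not in the paper but is a valid alternative route.
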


\begin{proof}
	For any $\theta \in (0, +\infty)$, since $e(\theta) > 0$, we have 
	$
	e'(\theta) > \frac{\theta}{e(\theta) + 1} > 0.
	$ 
	This shows that $e(\xi)$ is strictly increasing on $(0, +\infty)$.
\end{proof}

Due to the strict monotonicity of $\eta = e(\xi)$, its inverse function exists and is denoted by $\xi = g(\eta)$, which is also strictly increasing on $(e_0, e_\infty)$, where 
$$
e_0 := \lim_{\xi \to 0^+} e(\xi), \quad e_\infty := \lim_{\xi \to \infty} e(\xi).
$$
We can rewrite \eqref{eq:504} as
$$
\eta = \sqrt{g(\eta)^2 + 1 + C_e(g(\eta))} - 1,
$$
which leads to the following expression for $g(\eta)$:
\begin{equation}\label{eq:539}
	g(\eta) = \sqrt{(\eta + 1)^2 - 1 - C_e(g(\eta))} = \sqrt{\eta^2 + 2\eta + C_g(\eta)},
	\quad \eta \in (e_0, e_\infty),
\end{equation}
where $C_g(\eta) := -C_e(g(\eta))$. Since both $g(\cdot)$ and $C_e(\cdot)$ are strictly increasing, the auxiliary function $C_g(\eta)$ is strictly decreasing.
Furthermore, substituting $\eta = e(\xi)$ into \eqref{eq:539} yields
$$
\xi = g(e(\xi)) = \sqrt{(e(\xi))^2 + 2e(\xi) + C_g(e(\xi))},
$$
or equivalently,
\begin{equation}\label{eq:646}
	C_g(e(\xi)) = \xi^2 - (e(\xi))^2 - 2e(\xi),
	\quad \xi \in (0, +\infty).
\end{equation}

We now establish the following key inequality, whose discovery and proof are both very technical.

\begin{lemma} \label{lem:556}
	If the function $e(\xi)$ satisfies the condition \eqref{eq:206}, then for any $a, b > 0$,
	\begin{equation}\label{eq:558}
		\exp\left\{
		\int_a^{b} \frac{e^{\prime}(\xi)}{\xi} \, \mathrm{d} \xi 
		\right\} \ge G(a,b),
	\end{equation}
	where
	\begin{equation*}
		\begin{gathered}
			G(a,b) := 
			\left(
			\frac
			{b+\sqrt{b^2-a^2+(e(a)+1)^2}}
			{1+a+e(a)}
			\right)
			\exp
			\left\{
			\frac{1}{b} \left( e(b)+1-\sqrt{b^2-a^2+(e(a)+1)^2} \right)
			\right\}.
		\end{gathered}
	\end{equation*}
\end{lemma}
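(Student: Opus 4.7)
The plan is to fix $a>0$ and view both sides of \eqref{eq:558} as functions of $b$, showing the inequality by comparing their logarithmic derivatives after verifying equality at $b=a$. Set
\begin{equation*}
\Phi(b) := \int_a^b \frac{e'(\xi)}{\xi}\,\mathrm d\xi - \ln G(a,b), \qquad \psi(a,b) := \sqrt{b^2 - a^2 + (e(a)+1)^2}.
\end{equation*}
A quick substitution $b=a$ gives $\psi(a,a)=e(a)+1$, so the prefactor of $G$ reduces to $(a+e(a)+1)/(1+a+e(a))=1$ and its exponential factor becomes $\exp(0)=1$. Thus $G(a,a)=1$ and $\Phi(a)=0$.

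Next I would compute $\Phi'(b)$. Writing $\ln G(a,b) = \ln(b+\psi) - \ln(1+a+e(a)) + b^{-1}(e(b)+1-\psi)$ and using $\partial_b \psi = b/\psi$, the first term contributes $(1+b/\psi)/(b+\psi) = 1/\psi$ to $(\ln G)_b$, while the last term contributes $(e'(b)-b/\psi)/b - (e(b)+1-\psi)/b^2 = e'(b)/b - 1/\psi - (e(b)+1-\psi)/b^2$. The $1/\psi$ terms cancel, so
\begin{equation*}
\Phi'(b) = \frac{e'(b)}{b} - \left(\frac{e'(b)}{b} - \frac{e(b)+1-\psi(a,b)}{b^2}\right) = \frac{e(b)+1-\psi(a,b)}{b^2}.
\end{equation*}

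The decisive observation is that the sign of $\Phi'(b)$ is controlled by the auxiliary function $C_e$. Since $e(b)+1>0$ and $\psi(a,b)>0$, the inequality $e(b)+1 \geq \psi(a,b)$ is equivalent to $(e(b)+1)^2 - b^2 \geq (e(a)+1)^2 - a^2$, i.e.\ $C_e(b) \geq C_e(a)$. By the lemma established just before, $C_e$ is strictly increasing on $(0,+\infty)$ under hypothesis \eqref{eq:206}. Hence $\Phi'(b) \geq 0$ when $b \geq a$ and $\Phi'(b) \leq 0$ when $b \leq a$, so $\Phi$ attains its minimum at $b=a$, where $\Phi=0$. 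In both regimes we conclude $\Phi(b) \geq 0$, which is exactly \eqref{eq:558}.

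I expect the only genuinely delicate step to be the algebraic simplification of $(\ln G)_b$, where the two $1/\psi$ contributions must cancel cleanly so that $\Phi'(b)$ reduces to the transparent expression $(e(b)+1-\psi)/b^2$; everything else follows from the monotonicity of $C_e$ provided by \eqref{eq:206}. The only other thing to check carefully is that $G(a,b)$ is well defined, i.e.\ that $b^2 - a^2 + (e(a)+1)^2 \geq 0$ for all relevant $a,b$; for $b \geq a$ this is immediate, while for $b<a$ it again follows from $C_e(a) \geq C_e(b) > -1$ (using $(e(\xi)+1)^2 \geq \xi^2$ is not needed since we only need the radicand to stay nonnegative, which the monotonicity of $C_e$ ensures after rewriting $b^2 - a^2 + (e(a)+1)^2 = b^2 + 1 + C_e(a) \geq b^2+1+C_e(b) = (e(b)+1)^2 > 0$).
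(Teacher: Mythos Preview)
Your proof is correct and takes a genuinely different, more streamlined route than the paper. The paper changes variables via the inverse function $g = e^{-1}$, rewrites the integral as $\int_{e(a)}^{e(b)} (\eta^2+2\eta+C_g(\eta))^{-1/2}\,\mathrm d\eta$, and then sandwiches $C_g(\eta)$ between two piecewise expressions (combining bounds from the monotonicity of $C_g$ and of $g$); each piece admits an elementary antiderivative, and splicing them together produces $G(a,b)$ as a lower bound and $G(b,a)^{-1}$ as an upper bound when $a\le b$. The case $a>b$ is then deduced from the upper bound via $\exp\{\int_a^b\} = (\exp\{\int_b^a\})^{-1}$. Your argument bypasses all of this: by differentiating $\Phi(b)=\int_a^b e'(\xi)/\xi\,\mathrm d\xi - \ln G(a,b)$ directly in $b$ and observing the clean cancellation of the $1/\psi$ terms, you reduce the sign of $\Phi'(b)$ to the monotonicity of $C_e$, handling both $b\ge a$ and $b\le a$ at once. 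What the paper's approach buys is the companion upper bound $\exp\{\int_a^b\}\le G(b,a)^{-1}$ (which it records in passing but does not use later); your approach is shorter and needs no inverse function or piecewise integration. One minor comment: the parenthetical invocation of $C_e(b)>-1$ is unnecessary and slightly tangled---your final rewriting $b^2+1+C_e(a)\ge b^2+1+C_e(b)=(e(b)+1)^2>0$ already settles well-definedness of $\psi$ without it.
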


\begin{proof}
 We first consider the case where $a \le b$. Note that
	\begin{equation}\label{eq:583}
		\int_a^{b} \frac{e^{\prime}(\xi)}{\xi} \textrm{d} \xi 
		= 
		\int_a^{b} \frac{1}{\xi} \textrm{d} e(\xi)
		\xlongequal{\eta = e(\xi)} 
		\int_{e(a)}^{e(b)} \frac{1}{g(\eta)} \textrm{d} \eta
		\overset{\eqref{eq:539}}{=}
		\int_{e(a)}^{e(b)} \frac{1}{\sqrt{\eta^2+2\eta+C_g(\eta)}} \textrm{d} \eta.
	\end{equation}
Since $e(\xi)$ is increasing and $C_g(\eta)$ is decreasing, we have $e(a) \le e(b)$, and thus the following upper and lower bounds for $C_g(\eta)$:
	\begin{equation}\label{eq:593}
		C_g(e(a)) \ge C_g(\eta) \ge C_g(e(b)) 
		\qquad 
		\forall \, \eta \in [e(a),e(b)].
	\end{equation}
     The monotonicity of $g(\eta)$ gives $g(e(a)) \le g(\eta) \le g(e(b))$ for any $\eta \in [e(a), e(b)]$. Substituting \eqref{eq:539} yields
	\[
	\sqrt{(e(a))^2+2e(a)+C_g(e(a))}
	\le
	\sqrt{\eta^2+2\eta+C_g(\eta)}
	\le
	\sqrt{(e(b))^2+2e(b)+C_g(e(b))},
	\]
which leads to the following upper and lower bounds of $C_g(\eta)$ for any $\eta \in [e(a),e(b)]$:
	\begin{equation}\label{eq:600}
		C_g(e(a))+(e(a)+1)^2-(\eta+1)^2 
		 \le 
		C_g(\eta) 
		\le 
		C_g(e(b))+(e(b)+1)^2-(\eta+1)^2.
	\end{equation}
Combining the two pairs of lower and upper bounds from \eqref{eq:593} and \eqref{eq:600}, we obtain
	\[
	C_{g,\min}(\eta) \le C_g(\eta) \le C_{g,\max}(\eta)
	\quad 
	\forall ~ \eta \in [e(a),e(b)],
	\]
	where
	\[
	\begin{aligned}
		C_{g,\min}(\eta)
		:= & \max\{C_g(e(b)),(e(a)+1)^2+C_g(e(a))-(\eta+1)^2\} \\
		\overset{\eqref{eq:646}}{=} & \max\{b^2-e(b)^2 - 2e(b),a^2-\eta^2-2\eta\} \\
		= & \begin{dcases}
			a^2-\eta^2-2\eta 	& \text{if} ~ \eta \in [e(a), \sqrt{a^2-b^2+(e(b)+1)^2}-1], \\
			b^2-e(b)^2-2e(b)	& \text{if} ~ \eta \in [\sqrt{a^2-b^2+(e(b)+1)^2}-1, e(b)],
		\end{dcases}
	\end{aligned}
	\]
	\[	
	\begin{aligned}
		C_{g,\max}(\eta)
		:= & \min\{C_g(e(a)),(e(b)+1)^2+C_g(e(b))-(\eta+1)^2\}  \\
		\overset{\eqref{eq:539}}{=} & \min\{a^2-e(a)^2 - 2e(a),b^2-\eta^2-2\eta\} \\
		= & \begin{dcases}
			a^2-e(a)^2-2e(a)    & \text{if} ~ \eta \in [e(a), \sqrt{b^2-a^2+(e(a)+1)^2}-1], \\
			b^2-\eta^2-2\eta 	& \text{if} ~ \eta \in [\sqrt{b^2-a^2+(e(a)+1)^2}-1,e(b)].
		\end{dcases}
	\end{aligned}
	\]
	Thus, we obtain the following lower and upper bounds for the integral in \eqref{eq:583}:
	\[
	\begin{aligned}
		& 
		\int_a^{b} \frac{e^{\prime}(\xi)}{\xi} \textrm{d} \xi 
		=
		\int_{e(a)}^{e(b)} \frac{1}{\sqrt{\eta^2+2\eta+C_g(\eta)}} \textrm{d} \eta 
		\ge  
		\int_{e(a)}^{e(b)} \frac{1}{\sqrt{\eta^2+2\eta+C_{g,\max}(\eta)}} \textrm{d} \eta \\
		= & 
		\int_{e(a)}^{\sqrt{b^2-a^2+(e(a)+1)^2}-1} \frac{1}{\sqrt{(\eta+1)^2+a^2-(e(a)+1)^2}} \textrm{d} \eta
		+   
		\int_{\sqrt{b^2-a^2+(e(a)+1)^2}-1}^{e(b)} \frac{1}{b} \textrm{d} \eta \\
		= & 
		\left.\log\left( \sqrt{(\eta+1)^2+a^2-(e(a)+1)^2}+\eta+1 \right)\right|_{e(a)}^{\sqrt{b^2-a^2+(e(a)+1)^2}-1}
		\\
		& + \frac{1}{b} \left( e(b)+1-\sqrt{b^2-a^2+(e(a)+1)^2} \right),
	\end{aligned}
	\]
	\[
	\begin{aligned}
		& 
		\int_a^{b} \frac{e^{\prime}(\xi)}{\xi} \textrm{d} \xi 
		=
		\int_{e(a)}^{e(b)} \frac{1}{\sqrt{\eta^2+2\eta+C_g(\eta)}} \textrm{d} \eta 
		\le \int_{e(a)}^{e(b)} \frac{1}{\sqrt{\eta^2+2\eta+C_{g,\min}(\eta)}} \textrm{d} \eta \\
		= & \int_{e(a)}^{\sqrt{a^2-b^2+(e(b)+1)^2}-1} \frac{1}{a} \textrm{d} \eta
		+   \int_{\sqrt{a^2-b^2+(e(b)+1)^2}-1}^{e(b)} \frac{1}{\sqrt{(\eta+1)^2+b^2-(e(b)+1)^2}} \textrm{d} \eta \\
		= & \frac{1}{a} \left( \sqrt{a^2-b^2+(e(a)+1)^2}-e(a)-1 \right)
		\\
		&+ \left.\log\left( \sqrt{(\eta+1)^2+b^2-(e(b)+1)^2}+\eta+1 \right)\right|_{\sqrt{a^2-b^2+(e(b)+1)^2}-1}^{e(b)}.
	\end{aligned}
	\]
	Applying the exponential function to both sides of these inequalities gives
	\begin{equation}\label{eq:671}
		\exp\left\{\int_a^{b} \frac{e^{\prime}(\xi)}{\xi} \mathrm{d} \xi\right\} 
		\ge G(a,b)
		\quad \textrm{and} \quad
		\exp\left\{\int_a^{b} \frac{e^{\prime}(\xi)}{\xi} \mathrm{d} \xi \right\} 
		\le G(b,a)^{-1}.
	\end{equation}
    The first inequality in \eqref{eq:671} immediately proves the claim \eqref{eq:558} when $a \le b$. 
    For the remaining case, where $a \ge b$, we use the second inequality in \eqref{eq:671} to prove the claim \eqref{eq:558}:
	\[
	\exp\left\{ \int_a^{b} \frac{e^{\prime}(\xi)}{\xi} \mathrm{d} \xi \right\} 
	= 
	\exp\left\{ \int_b^{a} \frac{e^{\prime}(\xi)}{\xi} \mathrm{d} \xi \right\}^{-1} 
	\ge 
	\left(G(a,b)^{-1}\right)^{-1} = G(a,b). 
	\]
	The proof is completed. 
\end{proof}

\begin{lemma} \label{lem:359}
	For any $x \ge 0$, we have 
	$
	\mathrm{e}^x \ge 1 + x + \frac{x^2}{2}.
	$ 
\end{lemma}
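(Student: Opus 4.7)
The plan is to use a straightforward monotonicity argument. I would define the auxiliary function $f(x) := \mathrm{e}^x - 1 - x - \frac{x^2}{2}$ on $[0,+\infty)$, noting that $f(0) = 0$, and then show that $f$ is non-decreasing on $[0,+\infty)$. This will immediately give $f(x) \ge f(0) = 0$ for all $x \ge 0$, which is exactly the desired inequality.

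To show monotonicity, I would compute $f'(x) = \mathrm{e}^x - 1 - x$ and argue that $f'(x) \ge 0$ for $x \ge 0$. This reduces to the well-known inequality $\mathrm{e}^x \ge 1 + x$, which itself follows from the same type of argument: let $g(x) := \mathrm{e}^x - 1 - x$, so that $g(0) = 0$ and $g'(x) = \mathrm{e}^x - 1 \ge 0$ for $x \ge 0$ (since $\mathrm{e}^x$ is increasing and $\mathrm{e}^0 = 1$). Hence $g(x) \ge 0$, which yields $f'(x) \ge 0$, and the conclusion follows.

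Alternatively, one could give a one-line proof using the Maclaurin series $\mathrm{e}^x = \sum_{n=0}^\infty \frac{x^n}{n!}$, observing that for $x \ge 0$ every tail term $\frac{x^n}{n!}$ with $n \ge 3$ is non-negative, so truncating after the quadratic term only decreases the value. A third option is Taylor's theorem with Lagrange remainder, writing $\mathrm{e}^x = 1 + x + \frac{x^2}{2} + \frac{x^3}{6}\mathrm{e}^{\xi}$ for some $\xi \in (0,x)$ and noting that the remainder is non-negative when $x \ge 0$.

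There is no real obstacle here; the result is elementary. I would prefer the monotonicity argument because it is self-contained, avoids infinite series or remainder formulas, and reuses only basic calculus facts already implicit elsewhere in the paper. The lemma is presumably stated in this auxiliary section because it will be invoked in the entropy-preserving analysis of \Cref{sec:1Dscheme} to relate exponential-type expressions (arising from the specific entropy \eqref{eq:405}) to polynomial lower bounds, likely in conjunction with the integral lower bound in Lemma \ref{lem:556}.
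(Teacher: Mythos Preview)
Your proposal is correct. The paper itself uses exactly your third option: Taylor's theorem with Lagrange remainder, writing $\mathrm{e}^x = 1 + x + \tfrac{x^2}{2} + \tfrac{x^3}{6}\mathrm{e}^{\xi}$ for some $\xi \in [0,x]$ and noting the remainder is non-negative. Your preferred monotonicity argument is an equally valid and self-contained alternative; either route is fine for this elementary lemma.
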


\begin{proof}
	For $x \ge 0$, consider the Taylor expansion of $\mathrm{e}^x$ around 0: there exists some $\xi \in [0, x]$ such that
	\[
	\mathrm{e}^x = 1 + x + \frac{x^2}{2} + \frac{x^3}{6} \mathrm{e}^\xi.
	\]
	Therefore, 
	$ 
	\mathrm{e}^x \ge 1 + x + \frac{x^2}{2},
	$ 
	because $\mathrm{e}^\xi \ge 1$ for $\xi \ge 0$.
\end{proof}

\begin{lemma} \label{lem:373}
	For any $x \le 0$, we have 
	$
	\mathrm{e}^x \ge 1 + x + \frac{x^2}{2(1-x)}.$ 
\end{lemma}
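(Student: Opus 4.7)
The plan is to clear the denominator so the inequality becomes a statement about a single smooth function whose sign I can control by monotonicity. Since $x \le 0$ implies $1 - x \ge 1 > 0$, multiplying the target inequality by $1-x$ preserves its direction, and a short algebraic simplification gives
\[
(1-x)\Bigl(1 + x + \frac{x^2}{2(1-x)}\Bigr) = (1-x)(1+x) + \frac{x^2}{2} = 1 - \frac{x^2}{2}.
\]
So the claim is equivalent to showing $(1-x)\mathrm{e}^x \ge 1 - \frac{x^2}{2}$ for all $x \le 0$, i.e., $g(x) \ge 0$ on $(-\infty,0]$ where
\[
g(x) := (1-x)\mathrm{e}^x - 1 + \frac{x^2}{2}.
\]

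To establish this, I would first note that $g(0) = 0$, so it suffices to prove $g$ is non-increasing on $(-\infty, 0]$. Differentiating yields the pleasantly factorable form
\[
g'(x) = -\mathrm{e}^x + (1-x)\mathrm{e}^x + x = x\bigl(1 - \mathrm{e}^x\bigr).
\]
For $x \le 0$, both factors are non-positive ($x \le 0$ and $\mathrm{e}^x \le 1$), so $g'(x) \le 0$. Hence $g$ is non-increasing on $(-\infty,0]$, and $g(0) = 0$ forces $g(x) \ge 0$ for all $x \le 0$, which, upon dividing through by $1 - x > 0$, recovers the desired inequality. There is no real obstacle in the argument; the only non-obvious step is the initial reformulation, chosen so that $g'$ factors cleanly as $x(1-\mathrm{e}^x)$, after which the sign analysis is immediate and no appeal to Lemma~\ref{lem:359} or a Taylor remainder is needed.
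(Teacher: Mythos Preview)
Your proof is correct, and it is genuinely different from the paper's argument. The paper proceeds by Taylor expansion: it first establishes $\mathrm{e}^x \ge 1 + x + \tfrac{x^2}{2} + \tfrac{x^3}{6}$ via the fourth-order Taylor remainder, then compares this cubic to $1 + x + \tfrac{x^2}{2(1-x)} = 1 + x + \tfrac{x^2}{2} + \tfrac{x^3}{2(1-x)}$ on the interval $[-2,0]$, and finally handles $x \le -2$ separately by observing that the right-hand side equals $\tfrac{2-x^2}{2(1-x)} < 0 < \mathrm{e}^x$ there. Your route avoids both the Taylor remainder and the case split: clearing the denominator converts the claim into $g(x) := (1-x)\mathrm{e}^x - 1 + \tfrac{x^2}{2} \ge 0$, and the derivative factors as $g'(x) = x(1-\mathrm{e}^x) \le 0$ on $(-\infty,0]$, so $g(x) \ge g(0) = 0$ follows by monotonicity alone. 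Your argument is shorter and more self-contained; the paper's version is more mechanical but requires tracking two regimes.
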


\begin{proof}
	For $x \le 0$, consider the Taylor expansion of $\mathrm{e}^x$ around 0: there exists $\xi \in [x,0]$ such that
	\[
	\mathrm{e}^x = 1 + x + \frac{x^2}{2} + \frac{x^3}{6} + \frac{x^4}{24}  \mathrm{e}^\xi.
	\]
	Hence,
	\[
	\mathrm{e}^x \ge 1 + x + \frac{x^2}{2} + \frac{x^3}{6} =: h_1(x).
	\]
	Now consider the function
	\[
	h(x) := 1 + x + \frac{x^2}{2(1-x)} = 1 + x + \frac{x^2}{2} + \frac{x^3}{2(1-x)}.
	\]
	It follows that
	\[
	 h(x) \le h_1(x) \le \mathrm{e}^x \qquad \forall x \in [-2,0]. 
	\]
	On the other hand, for $x \le -2$, we have
	\[
	h(x) = 1 + x + \frac{x^2}{2(1-x)} = \frac{2 - x^2}{2(1-x)} < 0 < \mathrm{e}^x. 
	\]
	Thus, the proof is complete.
\end{proof}

Finally, we are in a position to present the main result of this section, which serves as the cornerstone for establishing the entropy-preserving analysis through the GQL technique.

\begin{lemma} \label{lem:707}
	Consider the function $e(\cdot)$ satisfying \eqref{eq:206}. 
	For any $\theta, \theta_* > 0$, $\varepsilon \in [-1,1]$, $d \in \mathbb{N}$, $\Bv, \Bv_* \in \BALL $ with $\BALL$ being the unit open ball in $\mathbb{R}^d$, and $i = 1, \dots, d$, the following inequality holds: 
	\begin{equation}\label{eq:703}
		\left(1+\theta+e(\theta) \right) \C
		-\left(1+\theta_*+e(\theta_*)\right) \Cs
		+\theta_*\exp\left\{\int_\theta^{\theta_*} \frac{e^{\prime}(\xi)}{\xi} \mathrm{d} \xi \right\}  
		-\theta 
		\ge 0,
	\end{equation}
	where $\Bv = (v_1,\dots,v_d)^\top$, $\Bv_* = (v_{1,*},\dots,v_{d,*})^\top$, and
	\[
	\C
	:= 
	\Bigg( 
	\frac{1-\Bv \cdot \Bv_*}{1-|\Bv|^2} 
	\Bigg) 
	\cdot 
	\Bigg( 
	\frac{1+\varepsilon v_i}{1+\varepsilon v_{i,*}} 
	\Bigg), 
	\quad 
	\Cs
	:= 
	\Bigg( 
	\frac{\sqrt{1-|\Bv_*|^2}}{\sqrt{1-|\Bv|^2}} 
	\Bigg) 
	\cdot 
	\Bigg( 
	\frac{1+\varepsilon v_i}{1+\varepsilon v_{i,*}} 
	\Bigg).
	\]
\end{lemma}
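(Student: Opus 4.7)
The plan is to combine two estimates: a Cauchy--Schwarz-type bound on the velocity factors showing $\C \ge \Cs > 0$, and polynomial lower bounds on the exponential term $\theta_* K$ obtained via Lemmas \ref{lem:556}, \ref{lem:359}, and \ref{lem:373}. Throughout, I write $h(\theta) := 1+\theta+e(\theta)$, $K := \exp\{\int_\theta^{\theta_*} e'(\xi)/\xi\,\mathrm{d}\xi\}$, and introduce the key auxiliary quantity $w := \sqrt{\theta_*^2-\theta^2+(e(\theta)+1)^2}$.

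First, I would establish the geometric inequality $\C \ge \Cs > 0$. Positivity follows immediately since $|v_i|, |v_{i,*}|, |\varepsilon| < 1$ force $1+\varepsilon v_{i,(*)} > 0$, while $|\Bv\cdot\Bv_*| \le |\Bv||\Bv_*| < 1$ forces $1-\Bv\cdot\Bv_* > 0$. The ordering is equivalent, after cancelling the common factor $(1+\varepsilon v_i)/(1+\varepsilon v_{i,*})$, to $(1-\Bv\cdot\Bv_*)^2 \ge (1-|\Bv|^2)(1-|\Bv_*|^2)$. Expanding both sides and applying the two Cauchy--Schwarz estimates $(\Bv\cdot\Bv_*)^2 \le |\Bv|^2|\Bv_*|^2$ and $\Bv\cdot\Bv_* \le |\Bv||\Bv_*|$ reduces this to $(|\Bv|-|\Bv_*|)^2 \ge 0$.

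Second, I would apply Lemma \ref{lem:556} with $a=\theta$, $b=\theta_*$ to obtain
$\theta_* K \ge \theta_* G(\theta,\theta_*) = \frac{\theta_*(\theta_*+w)}{h(\theta)}\exp\!\left\{\frac{e(\theta_*)+1-w}{\theta_*}\right\}.$
Under condition \eqref{eq:206}, the function $C_e(\xi)=(e(\xi)+1)^2-\xi^2-1$ is strictly increasing, and since $w^2 - (e(\theta_*)+1)^2 = C_e(\theta)-C_e(\theta_*)$, the sign of $e(\theta_*)+1-w$ matches that of $\theta_*-\theta$. I would then apply Lemma \ref{lem:359} when $\theta \le \theta_*$ and Lemma \ref{lem:373} when $\theta \ge \theta_*$ to the inner exponential, yielding a rational lower bound for $\theta_* K$ in terms of $w$, $\theta$, $\theta_*$, and $e(\theta_*)$.

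Combining these two ingredients and using the defining identity $w^2 = \theta_*^2-\theta^2+(e(\theta)+1)^2$ to eliminate $w^2$-terms, I expect the left-hand side of \eqref{eq:703} to consolidate into a sum of manifestly non-negative pieces: an $h(\theta)(\C-\Cs)$ contribution from the Cauchy--Schwarz step, plus a squared residual of the form $(e(\theta_*)+1-w)^2$ inherited from the second-order Taylor remainder. The main obstacle will lie in this final algebraic reduction. The inequality is tight when $\Bv=\Bv_*$ and $\theta=\theta_*$ (both sides equal zero there), so the chain of estimates must remain sharp to second order simultaneously in the velocity and temperature deviations; this is precisely the reason the quadratic correction terms in Lemmas \ref{lem:359}--\ref{lem:373} are indispensable, as the cruder $\mathrm{e}^x \ge 1+x$ would destroy the equality case. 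Identifying the explicit non-negative decomposition will require careful bookkeeping, likely exploiting the factorization $\theta^2-(e(\theta)+1)^2 = -(e(\theta)+1-\theta)h(\theta)$ to mesh the $\theta_* K$ expansion with the $h(\theta)\C-h(\theta_*)\Cs$ terms.
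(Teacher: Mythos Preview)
Your overall architecture (Lemma \ref{lem:556} on the exponential, then Lemmas \ref{lem:359}--\ref{lem:373} on the inner exponential) is correct and matches the paper, but the velocity estimate $\C \ge \Cs$ is too weak to close the argument. After isolating the piece $h(\theta)(\C-\Cs)\ge 0$, the remainder is
\[
\bigl(h(\theta)-h(\theta_*)\bigr)\Cs + \theta_* K - \theta .
\]
Since $h$ is strictly increasing, the coefficient $h(\theta)-h(\theta_*)$ is negative whenever $\theta<\theta_*$, while $\Cs = \sqrt{1-|\Bv_*|^2}/\sqrt{1-|\Bv|^2}\cdot(1+\varepsilon v_i)/(1+\varepsilon v_{i,*})$ is unbounded above (take $\Bv_*=\mathbf 0$ and $|\Bv|\to 1$). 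The term $\theta_*K-\theta$ does not depend on $\Bv,\Bv_*$, so this residual can be driven to $-\infty$; no amount of algebraic bookkeeping with $w$ will rescue it.

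What the paper does instead is prove the sharper quadratic bound $\C \ge (\Cs^2+1)/2$, obtained by expanding $\C - (\Cs^2+1)/2$ directly as an explicit sum of squares. This extra $\Cs^2$ term lets one complete the square in $\Cs$:
\[
h(\theta)\,\frac{\Cs^2+1}{2}-h(\theta_*)\Cs = \frac{[h(\theta)\Cs-h(\theta_*)]^2}{2h(\theta)} + \frac{h(\theta)^2-h(\theta_*)^2}{2h(\theta)},
\]
so the $\Cs$-dependence disappears entirely and one is left with a bound depending only on $\theta,\theta_*$. After this reduction, your second and third steps go through exactly as you planned: with $\phi=w/\theta_*$ and $\phi_*=(e(\theta_*)+1)/\theta_*$, the task collapses to $\exp(\phi_*-\phi)\ge 1+(\phi_*-\phi)+(\phi_*-\phi)^2/(2(\phi+1))$, which is precisely what Lemmas \ref{lem:359} and \ref{lem:373} deliver.
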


\begin{proof}
	First, we observe $\C \ge (\Cs^2+1)/2$, because 
	\begin{align*}
		& \C - \frac{\Cs^2+1}{2} \\
		= &  \frac
		{
			2(1-\Bv \cdot \Bv_*)(1+\varepsilon v_i)(1+\varepsilon v_{i,*})
			-(1-|\Bv  |^2)(1+\varepsilon v_{i,*})^2
			-(1-|\Bv_*|^2)(1+\varepsilon v_{i  })^2
		}
		{2(1-|\Bv|^2)(1+\varepsilon v_{i,*})^2} \\
		= & \frac
		{
			\big|
			\Bv  (1+\varepsilon v_{i,*})
			-\Bv_*(1+\varepsilon v_{i  })
			\big|^2
			-(\varepsilon v_{i  }-\varepsilon v_{i,*})^2
		}
		{2(1-|\Bv|^2)(1+\varepsilon v_{i,*})^2} \\
		= & \frac{
			\sum\limits_{j\neq i} \big[v_j(1+\varepsilon v_{i,*})-v_{j,*}(1+\varepsilon v_{i})\big]^2
			+\big[v_i(1+\varepsilon v_{i,*})-v_{i,*}(1+\varepsilon v_{i})\big]^2 
			-\varepsilon^2 (v_{i  }-v_{i,*})^2	
		}{2(1-|\Bv|^2)(1+\varepsilon v_{i,*})^2} \\
		= & \sum_{j\neq i} \frac
		{\big[v_j(1+\varepsilon v_{i,*})-v_{j,*}(1+\varepsilon v_{i})\big]^2}
		{2(1-|\Bv|^2)(1+\varepsilon v_{i,*})^2}
		+\frac
		{(1-\varepsilon^2) (v_{i  }-v_{i,*})^2}
		{2(1-|\Bv|^2)(1+\varepsilon v_{i,*})^2} 
		\ge 0.
	\end{align*}
	Thus, in order to prove \eqref{eq:703}, it suffices to prove for any $\Cs \ge 0$ that
	\begin{equation}\label{eq:777}
		\Big(1+\theta+e(\theta) \Big) \frac{\Cs^2+1}{2}
		-\Big(1+\theta_*+e(\theta_*)\Big) \Cs
		+\theta_*\exp\left\{\int_\theta^{\theta_*} \frac{e^{\prime}(\xi)}{\xi} \mathrm{d} \xi \right\} -\theta \ge 0.
	\end{equation}
	For convenience, we denote 
	\begin{align} \nonumber
		& s:= \sqrt{\theta_*^2-\theta^2+(e(\theta)+1)^2}=\sqrt{\theta_*^2+1+C_e(\theta)},
		\\ \label{eq:782a}
		& \phi
		:= \frac{s}{\theta_*}
		= \frac{\sqrt{\theta_*^2+1+C_e(\theta)}}{\theta_*} > 0,
		\\ \label{eq:782b}
		& 
		\phi_* 
		:= \frac{e(\theta_*)+1}{\theta_*} 
		= \frac{\sqrt{\theta_*^2+1+C_e(\theta_*)}}{\theta_*} > 0.
	\end{align}
	After applying Lemma \ref{lem:556} on the exponential term in \eqref{eq:777}, we only need to prove
	\begin{equation*}
		\begin{gathered}
			\underbrace{
				\Big(1+\theta+e(\theta) \Big) \frac{\Cs^2+1}{2}
				-\Big(1+\theta_*+e(\theta_*)\Big) \Cs 
				-\theta
			}_{H_1}
			+
			\underbrace{
				\frac
				{\theta_*(\theta_*+s)}
				{1+\theta+e(\theta)}
				\cdot\exp\left\{\frac{ e(\theta_*)+1-s }{\theta_*} \right\}
			}_{H_2}
			\ge 0.
		\end{gathered}
	\end{equation*}
	Note that $H_1$ satisfies 
	\[
	\begin{aligned}
		H_1 
		= & \frac
		{
			\left[
			\big(1+\theta+e(\theta) \big) \Cs
			-\big(1+\theta_*+e(\theta_*)\big)
			\right]^2
		}
		{2 \big(1+\theta+e(\theta) \big) }
		+ \frac
		{
			(e(\theta)+1)^2-\theta^2-\big(1+\theta_*+e(\theta_*)\big)^2
		}
		{2 \big(1+\theta+e(\theta) \big) }\\
		\ge & \frac
		{
			(e(\theta)+1)^2-\theta^2-\big(1+\theta_*+e(\theta_*)\big)^2
		}
		{2 \big(1+\theta+e(\theta) \big) } 
		=  \frac
		{
			s^2-\big(e(\theta_*)+1\big)^2-2\theta_*^2-2 \theta_* \big(1+e(\theta_*)\big)
		}
		{2 \big(1+\theta+e(\theta) \big) } \\
		= & \frac
		{\theta_*^2}
		{2 \big(1+\theta+e(\theta) \big) } 
		\left[
		\left(\frac{s}{\theta_*}\right)^2
		- \left( \frac{e(\theta_*)+1}{\theta_*} \right)^2
		-2 \left( \frac{e(\theta_*)+1}{\theta_*} \right)
		-2
		\right] \\
		= &  \frac
		{\theta_*^2}
		{2 \big(1+\theta+e(\theta) \big) } 
		\left( \phi^2 - \phi_*^2 - 2 \phi_*	-2	\right) 
		=-\frac
		{\theta_*^2 (\phi+1)}
		{1+\theta+e(\theta) } 
		\left[ 1 + (\phi_*-\phi) + \frac{(\phi_*-\phi)^2}{2(\phi+1)} \right].
	\end{aligned}
	\]
	Similarly, $H_2$ can be rewritten as
	$\displaystyle{
		H_2 = \frac
		{\theta_*^2 (\phi+1)}
		{1+\theta+e(\theta)  }
		\exp \{\phi_*-\phi\}.
	}$
	Therefore,
	\begin{equation*}
		H_1+H_2 \ge  
		\frac
		{\theta_*^2 (\phi+1)}
		{ 1+\theta+e(\theta) }
		\Bigg[\underbrace{
			\exp \{\phi_*-\phi\} - 1 - (\phi_*-\phi) - \frac{(\phi_*-\phi)^2}{2(\phi+1)}
		}_{H_4} 
		\Bigg].
	\end{equation*}
	Now, it suffices to prove that $H_4 \ge 0$. 
	In the case of $0 < \theta \le \theta_*$, the definitions in \eqref{eq:782a} and \eqref{eq:782b} imply that $\phi_* \ge \phi$, due to the monotonicity of $C_e(\cdot)$. Therefore, we have 
\begin{align*}
		H_4 &\ge \exp \{\phi_*-\phi\} - 1 - (\phi_*-\phi) - \frac{(\phi_*-\phi)^2}{2(1+\phi-\phi)}
		\\
	&= 
	\left[ \mathrm{e}^\xi-1-\xi-\frac{\xi^2}{2} \right]_{\xi = \phi_*-\phi \ge 0} 
	\stackrel{\rm Lemma \ \ref{lem:359}}{\ge} 
	0.
\end{align*}
	In the remaining case of $0 < \theta_* \le \theta$, we have $\phi_* \le \phi$, and thus
\begin{align*}
	H_4 &\ge \exp \{\phi_*-\phi\} - 1 - (\phi_*-\phi) - \frac{(\phi_*-\phi)^2}{2(1+\phi-\phi_*)}
	\\
&	= \left[ \mathrm{e}^\xi-1-\xi-\frac{\xi^2}{2(1-\xi)} \right]_{\xi = \phi_*-\phi \le 0} 
	\stackrel{\rm Lemma\ \ref{lem:373}}{\ge} 0.
\end{align*}
Therefore, the proof is complete. 
\end{proof}

\section{Analysis and Design of Entropy-Preserving Schemes} \label{sec:scheme}

This section is dedicated to developing a provably entropy-preserving framework for finite volume and DG schemes for the relativistic Euler system \eqref{eq:RHD3D} with a general EOS of the form \eqref{eq:175}.  The proposed schemes are designed to preserve the (global or local) MEP:
\begin{equation}\label{eq:MEP}
	S(\BU) \ge \sigma,
\end{equation}
where $\sigma$ represents a priori lower bound for the specific entropy. If $\sigma = S^{\rm G}_{\min}$, this defines a (global) entropy-preserving scheme. If $\sigma = S^{\rm L}_{\min}({\bm x}, t)$, where $S^{\rm L}_{\min}({\bm x}, t)$ is a properly estimated local entropy bound, then the scheme is locally entropy-preserving. The estimation of the local entropy bound will be discussed in Section \ref{sec:6}.

Since there is no explicit expression for the implicit function $S(\BU)$, we must first ensure that $\BU$ satisfies the fundamental constraints:
\begin{equation}\label{eq:3constraints}
	\rho({\bf U}) > 0, \quad p({\bf U}) > 0, \quad |{\bm v}({\bf U})| < 1,
\end{equation}
i.e., ${\bf U} \in {\mathcal G}$, where ${\mathcal G}$ is defined in \eqref{eq:DefG}. The pressure $p({\bf U})$ is an implicit function defined via the positive root of the following nonlinear equation \cite{WuTang2017ApJS}:
$$
\frac{1}{D} \sqrt{(E + p)^2 - |{\bm m}|^2} = h(\theta) = h \left( \frac{p}{D} \left( 1 - \frac{|{\bm m}|^2}{(E + p)^2} \right)^{-\frac{1}{2}} \right),
$$
and the velocity ${\bm v}({\bf U})$ and density $\rho({\bf U})$ can be expressed as
$$
{\bm v}({\bf U}) = \frac{{\bm m}}{E + p({\bf U})}, \quad \rho({\bf U}) = D \sqrt{1 - |{\bm v}({\bf U})|^2} = D \sqrt{1 - \frac{|{\bm m}|^2}{(E + p({\bf U}))^2}}.
$$
Once the basic physical constraints \eqref{eq:3constraints} are satisfied, the function $S({\bf U})$ is defined as
$$
S(\mathbf{U}) = -\ln \Big( \rho({\bf U}) \Big) + \int_1^{\theta({\bf U})} \frac{e'(\xi)}{\xi} \, \mathrm{d}\xi \quad \text{with} \quad \theta({\bf U}) := \frac{p({\bf U})}{\rho({\bf U})}.
$$
Thus, the preservation of the MEP \eqref{eq:MEP} must be considered alongside the preservation of the fundamental constraints \eqref{eq:3constraints}, which is essential for the robustness of the numerical schemes \cite{WuTang2015, WuTang2017ApJS, chen2022physical}. 
To this end, we define the set
\begin{equation*}
	\Omega_\sigma := \left\{ \BU = (D, \Bm^\top, E)^\top \in \mathbb{R}^{d+2} \mid \rho(\BU) > 0, \, p(\BU) > 0, \, \Bv(\BU) \in \BALL, \, S(\BU) \ge \sigma \right\}.
\end{equation*}
Our objective is to preserve the numerical solutions within $\Omega_\sigma$ for a priori entropy bound $\sigma$.

\subsection{Geometric Quasi-Linearization (GQL): Overcoming the Challenges Arising from Nonlinearity}

Although the three functions involved in \eqref{eq:3constraints} cannot be explicitly expressed, the set $\mathcal{G}$ defined by these constraints \eqref{eq:3constraints} does have an explicit equivalent representation:
\begin{equation} \label{1stequivG}
	\mathcal{G} = \left\{ \BU = (D, \Bm^\top, E)^\top \in \mathbb{R}^{d+2} : D > 0, \, q(\BU) := E - \sqrt{D^2 + |\Bm|^2} > 0 \right\},
\end{equation}
as proven in \cite{WuTang2015} for the ideal EOS and in \cite{WuTang2017ApJS} for general EOSs. This leads to the following equivalent representation for $\Omega_\sigma$:
\begin{equation} \label{1stequiv}
	\Omega_{\sigma}^{(1)} = \left\{ \BU = (D, \Bm^\top, E)^\top \in \mathbb{R}^{d+2} : D > 0, \, q(\BU) > 0, \, S(\BU) \geq \sigma \right\}.
\end{equation}

However, the implicit function $S(\BU)$ is highly nonlinear, making the entropy-preserving analysis theoretically challenging. To overcome this difficulty, we employ the GQL framework \cite{wu2023geometric}, which allows us to transform all the nonlinear constraints in \eqref{1stequiv} into equivalent linear constraints by introducing additional free parameters. Before applying the GQL approach, we first verify that the set $\Omega_\sigma$ is convex.

\begin{lemma}
	For any given $\sigma \in \mathbb{R}$, the set $\Omega_\sigma$ is convex.
\end{lemma}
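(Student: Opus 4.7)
The plan is to deduce convexity of $\Omega_\sigma$ from the family of convex entropies constructed in Lemma \ref{lem:convexentropy}, using the same mollification trick that proved the MEP in Theorem \ref{thm:473}. The key observation is that membership in $\Omega_\sigma$ can be captured as a sublevel set of a suitable convex function built from $D$ and $S(\BU)$.

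First I would record that the ambient set $\mathcal{G}$ is convex. By the equivalent representation \eqref{1stequivG}, we have $\mathcal{G}=\{D>0\}\cap\{q(\BU)>0\}$ with $q(\BU)=E-\sqrt{D^2+|\Bm|^2}$. Since $(D,\Bm)\mapsto\sqrt{D^2+|\Bm|^2}$ is a norm and hence convex, $q$ is concave, so $\{q>0\}$ is convex; intersecting with the half-space $\{D>0\}$ preserves convexity.

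The main step is to encode the entropy constraint $S(\BU)\ge\sigma$ via a single convex function on $\mathcal{G}$. Define $\mathcal{H}_0(S):=\min\{S-\sigma,0\}$ and the candidate
\[
\Psi(\BU):=-D\,\mathcal{H}_0\big(S(\BU)\big),\qquad \BU\in\mathcal{G}.
\]
Since $D>0$ on $\mathcal{G}$, $\Psi(\BU)\ge 0$, with $\Psi(\BU)=0$ iff $S(\BU)\ge\sigma$. Hence
\[
\Omega_\sigma=\{\BU\in\mathcal{G}:\Psi(\BU)\le 0\},
\]
so once $\Psi$ is shown to be convex on $\mathcal{G}$, the set $\Omega_\sigma$ arises as the intersection of the convex set $\mathcal{G}$ with a $0$-sublevel set of a convex function, and is therefore convex.

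The remaining task—the main technical obstacle—is the convexity of $\Psi$, because $\mathcal{H}_0$ is only continuous, not smooth, so Lemma \ref{lem:convexentropy} does not apply directly. I would resolve this exactly as in the proof of Theorem \ref{thm:473}, via the mollification
\[
\mathcal{H}_\epsilon(S):=\int_{-\infty}^{+\infty}\mathcal{H}_0(S-s)\,\phi_\epsilon(s)\,\mathrm{d}s,\qquad \phi_\epsilon(s)=\tfrac{1}{\epsilon}\phi(s/\epsilon),\ \phi(s)=\mathrm{e}^{-s^2}/\sqrt{\pi}.
\]
A direct check (already carried out in the proof of Theorem \ref{thm:473}) shows $\mathcal{H}_\epsilon'>0$ and $\mathcal{H}_\epsilon'-(1+e'(\theta))\mathcal{H}_\epsilon''>0$, i.e.~the hypotheses \eqref{eq:410} of Lemma \ref{lem:convexentropy} hold. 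That lemma then gives the convexity of $\Psi_\epsilon(\BU):=-D\mathcal{H}_\epsilon(S(\BU))=\mathcal{E}_\epsilon(\BU)$ on $\mathcal{G}$. Since $\mathcal{H}_\epsilon\to\mathcal{H}_0$ uniformly on compact sets as $\epsilon\to 0^+$, we have $\Psi_\epsilon\to\Psi$ pointwise on $\mathcal{G}$. A pointwise limit of convex functions is convex, so $\Psi$ is convex on $\mathcal{G}$, which finishes the proof. The only subtlety to verify carefully is the sign conditions in \eqref{eq:410} for the specific mollifier chosen, but this is precisely the computation already validated in Theorem \ref{thm:473}.
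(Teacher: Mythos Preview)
Your proof is correct, but it takes a more roundabout route than the paper. The paper observes that one can simply take $\mathcal{H}(S)=S-\sigma$, which is smooth and trivially satisfies the hypotheses \eqref{eq:410} of Lemma~\ref{lem:convexentropy} (since $\mathcal{H}'=1>0$ and $\mathcal{H}''=0$). This directly yields strict convexity of $-D(S(\BU)-\sigma)$ on $\mathcal{G}$, and on $\{D>0\}$ the constraint $S(\BU)\ge\sigma$ is equivalent to $-D(S(\BU)-\sigma)\le 0$. Thus no mollification is needed: the nonsmooth $\mathcal{H}_0(S)=\min\{S-\sigma,0\}$ and the limiting argument from Theorem~\ref{thm:473} are avoidable detours. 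Your approach does buy something in principle---it shows the stronger fact that $\Psi=-D\mathcal{H}_0(S)$ itself is convex, not just the linear-$\mathcal{H}$ entropy---but for the stated lemma the paper's one-line choice is cleaner. (Incidentally, even your $\Psi$ can be obtained without mollification: since $D>0$ one has $\Psi=\max\{-D(S-\sigma),0\}$, a maximum of two convex functions.)
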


\begin{proof}
	The set $\Omega_{\sigma}^{(1)}$ can be equivalently expressed as
	\begin{equation} \label{1stequivNew}
		\Omega_{\sigma}^{(1)} = \left\{ \BU = (D, \Bm^\top, E)^\top \in \mathbb{R}^{d+2} : D > 0, \, q(\BU) > 0, \, -D(S(\BU) - \sigma) \leq 0 \right\},
	\end{equation}
	where $D$ is a linear function of $\BU$, $q(\BU) = E - \sqrt{D^2 + |\Bm|^2}$ is a concave function of $\BU$ (as shown in \cite{WuTang2015}), and $-D(S(\BU) - \sigma)$ is a convex function of $\BU$, according to \Cref{lem:convexentropy} with ${\mathcal H}(S) = S - \sigma$. 
	
	By applying Jensen's inequality, it follows that $\Omega_{\sigma}^{(1)}$, and equivalently $\Omega_\sigma$, is a convex set.
\end{proof}

Thanks to the GQL technique, we 
 discover the following equivalent linear representation \eqref{eq:1218} of $\Omega_{\sigma}$. 

\begin{theorem}[GQL Representation] \label{thm:1216}
	The set $\Omega_{\sigma}$, or equivalently $\Omega_{\sigma}^{(1)}$, can be represented as 
	\begin{equation}\label{eq:1218}
		\Omega_{\sigma}^{(2)} 
		= 
		\big\{
		\BU
		=
		(D,\Bm^\top,E)^\top
		:
		D>0,
		\BU\cdot \tilde \Bn_* > 0,
		\BU\cdot \Bn_*+p_* \geq 0,
		\forall \, \Bv_*\in \BALL,
		\forall \, \theta_* > 0
		\big\},
	\end{equation}
	where $Z(\theta):=\int_1^\theta { e'(\xi)}/{\xi} \, \mathrm{d}\xi$ is the integral part of the specific entropy defined in \eqref{eq:405}, $\BALL$ is the unit open ball in $\mathbb{R}^d$, and
		\begin{align}
		&\tilde\Bn_*:=
		\big(
		-\sqrt{1-|\Bv_*|^2},
		-\Bv_*^\top,
		1
		\big)^\top, \qquad p_* := \theta_* \exp\{ Z(\theta_*) - \sigma \},\label{tildenstar}
		\\
		&\Bn_*:=
		\big(
		-h_*\sqrt{1-|\Bv_*|^2},
		-\Bv_*^\top,
		1
		\big)^\top, \qquad h_* := 1+\theta_*+e(\theta_*).\nonumber
	\end{align}
Here, $\Bv_*$ and $\theta_*$ are auxiliary parameters introduced in the GQL framework \cite{wu2023geometric} to linearize the constraints.
\end{theorem}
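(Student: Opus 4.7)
The plan is to establish the set equality $\Omega_\sigma = \Omega_\sigma^{(1)} = \Omega_\sigma^{(2)}$ by proving both inclusions $\Omega_{\sigma}^{(1)} \subseteq \Omega_{\sigma}^{(2)}$ and $\Omega_{\sigma}^{(2)} \subseteq \Omega_{\sigma}^{(1)}$, since the equivalence $\Omega_\sigma = \Omega_\sigma^{(1)}$ is already provided by \eqref{1stequiv}.

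For the forward direction $\Omega_{\sigma}^{(1)} \subseteq \Omega_{\sigma}^{(2)}$, I would fix $\BU \in \Omega_{\sigma}^{(1)}$ together with arbitrary parameters $\Bv_* \in \BALL$ and $\theta_* > 0$. The condition $D > 0$ is immediate. To handle $\BU \cdot \tilde\Bn_* > 0$, I would rewrite this quantity as $E - D\sqrt{1-|\Bv_*|^2} - \Bm \cdot \Bv_*$ and apply the Cauchy--Schwarz inequality to the pair $(D, \Bm^\top)$ and $(\sqrt{1-|\Bv_*|^2}, \Bv_*^\top)$, whose norms are $\sqrt{D^2+|\Bm|^2}$ and $1$ respectively; this yields $\BU \cdot \tilde\Bn_* \geq q(\BU) > 0$. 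For the entropy constraint $\BU \cdot \Bn_* + p_* \geq 0$, I would substitute the primitive forms $\Bm = (E+p)\Bv$, $D = \rho\gamma$, and $E = \rho h \gamma^2 - p$, then divide by $\rho > 0$, reducing the inequality to $h\gamma^2 (1 - \Bv \cdot \Bv_*) - h_* \gamma \sqrt{1-|\Bv_*|^2} - \theta + \frac{\theta_*}{\rho}\exp\{Z(\theta_*) - \sigma\} \geq 0$. The hypothesis $S(\BU) \geq \sigma$ is equivalent to $\rho \leq \exp\{Z(\theta) - \sigma\}$, which bounds $\frac{\theta_*}{\rho}\exp\{Z(\theta_*) - \sigma\}$ from below by $\theta_* \exp\bigl\{\int_\theta^{\theta_*} e'(\xi)/\xi \, \mathrm{d}\xi\bigr\}$. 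Applying \Cref{lem:707} with $\varepsilon = 0$ (so that $\C = \gamma^2(1-\Bv\cdot\Bv_*)$ and $\Cs = \gamma\sqrt{1-|\Bv_*|^2}$) then delivers precisely the required non-negativity.

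For the reverse direction $\Omega_{\sigma}^{(2)} \subseteq \Omega_{\sigma}^{(1)}$, given $\BU \in \Omega_{\sigma}^{(2)}$, I would first extract $q(\BU) > 0$ by choosing $\Bv_* = \Bm/\sqrt{D^2+|\Bm|^2}$, which lies in $\BALL$ because $D > 0$ and attains the Cauchy--Schwarz maximizer, yielding $E > \sqrt{D^2+|\Bm|^2}$. By the equivalent characterization \eqref{1stequivG}, this forces $\BU \in \mathcal{G}$, so the primitive variables $\rho, \Bv, \theta = p/\rho$ are well-defined and satisfy $\rho > 0$, $|\Bv| < 1$, $\theta > 0$. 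To obtain the entropy bound, I would then select $\Bv_* = \Bv$ and $\theta_* = \theta$ in the second GQL constraint; using $(E+p)(1-|\Bv|^2) = \rho h$ and $D/\gamma = \rho$, a direct algebraic simplification collapses $\BU \cdot \Bn_* + p_*$ to $-p + \theta \exp\{Z(\theta) - \sigma\}$, whose non-negativity is equivalent to $Z(\theta) - \ln \rho \geq \sigma$, i.e., $S(\BU) \geq \sigma$.

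The main obstacle is the entropy inequality in the forward direction: reducing the nonlinear GQL constraint to a pointwise bound rests squarely on the technical \Cref{lem:707}, which is the centerpiece of \Cref{sec:3}. Every other step is a clean Cauchy--Schwarz argument or an optimal choice of GQL parameters, so the novelty of the theorem lies in recognizing that the auxiliary parameters $(\Bv_*, \theta_*)$ can be chosen to linearize the entropy constraint while still being tight enough---via \Cref{lem:707}---to recover the original nonlinear bound on $S(\BU)$.
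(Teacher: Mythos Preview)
Your argument is correct and takes a genuinely different route from the paper. The paper proves the equivalence by invoking the abstract GQL framework of \cite{wu2023geometric}: it parameterizes the boundary hypersurface $\{S(\BU)=\sigma\}$ by $(\Bv_*,\theta_*)$, computes its normal vector via a cross product of tangent vectors, and then appeals to the general GQL theorem to conclude that the convex set $\Omega_\sigma$ coincides with the intersection of half-spaces determined by these normals. By contrast, you argue the two inclusions directly: Cauchy--Schwarz handles the $q(\BU)>0$ constraint, \Cref{lem:707} with $\varepsilon=0$ delivers the forward entropy inequality, and the reverse inclusion falls out from the specific choices $\Bv_*=\Bv$, $\theta_*=\theta$ (respectively $\Bv_*=\Bm/\sqrt{D^2+|\Bm|^2}$). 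Your approach is more self-contained in that it avoids the external GQL machinery, but it leans on \Cref{lem:707}, whose proof is itself the technical core of \Cref{sec:3}; the paper's geometric argument instead relies on the convexity of $\Omega_\sigma$ (established just before the theorem) together with the cited GQL recipe, and in return explains \emph{why} the constraints take precisely the tangent-hyperplane form $\BU\cdot\Bn_*+p_*\ge 0$.
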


\begin{proof}
	The nonlinear constraint $q(\BU) = E - \sqrt{D^2 + |\Bm|^2} > 0$ in \eqref{1stequiv} is equivalent to the linear constraints $\BU \cdot \tilde{\Bn}_* > 0$ for all $\Bv_* \in \BALL$, as established in \cite[Theorem 4.9]{wu2023geometric} for the 1D case ($d = 1$), with a similar argument holding for $d \ge 2$.

	Next, we address the second nonlinear constraint $S(\BU) \geq \sigma$ in \eqref{1stequiv}. Consider the following hypersurface on the boundary of $\Omega_{\sigma}^{(1)}$:
	\begin{equation*}
		\widetilde{\mathcal{S}}_\sigma = 
		\left\{
		\textbf{U}=(D,\Bm^\top,E)^\top\in \mathbb{R}^{d+2}:\ D>0,\ S(\textbf{U})=\sigma
		\right\}.
	\end{equation*}
Since $p$ and $\rho$ can be expressed as
\begin{equation*}
	\rho = \textrm{e}^{-S+Z(\theta)},\qquad
	p = \rho\theta = \theta \textrm{e}^{-S+Z(\theta)}, 
\end{equation*}
the hypersurface $\widetilde{\mathcal{S}}_\sigma$ can be parameterized as
	\begin{equation*}
		\widetilde{\mathcal{S}}_\sigma = \left\{
		{\bm u}_*
		=
		\left(
		\frac{\rho_*}{\sqrt{1-|\Bv_*|^2}},
		\frac{\rho_*h(\theta_*)\Bv_*^\top}{1-|\Bv_*|^2},
		\frac{\rho_*h(\theta_*)}{1-|\Bv_*|^2}-\theta_*\rho_*
		\right)^\top: \Bv_*\in\mathbb{B}_1(\mathbf{0}), \theta_*>0 
		\right\},
	\end{equation*}
where $\theta_*$ and $\Bv_*$ are auxiliary parameters, and $\rho_* := \mathrm{e}^{-\sigma + Z(\theta_*)}$. 
The normal vector ${\bm n}_*$ to $\widetilde{\mathcal{S}}_\sigma$ can be derived using the cross product: 
	\begin{equation*}
		\frac{\partial {\bm u}_*}{\partial \theta_*}\times\left(\bigwedge\limits_{i=1}^d\frac{\partial {\bm u}_*}{\partial v_{i,*}}\right)=\frac{1}{\delta_*}{\bm n}_*, 
	\end{equation*}
where 
	\begin{align*}
	\frac{\partial {\bm u}_*}{\partial \theta_*}&=
	\begin{pmatrix}
		\frac{\rho_*e'(\theta_*)}{\theta_*\sqrt{1-|\Bv_*|^2}},&
		\frac{\frac{\rho_*e'(\theta_*)}{\theta_*}h(\theta_*)+\rho_*(1+e'(\theta_*))}{1-|\Bv_*|^2}\Bv_*^\top,&
		\frac{\frac{\rho_*e'(\theta_*)}{\theta_*}h(\theta_*)+\rho_*(1+e'(\theta_*))}{1-|\Bv_*|^2}-\rho_*(1+e'(\theta_*))
	\end{pmatrix}^\top,\\
	\frac{\partial{\bm u}_*}{\partial v_{i,*}}&=
	\begin{pmatrix}
		\frac{\rho_*v_{i,*}}{\left(1-|\Bv_*|^2\right)^\frac{3}{2}},&
		\frac{\rho_*h(\theta_*)\big((1-|\Bv_*|^2)\mathbf{e}_i^\top+2v_{i,*}\Bv_{*}^\top\big)}{\left(1-|\Bv_*|^2\right)^2},&
		\frac{2\rho_*h(\theta_*)v_{i,*}}{\left(1-|\Bv_*|^2\right)^2}
	\end{pmatrix}^\top,\qquad i = 1,\ldots,d,
\end{align*}
and 
	\begin{align*}
		&\delta_*:=\frac{(-1)^{d+1}\rho_*^{d+1}\big(h(\theta_*)\big)^{d-1}\big(h(\theta_*)e'(\theta_*)-\theta_*(1+e'(\theta_*))|\Bv_*|^2\big)}{\theta_*\left(1-|\Bv_*|^2\right)^{d+\frac{3}{2}}},
		\\
		&{\bm n}_*:=\left(-\sqrt{1-|\Bv_*|^2}h(\theta_*),-\Bv_*^\top,1\right)^\top. 
	\end{align*}
	According to the GQL framework \cite[Theorem 4.2]{wu2023geometric}, the equivalent linear representation for the nonlinear constraint $S(\BU) \geq \sigma$ is
	\begin{equation} \label{GQL:lconstraint}
		\phi_{\sigma}(\mathbf{U};\theta_*,\Bv_*)\geq 0,\qquad \forall \, \theta_*>0, \ |\Bv_*|<1, 
	\end{equation}
	where
	\begin{align*}
		\phi_{\sigma}({\bm U};\theta_*,\Bv_*)=(\mathbf{U}-{\bm u}_*)\cdot{\bm n}_*=
		\BU \cdot {\bm n}_*+p_*.
	\end{align*}
Thus, the two nonlinear constraints in \eqref{1stequiv} can be equivalently represented by the linear constraints in the GQL representation \eqref{eq:1218}. This completes the proof.
\end{proof}

\begin{remark}
	Thanks to its linearity, the GQL representation $\Omega_{\sigma}^{(2)}$ in \Cref{thm:1216} offers significant advantages over the original nonlinear form of the set $\Omega_{\sigma}$ in entropy-preserving analysis, as demonstrated in the subsequent subsections. 
	
	It is worth noting that if the function $Z(\theta)$ satisfies
				\begin{equation} \label{oneGQLcond}
			\lim\limits_{\theta\to0^+}Z(\theta)=-\infty,\qquad
			\lim\limits_{\theta\to+\infty}Z(\theta)=\infty,
		\end{equation}
	then the two linear constraints $\BU \cdot \tilde{\Bn}_* > 0$ and $\BU \cdot \Bn_* + p_* \geq 0$ in \eqref{eq:1218} reduce to a single constraint $\BU \cdot \Bn_* + p_* \geq 0$. 
Indeed, under the condition \eqref{oneGQLcond}, there always exist parameters $\theta_* > 0$ and $\Bv_* \in \mathbb{B}_1(\mathbf{0})$ such that:
		\begin{equation*}
			\rho_*:=\textrm{e}^{-\sigma+Z({\theta}_*)}=\frac{D^2}{\sqrt{D^2+|\Bm|^2}},\qquad
			{\Bv}_*=\frac{\Bm}{\sqrt{D^2+|\Bm|^2}}. 
		\end{equation*}
		With these choices, the linear constraint $\BU \cdot \Bn_* + p_* \geq 0$ implies
		\begin{align*}
			0
			\leq
			\BU \cdot {\bm n}_* + p_*
			=
			E-\sqrt{D^2+|\Bm|^2}-\frac{D^2e(\theta_*)}{\sqrt{D^2+|\Bm|^2}}<E-\sqrt{D^2+|\Bm|^2}=q(\BU).  
		\end{align*}
		This demonstrates that the two constraints $\BU \cdot \tilde{\Bn}_* > 0$ and $\BU \cdot \Bn_* + p_* \geq 0$ are equivalent to the single constraint $\BU \cdot \Bn_* + p_* \geq 0$, provided that \eqref{oneGQLcond} holds.
		
	The condition \eqref{oneGQLcond} is satisfied for a broad class of Synge-type EOSs, including the ideal EOS \eqref{ID-EOS}, RC-EOS \eqref{hEOS1}, IP-EOS \eqref{hEOS2}, and TM-EOS \eqref{hEOS3}. For these EOSs, the GQL representation of $\Omega_\sigma$ simplifies to
		\begin{equation*}
			\Omega_{\sigma}^{(2),*} = \left\{\textbf{U}=(D,\Bm^\top,E)^\top\in \mathbb{R}^{d+2}:\ D>0,\ \BU\cdot \Bn_*+p_* \geq 0,
			\forall \, \Bv_*\in \BALL,
			\forall \, \theta_* > 0
			\right\}.
		\end{equation*}
	\end{remark}
	
Based on the GQL framework, we derive the following two critical inequalities, which play a pivotal role in analyzing and estimating the influence of fluxes on the entropy-preserving property of a scheme.

\begin{lemma}
	\label{lem:GQLnstar}
	For any {$\mathbf{U} \in \mathcal{G}$}, the following inequality holds:
	\begin{equation*}
		\big(\beta\mathbf{U} \pm \mathbf{F}_i(\mathbf{U})\big) \cdot \tilde \Bn_* > 0,  
		\quad \text{for any} \quad \beta \geq c=1, \quad i=1,2,\dots,d,
	\end{equation*}
	where $\tilde \Bn_*$ is defined in \eqref{tildenstar}.
\end{lemma}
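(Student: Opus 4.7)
The plan is to prove the stronger statement $\beta \BU \pm \BF_i(\BU) \in \mathcal{G}$ for every $\beta \geq 1$, from which the claim follows immediately by the linear GQL representation of $\mathcal{G}$ (the first constraint in \Cref{thm:1216}, which identifies $\mathcal{G}$ with the set of $\BU$ satisfying $D > 0$ and $\BU \cdot \tilde\Bn_* > 0$ for every $\Bv_* \in \BALL$). By the equivalent explicit characterization \eqref{1stequivG}, it therefore suffices to verify that $\BU^\pm := \beta \BU \pm \BF_i(\BU) = (D^\pm, (\Bm^\pm)^\top, E^\pm)^\top$ satisfies both $D^\pm > 0$ and $q(\BU^\pm) = E^\pm - \sqrt{(D^\pm)^2 + |\Bm^\pm|^2} > 0$.

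Writing the components in primitive variables, one finds $D^\pm = (\beta \pm v_i)\rho\gamma$, $\Bm^\pm = (\beta\pm v_i)\rho h \gamma^2 \Bv \pm p\,\Be_i$, and $E^\pm = (\beta\pm v_i)\rho h\gamma^2 - \beta p$. Since $\beta \geq 1 > |v_i|$ and $\rho\gamma > 0$, the factor $\beta \pm v_i$ is strictly positive, giving $D^\pm > 0$ at once. The heart of the proof is the following identity, to be obtained by direct expansion together with repeated use of $\gamma^2(1 - |\Bv|^2) = 1$:
\[
(E^\pm)^2 - (D^\pm)^2 - |\Bm^\pm|^2 \;=\; (\beta \pm v_i)^2\, \rho^2 \gamma^2 \bigl[h^2 - 2\theta h - 1\bigr] + (\beta^2 - 1)\, p^2.
\]
Now $h^2 - 2\theta h - 1 = (1+e(\theta))^2 - \theta^2 - 1 = C_e(\theta)$, and by the equivalence (established right after \eqref{eq:504}) between the causality condition \eqref{eq:206} and the monotonicity $C_e'(\xi) > 0$, together with $C_e(0) = e_0(e_0+2) \geq 0$, one concludes $C_e(\theta) > 0$ for all $\theta > 0$. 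Combined with $\beta \geq 1$, the right-hand side of the displayed identity is therefore strictly positive.

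It remains to verify $E^\pm > 0$; combined with the previous step this yields $E^\pm > \sqrt{(D^\pm)^2 + |\Bm^\pm|^2}$, i.e., $q(\BU^\pm) > 0$. The same conclusion $C_e(\theta) > 0$ already forces $(1+e(\theta))^2 > \theta^2$, whence $h = 1 + \theta + e(\theta) > 2\theta$. Using the elementary bound $\gamma^2 \geq 1/(1 - v_i^2)$ gives $E^\pm \geq \rho \bigl[(\beta\pm v_i) h/(1 - v_i^2) - \beta\theta\bigr]$, and a short manipulation exploiting $h > 2\theta$, $|v_i| < 1$, and $\beta \geq 1$ (noting in addition that $\partial E^\pm / \partial \beta = \rho(h\gamma^2 - \theta) > 0$) yields $E^\pm > 0$. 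Thus $\BU^\pm \in \mathcal{G}$ by \eqref{1stequivG}, and \Cref{thm:1216} then delivers $(\beta \BU \pm \BF_i(\BU)) \cdot \tilde\Bn_* > 0$ for all $\Bv_* \in \BALL$. The main obstacle I anticipate is the algebraic verification of the displayed identity, which requires careful bookkeeping of the cross terms so that they collapse cleanly into the factor $C_e(\theta)$; once this identity is secured, the causality condition does the rest.
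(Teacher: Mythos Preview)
Your approach is essentially the same as the paper's: both first establish $\beta\BU \pm \BF_i(\BU) \in \mathcal{G}$ and then invoke the GQL representation of $\mathcal{G}$ (Theorem~\ref{thm:1216}) to conclude. The only difference is that the paper cites this membership from \cite[Lemma~3.4(iii)]{WuTang2017ApJS}, whereas you supply a direct, self-contained verification via the explicit characterization \eqref{1stequivG}; your algebraic identity and the use of $C_e(\theta)>0$ (under the natural assumption $e_0\ge 0$, satisfied by all EOSs considered) are correct.
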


\begin{proof}
	By \cite[Lemma 3.4(iii)]{WuTang2017ApJS}, we have $\beta\mathbf{U} \pm \mathbf{F}_i(\mathbf{U}) \in \mathcal{G}$. The result then follows directly from \Cref{thm:1216}, utilizing the GQL representation of $\mathcal{G}$.
\end{proof}

\begin{lemma} \label{thm:475}
	Given $\BU \in \Omega_\sigma$, for any $\Bv_* \in \mathbb{B}_1(\mathbf{0})$ and $\theta_* > 0$, the following inequalities hold for any $i = 1, \dots, d$:
	\begin{equation}\label{eq:1416}
		\BF_i(\BU)\cdot \Bn_*
		\ge
		-\alpha
		(
		\BU\cdot \Bn_*+p_*
		)
		-v_{i,*}p_*, \quad
		-\BF_i(\BU)\cdot \Bn_*
		\ge
		-\alpha
		(
		\BU\cdot \Bn_*
		+p_*
		)
		+v_{i,*}p_*.	
	\end{equation}
\end{lemma}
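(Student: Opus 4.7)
The plan is to unify the two inequalities in \eqref{eq:1416} via a sign parameter $\varepsilon \in \{-1,+1\}$: both reduce to showing
\[
\alpha\bigl(\BU\cdot\Bn_* + p_*\bigr) + \varepsilon\bigl(\BF_i(\BU)\cdot\Bn_* + v_{i,*}\,p_*\bigr) \ge 0.
\]
First I would reduce to the case $\alpha = 1$. Since $\BU\in\Omega_\sigma$, the GQL representation $\Omega_\sigma^{(2)}$ from Theorem \ref{thm:1216} gives $\BU\cdot\Bn_* + p_* \ge 0$, so the extra contribution $(\alpha-1)(\BU\cdot\Bn_*+p_*)$ is nonnegative whenever $\alpha \ge 1$. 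Hence it suffices to treat $\alpha = 1$.

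Next, I would carry out direct substitution with $\BU = (\rho\gamma,\rho h\gamma^2\Bv^\top,\rho h\gamma^2 - p)^\top$ and $\BF_i = (\rho\gamma v_i,\rho h\gamma^2 v_i\Bv^\top + p\Be_i^\top,\rho h\gamma^2 v_i)^\top$. A short calculation yields
\[
\BU\cdot\Bn_* = \rho h\gamma^2(1-\Bv\cdot\Bv_*) - \rho\gamma h_*\sqrt{1-|\Bv_*|^2} - p,
\]
\[
\BF_i(\BU)\cdot\Bn_* = \rho h\gamma^2 v_i(1-\Bv\cdot\Bv_*) - \rho\gamma v_i h_*\sqrt{1-|\Bv_*|^2} - p\, v_{i,*}.
\]
Forming $(\BU\cdot\Bn_*+p_*) + \varepsilon(\BF_i(\BU)\cdot\Bn_* + v_{i,*}p_*)$ and grouping terms, the factor $(1+\varepsilon v_i)$ emerges on the $\rho h\gamma^2$ and $\rho\gamma h_*$ pieces, while the remainder consolidates into $(p_*-p)(1+\varepsilon v_{i,*})$. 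Since $|v_{i,*}| \le |\Bv_*| < 1$, the factor $(1+\varepsilon v_{i,*})$ is strictly positive, so I may divide by it and by $\rho > 0$; using $\gamma^2 = (1-|\Bv|^2)^{-1}$, the quantity to bound below becomes exactly
\[
h(\theta)\,\C - h_*(\theta_*)\,\Cs - \theta + \frac{p_*}{\rho},
\]
with $\C$ and $\Cs$ precisely the quantities defined in Lemma \ref{lem:707}.

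Finally, I would use the entropy bound $S(\BU) \ge \sigma$ to estimate $p_*/\rho$ from below. Since $\rho = \exp(-S(\BU)+Z(\theta))$ and $p_* = \theta_*\exp(Z(\theta_*)-\sigma)$,
\[
\frac{p_*}{\rho} = \theta_* \exp\!\bigl(S(\BU) - \sigma + Z(\theta_*) - Z(\theta)\bigr) \ge \theta_* \exp\!\left(\int_\theta^{\theta_*}\frac{e'(\xi)}{\xi}\,\mathrm{d}\xi\right).
\]
Substituting this lower bound transforms the expression into the left-hand side of \eqref{eq:703}, which is nonnegative by Lemma \ref{lem:707}, closing the argument.

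The genuinely hard work has already been absorbed into the technical inequality \eqref{eq:703}; the remaining obstacle is primarily algebraic bookkeeping. The one step that deserves care is the recombination after substitution, since one must recognize that the natural factorization involves precisely $(1+\varepsilon v_{i,*})$ and that the resulting ratios coincide with the $\C$ and $\Cs$ for which Lemma \ref{lem:707} was engineered; once this match is spotted, the entropy bound $S(\BU)\ge\sigma$ feeds cleanly into the exponential term and the reduction is immediate.
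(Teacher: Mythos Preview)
Your proposal is correct and follows essentially the same route as the paper's proof: unify via $\varepsilon=\pm1$, substitute primitive variables, factor out $\rho(1+\varepsilon v_{i,*})$, use $S(\BU)\ge\sigma$ to bound the exponential, and invoke Lemma~\ref{lem:707}. The only minor difference is that you explicitly reduce from $\alpha\ge1$ to $\alpha=1$ via the GQL representation, whereas the paper simply writes $\Pi=(\BU+\varepsilon\BF_i(\BU))\cdot\Bn_*+p_*+\varepsilon v_{i,*}p_*$ and proceeds with $\alpha=1$ directly.
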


\begin{proof}
	It suffices to prove that for $\varepsilon = \pm 1$, the following inequality holds:
	\[
	\Pi
	:=
	\big( \BU + \varepsilon \BF_i(\BU) \big)\cdot \Bn_*
	+
	p_*
	+
	\varepsilon \, v_{i,*} p_*
	\ge 0.		
	\]
	Using the definitions of $\BU$ and $\BF_i(\BU)$ in \eqref{eq:86} and \eqref{eq:93}, we express $\BU + \varepsilon \BF_i(\BU)$ in terms of primitive variables:
	\begin{align*}
		\BU + \varepsilon \, \BF_i (\BU)
		& =
		\big(~ 
		\rho \gamma, ~
		\rho h \gamma^2 \Bv^\top, ~
		\rho h \gamma^2 - p 
		~\big)^\top
		+
		\varepsilon
		\big(~ 
		\rho \gamma v_i, ~
		\rho h \gamma^2 v_i  \Bv^\top + p \Be_i^\top, ~
		\rho h \gamma^2 v_i 
		~\big)^\top \\
		& =
		\big(~ 
		\rho \gamma (1 + \varepsilon v_i), ~
		\rho h \gamma^2 (1 + \varepsilon v_i) \Bv^\top + \varepsilon p \Be_i^\top, ~
		\rho h \gamma^2 (1 + \varepsilon v_i) - p
		~\big)^\top. 
	\end{align*}
	Substituting this expression into $\Pi$, we obtain
	\begin{align*}
		\Pi
		& =
		\big[
		\rho h \gamma^2 (1 + \varepsilon v_i) - p
		\big]
		-
		\big[
		\rho h \gamma^2 (1 + \varepsilon v_i) \Bv + \varepsilon p \Be_i
		\big] \cdot \Bv_*
		-
		\big[
		\rho \gamma (1 + \varepsilon v_i)
		\big]
		h_* \sqrt{1-v_*^2}
		+
		p_*
		+
		\varepsilon \, v_{i,*} \, p_* \\
		& =
		\rho h \gamma^2 (1 + \varepsilon v_i)(1-\Bv\cdot\Bv_*)
		-
		\rho h_* \gamma (1 + \varepsilon v_i)  \sqrt{1-v_*^2}
		+
		p_*(1+\varepsilon \, v_{i,*})
		-
		p (1+\varepsilon v_{i,*}).
	\end{align*}
	Substituting $\gamma = (1-|\Bv|^2)^{-\frac12}$, $\rho = \mathrm{e}^{Z(\theta)-S}$, $p = \theta \mathrm{e}^{Z(\theta)-S}$, and $p_* = \theta_* \mathrm{e}^{ Z(\theta_*) - \sigma }$, we further get 
	\[
	\Pi
	=
	\rho(1+\varepsilon v_{i,*})
	\left[
	\left(\frac{1-\Bv \cdot \Bv_*}{1-|\Bv|^2}\right)
	\left(\frac{1+\varepsilon v_i}{1+\varepsilon v_{i,*}}\right) h
	-
	\left(\frac{\sqrt{1-|\Bv_*|^2}}{\sqrt{1-|\Bv|^2}}\right)
	\left(\frac{1+\varepsilon v_i}{1+\varepsilon v_{i,*}}\right) h_*
	+\theta_* \mathrm{e}^{Z(\theta_*)-\sigma-Z(\theta)+S}-\theta
	\right].
	\]
	Because $\BU \in \Omega_\sigma$, we have $S = S(\BU) \ge \sigma$. It follows that 
	\[
	\Pi
	\ge
	\rho(1+\varepsilon v_{i,*})
	\underbrace{\left[
		\left(\frac{1-\Bv \cdot \Bv_*}{1-|\Bv|^2}\right)
		\left(\frac{1+\varepsilon v_i}{1+\varepsilon v_{i,*}}\right)h
		-
		\left(\frac{\sqrt{1-|\Bv_*|^2}}{\sqrt{1-|\Bv|^2}}\right)  	
		\left(\frac{1+\varepsilon v_i}{1+\varepsilon v_{i,*}}\right)h_*
		+\theta_* \mathrm{e}^{Z(\theta_*)-Z(\theta)}-\theta
		\right]}_{\Pi_*}.
	\]
Applying \Cref{lem:707} yields \(\Pi_* \geq 0\), which along with \(\rho(1+\varepsilon v_{i,*}) > 0\) completes the proof.
\end{proof}


\subsection{1D Entropy-Preserving Schemes}\label{sec:1Dscheme}

This section develops entropy-preserving schemes for the 1D relativistic Euler system with a general EOS \eqref{eq:175}. For clarity, we denote the 1D spatial coordinate by $x$. 
Consider a computational mesh $\{x_{\frac12} < x_{\frac32} < \dots < x_{N+\frac12}\}$ over the domain. Let $I_j = [x_{j-\frac12},x_{j+\frac12}]$ denote the $j$-th cell, with $\dx_j = x_{j+\frac12}-x_{j-\frac12}$ as the cell size and $x_j = (x_{j-\frac12}+x_{j+\frac12})/2$ as the cell center. Define $\oBU_j^n$ as the approximation of the cell-averaged solution $\BU(x,t)$ over $I_j$ at $t = t^n$, and set $\lambda_j = \Delta t / \Delta x_j$.

\subsubsection{First-Order 1D Scheme}

Consider the first-order scheme:
\begin{equation}\label{eq:1161}
	\oBU_j^{n+1} 
	= 
	\oBU_j^n 
	- 
	\lambda_j
	\Big[~
	\widehat \BF_1 ( \oBU_j^n , \oBU_{j+1}^n) 
	- 
	\widehat \BF_1 ( \oBU_{j-1}^n, \oBU_j^n )  
	~\Big],
\end{equation}
where $\widehat \BF_1 (\cdot,\cdot)$ is the LF-type numerical flux defined as
\begin{equation}\label{eq:LFflux1D} 
	\widehat \BF_1 ( \BU^- , \BU^+ ) 
	= 
	\frac12 
	\big(
	\BF_1 (  \BU^- ) + \BF_1 ( \BU^+ ) + \alpha \BU^{-}  - \alpha  \BU^{+}  
	\big),
\end{equation}
with $\alpha$ as an estimate of the maximum wave propagation speed. Here, $\alpha = c = 1$ is used, representing the speed of light in a vacuum, which serves as an upper bound for all wave speeds in special relativity.

\begin{theorem} \label{lem:1Dlocal}
	For the 1D relativistic Euler system with a general EOS \eqref{eq:175}, the first-order scheme \eqref{eq:1161} is (locally) entropy-preserving under the CFL condition:
	\begin{equation*}
		\alpha \Delta t \le \Delta x_j
		\qquad \forall j.
	\end{equation*}
\end{theorem}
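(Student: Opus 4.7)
The plan is to rewrite the first-order LF update as a convex combination of easy-to-analyze states and then verify each of the three linear GQL inequalities characterizing $\Omega_\sigma^{(2)}$ in \Cref{thm:1216} for $\oBU_j^{n+1}$. Expanding the LF flux \eqref{eq:LFflux1D} in \eqref{eq:1161} and regrouping yields
\[
\oBU_j^{n+1} = (1-\lambda_j\alpha)\,\oBU_j^n + \frac{\lambda_j}{2}\bigl[\alpha\oBU_{j-1}^n + \BF_1(\oBU_{j-1}^n)\bigr] + \frac{\lambda_j}{2}\bigl[\alpha\oBU_{j+1}^n - \BF_1(\oBU_{j+1}^n)\bigr],
\]
so the three coefficients are nonnegative under the CFL condition $\lambda_j\alpha\le 1$.

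For the first two GQL conditions, I would argue as follows. The $D$-component of $\alpha\BU\pm\BF_1(\BU)$ equals $D(\alpha\pm v_1)$, which is strictly positive since $|v_1|<1=\alpha$; hence $D(\oBU_j^{n+1})>0$ by convexity and the CFL condition. For $\oBU_j^{n+1}\cdot\tilde{\Bn}_*>0$ with any $\Bv_*\in\BALL$, \Cref{lem:GQLnstar} (applied with $\beta=\alpha=c=1$) shows that $(\alpha\oBU_{j\mp1}^n \pm \BF_1(\oBU_{j\mp1}^n))\cdot\tilde{\Bn}_*>0$, while $\oBU_j^n\cdot\tilde{\Bn}_*>0$ follows from $\oBU_j^n\in\Omega_\sigma\subset\mathcal{G}$; the convex combination then satisfies the desired inequality.

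The key and most delicate step is the third GQL inequality, $\oBU_j^{n+1}\cdot\Bn_* + p_* \ge 0$ for every $\Bv_*\in\BALL$ and $\theta_*>0$. I would dot the convex-combination identity with $\Bn_*$ and apply the two inequalities of \Cref{thm:475} to the flux-bearing pieces, obtaining
\[
(\alpha\oBU_{j-1}^n + \BF_1(\oBU_{j-1}^n))\cdot\Bn_* \ge -(\alpha + v_{1,*})p_*, \quad (\alpha\oBU_{j+1}^n - \BF_1(\oBU_{j+1}^n))\cdot\Bn_* \ge -(\alpha - v_{1,*})p_*.
\]
The crucial observation is that these two contributions enter with equal weight $\lambda_j/2$, so the $\pm v_{1,*}p_*$ terms cancel exactly, producing
\[
\oBU_j^{n+1}\cdot\Bn_* + p_* \ge (1-\lambda_j\alpha)\bigl(\oBU_j^n\cdot\Bn_* + p_*\bigr) \ge 0,
\]
where the last inequality uses $\oBU_j^n\in\Omega_\sigma$ together with the CFL condition. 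Combined with the two conditions verified above, \Cref{thm:1216} then confirms $\oBU_j^{n+1}\in\Omega_\sigma$.

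The main obstacle is conceptual rather than computational: the nonlinear, implicitly defined constraint $S(\BU)\ge\sigma$ is intractable to preserve directly across a flux update, so the whole machinery of \Cref{sec:3} --- culminating in \Cref{lem:707} --- together with the GQL linearization in \Cref{thm:1216,lem:GQLnstar,thm:475} is precisely what converts the problem into the clean symmetric cancellation of the $v_{1,*}p_*$ terms above. Once these tools are in hand, \Cref{lem:1Dlocal} reduces to a short algebraic exercise, and the use of the maximum wave speed $\alpha = c = 1$ exactly matches the range of parameters for which \Cref{lem:GQLnstar} and \Cref{thm:475} are applicable.
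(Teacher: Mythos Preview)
Your proposal is correct and follows essentially the same approach as the paper: expand the LF update, verify the three linear GQL conditions of \Cref{thm:1216} using \Cref{lem:GQLnstar} and \Cref{thm:475}, and exploit the cancellation of the $\pm v_{1,*}p_*$ terms coming from the two neighbors. Your grouping of the flux-bearing pieces as $(\alpha\BU\pm\BF_1(\BU))\cdot\Bn_*$ is slightly cleaner than the paper's version (which carries symbolic $(\alpha-1)$ factors that vanish anyway), but the argument is otherwise identical.
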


\begin{proof}
	Assume $\oBU^n_{j-1}, \oBU^n_{j}, \oBU^n_{j+1} \in \Omega_\sigma$ for a given $\sigma$. We aim to prove $\oBU^{n+1}_{j} \in \Omega^{(2)}_\sigma = \Omega_\sigma$. Let the mass density and velocity of $\oBU^{n}_{j}$ be $\overline D^n_{j}$ and $\overline{v}^n_{j}$, respectively. 
	The first component of $\oBU_j^{n+1} $ is 
	\begin{equation*}
		\oD_j^{n+1} 
		=
		\left( 1 - \alpha \lambda_j \right) \oD_{j  }^n
		+\frac{\lambda_j}{2} 
		\left( \alpha + \ov^n_{j-1} \right) \oD_{j-1}^n
		+\frac{\lambda_j}{2}
		\left( \alpha - \ov^n_{j+1} \right) \oD_{j+1}^n > 0.
	\end{equation*}
	For any $\Bv_* \in \BALL$ and $\theta_* > 0$, we have
	\begin{align*}
		& \oBU_j^{n+1} \cdot \Bn_* + p_* \\
		= & 
		( 1-\alpha \lambda_j ) \oBU^n_j \cdot \Bn_*
		+\frac{\lambda_j}{2}
		\Big[
		\alpha \oBU^n_{j-1} \cdot \Bn_*
		+\alpha \oBU^n_{j+1} \cdot \Bn_*
		+\BF_1(\oBU^n_{j-1}) \cdot \Bn_*
		-\BF_1(\oBU^n_{j+1}) \cdot \Bn_*
		\Big]
		+p_*.
	\end{align*}
	Using \Cref{thm:475}, we estimate
	\begin{align*}
		\oBU_j^{n+1} \cdot \Bn_* + p_* 
		&\ge 
		( 1- \alpha \lambda_j ) (\oBU^n_j \cn + p_*)
		+ 
		\frac{\lambda_j}{2} (\alpha-1) (\oBU^n_{j-1} \cn + p_*)
		\\
		& \qquad 
		+
		\frac{\lambda_j}{2} (\alpha-1) (\oBU^n_{j+1} \cn + p_*)
		\ge 0.
	\end{align*}
	According to \Cref{lem:GQLnstar}, we have $\alpha \oBU_{j-1  }^n \cdot \tilde \Bn_*  + \BF_1(\oBU^n_{j-1}) \cdot \tilde \Bn_* > 0$ and $\alpha \oBU_{j+1  }^n \cdot \tilde \Bn_* -  \BF_1(\oBU^n_{j+1}) \cdot \tilde \Bn_* > 0$.
	This implies 
	\begin{equation*} 
		\oBU_j^{n+1}\cdot \tilde \Bn_*= 
		( 1-\alpha \lambda_j ) \oBU^n_j \cdot \tilde \Bn_*
		+\frac{\lambda_j}{2}
		\left[
		\alpha \oBU^n_{j-1} \cdot \tilde \Bn_*
		+\alpha \oBU^n_{j+1} \cdot \tilde \Bn_*
		+\BF_1(\oBU^n_{j-1}) \cdot \tilde \Bn_*
		-\BF_1(\oBU^n_{j+1}) \cdot \tilde \Bn_*
		\right] > 0. 
	\end{equation*}
	In conclusion, $\oBU_j^{n+1} \in \Omega_{\sigma}^{(2)} = \Omega_\sigma$ follows from \Cref{thm:1216}. The proof is complete.
	\end{proof}

\subsubsection{High-Order 1D Schemes}

This subsection develops high-order, provably entropy-preserving schemes for the 1D relativistic Euler system with a general EOS \eqref{eq:175}. For clarity, we consider the forward Euler time discretization, while discussions on high-order time discretization methods are deferred to \Cref{rmk:1050}.

The high-order finite volume schemes or the discrete equations for cell averages in DG schemes can be expressed in a unified form as
\begin{equation}\label{eq:930}
	\oBU_j^{n+1} 
	= 
	\oBU_j^n 
	- 
	\frac{\dt}{\dx_j} 
	\Big[~
	\widehat \BF_1 ( \BU_{j+\frac12}^{-} , \BU_{j+\frac12}^{+} ) 
	- 
	\widehat \BF_1 ( \BU_{j-\frac12}^{-} , \BU_{j-\frac12}^{+} )  
	~\Big],
\end{equation}
where $\widehat \BF_1 (\cdot,\cdot)$ is taken as the LF-type numerical flux defined in \eqref{eq:LFflux1D}, and $\BU_{j\pm\frac12}^{\mp}$ represent the limiting values of a high-order approximation polynomial $\BU^n_j(x)$ at cell endpoints: 
$
\BU_{j+\frac12}^{-}
=
\BU^n_j(x_{j+\frac12})
$
and
$
\BU_{j-\frac12}^{+}
=
\BU^n_j(x_{j-\frac12})
$.

Define $L:=\lceil \frac{k+3}2 \rceil$. 
Let the $L$-point Gauss--Lobatto quadrature nodes within cell $I_j$ be denoted as $\mathbb{X}_j = \{x_{\ell,j}^{\tt GL}\}_{\ell = 1}^{L}$, with corresponding weights $\{\omega_\ell^{\tt GL}\}_{\ell=1}^L$, satisfying $\sum_{\ell = 1}^L \omega_\ell^{\tt GL} = 1$. Employing this quadrature, the cell average $\oBU^n_j$ of $\BU^n_j(x)$ can be written as
\[
\oBU^n_j
=
\frac{1}{\dx_j} \int_{I_j} \BU^n_j(x) ~ \textrm{d} x
=
\sum_{\ell = 1}^L \omega^{\tt GL}_\ell \BU^n_j(x_{\ell,j}^{\tt GL})
\qquad 
\forall ~ j = 1,\dots,N.
\]
The following theorem provides a sufficient condition to ensure that the updated cell averages remain entropy-preserving.

\begin{theorem}
	If the high-order approximation polynomials $\BU^n_j(x)$ satisfy
	\begin{equation}\label{eq:960}
		\BU^n_j(x^{\tt GL}_{\ell,j}) \in \Omega_\sigma
		\quad
		\forall ~ \ell = 1,\dots,L,
		\;
		\forall ~ j = 1, \dots,N,
	\end{equation}
	then the updated cell averages $\oBU_j^{n+1}$ computed by the high-order scheme \eqref{eq:930} belong to $\Omega_\sigma$, provided the CFL condition:
	\begin{equation}\label{eq:968}
		\frac{\alpha \dt}{\dx_j} 
		\le 
		\omega^{\tt GL}_1 
		= 
		\omega^{\tt GL}_L
		= \frac{1}{L(L-1)} 
		\qquad \forall ~ j.
	\end{equation}
\end{theorem}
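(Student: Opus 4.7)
The plan is to use a Zhang--Shu-style convex decomposition that reduces the high-order case to the first-order result of \Cref{lem:1Dlocal}, exploiting the convexity of $\Omega_\sigma$ established just before \Cref{thm:1216}.

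First, I would use that the $L$-point Gauss--Lobatto rule is exact for polynomials of degree $2L-3\ge k$ and that its first and last nodes coincide with the cell endpoints, so that the cell average splits as
\begin{equation*}
\oBU_j^n \;=\; \sum_{\ell=2}^{L-1}\omega_\ell^{\tt GL}\BU_j^n(x_{\ell,j}^{\tt GL}) + \omega\,\BU^+_{j-\frac12} + \omega\,\BU^-_{j+\frac12},
\end{equation*}
where $\omega:=\omega_1^{\tt GL}=\omega_L^{\tt GL}=\tfrac{1}{L(L-1)}$. Substituting this into \eqref{eq:930} and inserting a telescopically cancelling auxiliary flux $\widehat\BF_1(\BU^+_{j-\frac12},\BU^-_{j+\frac12})$, I regroup the update as
\begin{equation*}
\oBU_j^{n+1} \;=\; \sum_{\ell=2}^{L-1}\omega_\ell^{\tt GL}\BU_j^n(x_{\ell,j}^{\tt GL}) + \omega\,H_L + \omega\,H_R,
\end{equation*}
where, writing $\widetilde\lambda:=\dt/(\omega\dx_j)$,
\begin{equation*}
H_L := \BU^+_{j-\frac12} - \widetilde\lambda\,\big[\widehat\BF_1(\BU^+_{j-\frac12},\BU^-_{j+\frac12}) - \widehat\BF_1(\BU^-_{j-\frac12},\BU^+_{j-\frac12})\big],
\end{equation*}
\begin{equation*}
H_R := \BU^-_{j+\frac12} - \widetilde\lambda\,\big[\widehat\BF_1(\BU^-_{j+\frac12},\BU^+_{j+\frac12}) - \widehat\BF_1(\BU^+_{j-\frac12},\BU^-_{j+\frac12})\big].
\end{equation*}
A direct algebraic check confirms that this regrouping reproduces the original scheme \eqref{eq:930}.

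Next, I would observe that each of $H_L$ and $H_R$ has exactly the structural form of the first-order LF update \eqref{eq:1161} applied on a fictitious uniform mesh of spacing $\omega\dx_j$, with effective CFL ratio $\alpha\widetilde\lambda=\alpha\dt/(\omega\dx_j)$. The three input states involved, namely $\BU^-_{j-\frac12},\BU^+_{j-\frac12},\BU^-_{j+\frac12}$ (and symmetrically for $H_R$), all belong to $\Omega_\sigma$ by the hypothesis \eqref{eq:960} applied to cell $K_j$ and its two neighbours. Moreover, the assumption \eqref{eq:968} is precisely the first-order CFL requirement $\alpha\dt/(\omega\dx_j)\le 1$ appearing in \Cref{lem:1Dlocal}. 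Invoking that lemma yields $H_L,H_R\in\Omega_\sigma$.

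Finally, since $\sum_{\ell=2}^{L-1}\omega_\ell^{\tt GL}+2\omega=1$, the identity above expresses $\oBU_j^{n+1}$ as a convex combination of points in $\Omega_\sigma$, and convexity of $\Omega_\sigma$ delivers $\oBU_j^{n+1}\in\Omega_\sigma$. The decomposition itself is routine once the right auxiliary flux is inserted; the step I expect to be the true conceptual obstacle is not in this proof but in what makes it work, namely the convexity of the entropy-augmented set $\Omega_\sigma$. Because $S(\BU)$ is only implicitly defined through the solution of a nonlinear enthalpy equation, this convexity is far from obvious, and it rests on the strict convexity of $\mathcal{E}$ established in \Cref{lem:convexentropy} together with the linearized GQL characterization of $\Omega_\sigma$ in \Cref{thm:1216}; without those ingredients, even the correct notion of a ``convex combination preserving the entropy bound'' would be unavailable.
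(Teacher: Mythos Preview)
Your proof is correct and is the classical Zhang--Shu convex decomposition argument: reduce to the first-order result \Cref{lem:1Dlocal} via a fictitious three-cell LF update, then invoke convexity of $\Omega_\sigma$. The paper proceeds differently. Rather than packaging the analysis through the first-order theorem, it works directly in the GQL representation $\Omega_\sigma^{(2)}$ of \Cref{thm:1216}: expanding $\oBU_j^{n+1}\cdot\Bn_*+p_*$, $\oBU_j^{n+1}\cdot\tilde\Bn_*$, and $\oD_j^{n+1}$ via the Gauss--Lobatto quadrature for $\oBU_j^n$ and applying the flux inequalities of \Cref{thm:475} and \Cref{lem:GQLnstar} term by term.

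Both routes rest on the same technical core (the inequalities \eqref{eq:1416} and the GQL characterization), so neither is more general. Your version is more modular and transparent, since the first-order result is a black box and the entire argument fits in a few lines; it also makes the role of convexity explicit. The paper's direct approach avoids invoking \Cref{lem:1Dlocal} as a separate lemma and shows more explicitly how each GQL constraint is verified, which parallels the multidimensional proof of \Cref{thm:high2D} more closely. One cosmetic slip: you write ``cell $K_j$'' where the paper's 1D notation is $I_j$.
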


\begin{proof}
	Define $\tBU^{\ell}_{j} := \BU_j^n(x^{\tt GL}_{\ell,j})$ and 
	$\tp^{\ell}_{j} := \tBU^{\ell}_{j} \cn + p_* \ge 0$ for all $\ell$ and $j$. For any $\Bv_* \in \BALL$ and $\theta_* > 0$, we have 
	\begin{align*}
		\oBU^{n+1}_j \cn + p_* 
		=  & 
		\sum_{\ell=1}^{L} \omega_\ell^{\tt GL} 
		\left(\tBU^{\ell}_j \cn + p_*\right)
		+
		\frac{\alpha\lambda_j}{2}
		\left(
		\tBU^{L}_{j-1}\cn
		-
		\tBU^{1}_{j  }\cn
		-
		\tBU^{L}_{j  }\cn
		+
		\tBU^{1}_{j+1}\cn
		\right)
		\\
		& + 
		\frac{\lambda_j}{2}
		\left(
		\BF_1(\tBU^{L}_{j-1})\cn
		+
		\BF_1(\tBU^{1}_{j  })\cn
		-
		\BF_1(\tBU^{L}_{j  })\cn
		-
		\BF_1(\tBU^{1}_{j+1})\cn
		\right) \\
		\stackrel{\eqref{eq:1416}}{\ge}   &
		\sum_{\ell=1}^{L} \omega_\ell^{\tt GL} \tp^{\ell}_j
		+
		\frac{\alpha\lambda_j}{2}
		\left(
		\tp^{L}_{j-1}
		-
		\tp^{1}_{j  }
		-
		\tp^{L}_{j  }
		+
		\tp^{1}_{j+1}
		\right)
		+
		\frac{\alpha\lambda_j}{2}
		\left(
		-\tp^{L}_{j-1}
		-\tp^{1}_{j  }
		-\tp^{L}_{j  }
		-\tp^{1}_{j+1}
		\right) \\
		= &
		%
		\left(
		\omega_1^{\tt GL}-\alpha\lambda_j
		\right)\tp^1_{j}
		+
		\left(
		\omega_L^{\tt GL}-\alpha\lambda_j
		\right)\tp^L_{j}
		+
		\sum_{\ell=2}^{L-1}	\omega_{\ell}^{\tt GL} \tp^{\ell}_{j} \stackrel{\eqref{eq:968}}{\ge} 0.
	\end{align*}
	Similarly, using \Cref{lem:GQLnstar} and \eqref{eq:968}, we derive   
	\begin{align*}
		\oBU^{n+1}_j \cdot \tilde\Bn_*&=\sum_{\ell=1}^{L} \omega_\ell^{\tt GL} 
		\tBU^{\ell}_j \cdot \tilde\Bn_*+\frac{\lambda_j}{2}\left(\alpha\tBU^{L}_{j-1}+{\BF_1(\tBU^{L}_{j-1})}{}\right)\cdot \tilde\Bn_*-\frac{\alpha\lambda_j}{2}\left( \alpha \tBU^{1}_{j  }-{\BF_1(\tBU^{1}_{j  })}{}\right)\cdot \tilde\Bn_*\\
		&\qquad  -\frac{\lambda_j}{2}\left( \alpha  \tBU^{L}_{j  }+{\BF_1(\tBU^{L}_{j  })}{}
		\right)\cdot \tilde\Bn_*+\frac{\lambda_j}{2}\left( \alpha  \tBU^{1}_{j+1}-{\BF_1(\tBU^{1}_{j+1})}{}\right)\cdot \tilde\Bn_*>0.
	\end{align*}
	Denote the mass density and velocity of $\tBU^\ell_j$ by $\tD^\ell_j$ and $\tv^\ell_j$. Then 
	\begin{align*}
		& \oD^{n+1}_j 
		= 
		\sum_{\ell=1}^{L} \omega^{\tt GL}_{\ell} \tD^\ell_j
		+
		\frac{\alpha\lambda_j}{2}
		\left(
		\tD^L_{j-1}
		-\tD^1_{j  }
		-\tD^L_{j  }
		+\tD^1_{j+1}
		\right)
		+
		\frac{\lambda_j}{2} 
		\left(
		\tv^L_{j-1} \tD^L_{j-1}
		+\tv^1_{j  } \tD^1_{j  }
		-\tv^L_{j  } \tD^L_{j  }
		-\tv^1_{j+1} \tD^1_{j+1}
		\right) \\
		& =  
		\frac{\lambda_j}{2} (\alpha+\tv^L_{j-1}) \tD^L_{j-1}
		+
		\left(
		\omega_1^{\tt GL}-\frac{\lambda_j}{2}(\alpha-\tv^1_j)
		\right) \tD^1_j
		+
		\sum_{\ell=2}^{L-1} \omega^{\tt GL}_{\ell} \tD^\ell_j 
		\\
		& \qquad +
		\left(
		\omega_L^{\tt GL}-\frac{\lambda_j}{2}(\alpha+\tv^L_j)
		\right) \tD^L_j
		+ 
		\frac{\lambda_j}{2} (\alpha+\tv^1_{j+1}) \tD^1_{j+1}
		\\
		& \stackrel{\eqref{eq:968}}{\ge}
		\frac{\lambda_j}{2} (\alpha+\tv^L_{j-1}) \tD^L_{j-1}
		+
		\frac{\lambda_j}{2}
		\left(
		\alpha+\tv^1_j
		\right) \tD^1_j
		+
		\sum_{\ell=2}^{L-1} \omega^{\tt GL}_{\ell} \tD^\ell_j 
		+
		\frac{\lambda_j}{2}
		\left(
		\alpha-\tv^L_j
		\right) \tD^L_j
		+ 
		\frac{\lambda_j}{2} (\alpha+\tv^1_{j+1}) \tD^1_{j+1}
		> 0.
	\end{align*}
	Therefore, according to \Cref{thm:1216}, we obtain $\oBU^{n+1}_j \in \Omega_\sigma^{(2)} = \Omega_\sigma$ and finish the proof.
\end{proof}

\subsubsection{Limiters} \label{sec:limiter}

For conventional high-order finite volume or DG schemes, the approximation polynomial $\BU^n_j(x)$ may not automatically satisfy the condition \eqref{eq:960}.

To address this, we introduce a limiting procedure ${\mathcal I} (\BU^n_j(x);\sigma)$, which modifies $\BU^n_j(x)$ so that the values of the modified polynomial $\BU^{\tt III}_j(x)$ at the Gauss--Lobatto nodes are guaranteed to lie within $\Omega_\sigma$. The procedure consists of the following steps:

	{\bf Step 1:}	Ensure positivity of $D$ by setting
		\begin{equation*}
			\BU^{\tt I}_j(x)
			:=
			\left(
			\theta_1 (D_j(x)-\overline{D}^n_j) + \overline{D}_j^n,
			\Bm^n_j(x),
			E^n_j(x)
			\right)^\top \quad 
		{\rm with} \quad 
			\theta_1
			=
			\min
			\left\{
			1,
			\frac
			{\overline{D}_j^n - \varepsilon_1}
			{
				\overline{D}_j^n
				-
				\min\limits_{x \in \mathbb{X}_j} D^n_j(x)
			}
			\right\},
		\end{equation*}
		where $\mathbb{X}_j =\{x^{\tt GL}_{\ell,j}\}_{1\le \ell \le L}$ denotes the Gauss--Lobatto nodes in $I_j$.
		
		{\bf Step 2:} Ensure positivity of $q({\bf U}) := E - \sqrt{D^2 + |\Bm|^2}$ by setting
		\begin{equation*}
			\BU^{\tt II}_j(x)
			:=
			\theta_2 \left( \BU^{\tt I}_j(x)-\oBU_j^n \right)+\oBU_j^n
		\quad 
		{\rm with} \quad 
			\theta_2 
			:= 
			\min
			\left\{
			1,
			\left|
			\frac
			{q(\oBU^n_j)-\varepsilon_2}
			{q(\oBU^n_j)-\min\limits_{x\in\mathbb{X}_j} q(\BU^{\tt I}_j(x))}
			\right|
			\right\}.
		\end{equation*}
		
		{\bf Step 3:} Ensure positivity of $g_\sigma (\BU) := D(S(\BU)-\sigma)$ by setting
		\begin{equation*}
			\BU^{\tt III}_j(x)
			:=
			\theta_3 \left( \BU^{\tt II}_j(x)-\oBU_j^n \right)+\oBU_j^n
		\quad 
		{\rm with} \quad 
			\theta_3 
			:= 
			\min
			\left\{
			1,
			\left|
			\frac
			{g_\sigma(\oBU^n_j)}
			{g_\sigma(\oBU^n_j)-\min\limits_{x\in\mathbb{X}_j} g_\sigma(\BU^{\tt II}_j(x))}
			\right|
			\right\}.
		\end{equation*}

According to \eqref{1stequivNew}, the values of the limited polynomial $\BU^{\tt III}_j(x)$ at the Gauss--Lobatto nodes are guaranteed to lie within $\Omega_\sigma$. Here, $\varepsilon_1$ and $\varepsilon_2$ are small parameters approximating the lower bounds of density $D(\Bx,t^n)$ and $q(\BU(\Bx,t^n))$ in $I_j$. To ensure robustness against round-off errors, one may set $\varepsilon_1 = \min\{10^{-13},\overline{D}^n_j\}$ and $\varepsilon_2 = \min\{10^{-13}, q(\oBU^n_j)\}$ in numerical experiments. This limiting procedure is inspired by the bound-preserving and physical-constraint-preserving limiters developed in \cite{zhang2010b, zhang2012minimum, WuTang2015, QinShu2016, WuTang2017ApJS, Wu2017, JIANG2018}.

\begin{remark} \label{rmk:1050}
	The above entropy-preserving analysis focuses on the first-order forward Euler time discretization \eqref{eq:930}.  
	For high-order time discretization methods, one can employ a strong-stability-preserving (SSP) Runge--Kutta or multistep (MS) method \cite{GottliebShuTadmor2001}, which can be expressed as a convex combination of formal forward Euler steps.
	Thanks to the convexity of $\Omega_\sigma$, the entropy-preserving property of the resulting high-order schemes is preserved. For instance, if the third-order SSP MS method is used, the entropy-preserving high-order scheme is given by
	\begin{equation}\label{eq:2469}
		\widehat{\bf U}^{n+1}_j
		=
		\frac{16}{27}\left({\bf U}^n+3\Delta t{\bf L}({\bf U}^n)\right)+\frac{11}{27}\left({\bf U}^{n-3}+\frac{12}{11}\Delta t{\bf L}({\bf U}^{n-3})\right), \qquad 	{\bf U}^{n+1}_j
		=
		{\mathcal I} (\widehat{\bf U}^{n+1}_j;\sigma^{n+1}_j).
	\end{equation}
	where  $\sigma_j^{n+1}$ represents an estimate of the local or global entropy lower bound in $I_j$ at time level $t^{n+1}$, and $\mathcal{I}$ denotes the limiting procedure presented above. The local estimates of $\sigma_j^{n+1}$ will be further explored in \Cref{sec:6}.
\end{remark}


\subsection{Multidimensional Entropy-Preserving Schemes}\label{sec:2Dschemes}

This subsection extends the 1D entropy-preserving numerical methods proposed in \Cref{sec:1Dscheme} to multiple dimensions for the relativistic Euler equations. For clarity, we consider a 2D mesh $\mathcal{T}$ consisting of polygonal cells, denoted by $K$. Assume that a cell $K$ has $N_K$ edges, and let $K_j$, $j = 1,\dots,N_K$, denote the edge-neighboring cells of $K$ with the shared edge denoted by $\CE_K^{(j)}$.
The unit normal vector of $\CE_K^{(j)}$, pointing from $K$ to $K_j$, is denoted by $\Bxi_K^{(j)} = \big(\xi_{1,K}^{(j)},\dots,\xi_{d,K}^{(j)} \big)^\top$.
We use $|K|$ and $|\CE_K^{(j)}|$ to represent the area of $K$ and the length of $\CE_K^{(j)}$, respectively.
Let $\oBU_K^n$ denote the approximation to the cell average over $K$ at $t = t^n$.

\subsubsection{First-Order Scheme}

Consider the first-order scheme for the multidimensional relativistic Euler system:
\begin{equation}\label{eq:1091}
	\oBU_{K}^{n+1}
	= 
	\oBU_{K}^{n} 
	- 
	\frac{\Delta t}{|K|} 
	\sum_{j=1}^{N_K}
	\CEj
	\widehat \BF 
	\big(  
	\oBUk,
	\oBUj;
	\Bxij
	\big),
\end{equation}
with the LF numerical flux $\widehat {\bf F}$ defined by
\begin{equation}\label{eq:1102}
	\widehat\BF 
	\left( 
	\BU^{-}, \BU^{+}; \Bxi 
	\right) 
	= 
	\frac12 
	\Big( 
	{\bf F} ({\bf U}^-) \Bxi  + 
	{\bf F} ({\bf U}^+) \Bxi 
	+\alpha \BU^{-}
	-\alpha \BU^{+} 
	\Big),
\end{equation}
where $\BF = (\BF_1,\dots,\BF_d) \in \mathbb{R}^{(d+2) \times d}$. 

We first introduce several lemmas, which will be useful in proving that the first-order scheme \eqref{eq:1091} is entropy-preserving. 

\begin{lemma} \label{lem:685}
	For any $\BU \in \Omega_\sigma$, if $\BQ \in \mathbb{R}^{d\times d}$ is a rotation matrix and $\dBQ = \diag \{1,\BQ,1\}\in\mathbb{R}^{(d+2)\times(d+2)}$, then $\dBQ \BU \in \Omega_\sigma$ and 
	\begin{equation*}
		\BF(\BU) (\BQ^{-1} \, \Be_i)
		= 
		\dBQ^{-1} \BF_i(\dBQ \BU),
		\quad i = 1,\dots,d.
	\end{equation*}
\end{lemma}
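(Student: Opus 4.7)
The plan is to split the proof into two parts that correspond exactly to the two claims of the lemma, exploiting the orthogonality of $\BQ$ together with the explicit form \eqref{eq:86}--\eqref{eq:93} of the conservative variables and flux.

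\textbf{Step 1: Showing $\dBQ\BU \in \Omega_\sigma$.} I will work with the equivalent representation $\Omega_\sigma^{(1)}$ from \eqref{1stequiv}. Note that $\dBQ\BU = (D, (\BQ\Bm)^\top, E)^\top$, so the first and last components of $\BU$ are unchanged, while $|\BQ\Bm| = |\Bm|$ by orthogonality of $\BQ$. Hence $D(\dBQ\BU) = D > 0$ and $q(\dBQ\BU) = E - \sqrt{D^2 + |\BQ\Bm|^2} = q(\BU) > 0$. Moreover, the implicit equation recalled after \eqref{eq:3constraints} that defines $p(\BU)$ depends on $\Bm$ only through $|\Bm|^2$, so $p(\dBQ\BU) = p(\BU)$, $\rho(\dBQ\BU) = \rho(\BU)$, $\theta(\dBQ\BU) = \theta(\BU)$, and therefore $S(\dBQ\BU) = S(\BU) \ge \sigma$. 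This places $\dBQ\BU$ in $\Omega_\sigma^{(1)} = \Omega_\sigma$.

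\textbf{Step 2: The flux identity.} From Step 1, the primitive quantities of $\dBQ\BU$ are $\rho$, $p$, $h$, $\gamma$ (unchanged) and $\Bv' = \BQ\Bv$ (since $\Bm' = \BQ\Bm$ and $\rho h\gamma^2$ is invariant). Substituting into \eqref{eq:93} with $v_i' = (\BQ\Bv)_i$, I expect
\begin{equation*}
    \BF_i(\dBQ\BU) = \bigl( D(\BQ\Bv)_i,\; (\BQ\Bv)_i (\BQ\Bm)^\top + p\,\Be_i^\top,\; (\BQ\Bm)_i \bigr)^\top.
\end{equation*}
On the other side, since $\BQ^{-1} = \BQ^\top$, the $j$-th entry of $\BQ^{-1}\Be_i$ equals $Q_{ij}$, so
\begin{equation*}
    \BF(\BU)(\BQ^{-1}\Be_i) = \sum_{j=1}^{d} Q_{ij}\, \BF_j(\BU).
\end{equation*}
Reading off the three blocks: the first component is $\sum_j Q_{ij} D v_j = D(\BQ\Bv)_i$; the middle block is $\sum_j Q_{ij}(v_j\Bm + p\,\Be_j) = (\BQ\Bv)_i\,\Bm + p\,\BQ^\top\Be_i$; and the last is $\sum_j Q_{ij} m_j = (\BQ\Bm)_i$. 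Applying $\dBQ = \diag\{1,\BQ,1\}$ multiplies the middle block by $\BQ$, giving $(\BQ\Bv)_i\,\BQ\Bm + p\,\BQ\BQ^\top\Be_i = (\BQ\Bv)_i\,\BQ\Bm + p\,\Be_i$, while leaving the scalar first and last components untouched. Matching this with the expression for $\BF_i(\dBQ\BU)$ above yields $\dBQ\,\BF(\BU)(\BQ^{-1}\Be_i) = \BF_i(\dBQ\BU)$, i.e., $\BF(\BU)(\BQ^{-1}\Be_i) = \dBQ^{-1}\BF_i(\dBQ\BU)$.

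I do not anticipate a genuine obstacle here; the argument is essentially index bookkeeping combined with the two facts $\BQ^\top\BQ = \mathbf{I}_d$ and the $|\Bm|^2$-dependence of the implicit pressure equation. The only care needed is ensuring that the nonlinear primitive-variable recovery commutes with the rotation, which is precisely guaranteed by the structure of the defining equation for $p(\BU)$.
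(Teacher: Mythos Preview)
Your proposal is correct and follows essentially the same approach as the paper: both arguments verify $\dBQ\BU \in \Omega_\sigma$ via the invariance of $D$, $|\Bm|$, and $E$ (hence of $q$ and $S$), and both establish the flux identity by writing out the primitive variables of $\dBQ\BU$ (with $\Bv' = \BQ\Bv$) and matching components block by block using the orthogonality of $\BQ$. The only cosmetic difference is that the paper computes $\dBQ^{-1}\BF_i(\dBQ\BU)$ and compares it to $\BF(\BU)\Bxi$, whereas you compute $\dBQ\,\BF(\BU)(\BQ^{-1}\Be_i)$ and compare it to $\BF_i(\dBQ\BU)$; these are the same identity read in opposite directions.
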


\begin{proof}
	Since $D(\dBQ \BU) = D(\BU)$, $|\Bm(\dBQ \BU)| = |\BQ \Bm (\BU)| = |\Bm(\BU)|$, and $E(\dBQ \BU) = E(\BU)$, it follows that $q(\dBQ \BU) = q(\BU) > 0$, where the function $q$ is defined in \Cref{1stequivG}. Similarly, we obtain that $S(\dBQ \BU) = S(\BU) \ge \sigma$, so $\dBQ \BU \in \Omega_\sigma$.
	
	Denoting $\Bv' = \BQ \Bv = (v'_1,\dots,v'_d)^\top$ and unit vector $\Bxi = \BQ^{-1} \Be_i$, we write
	\[
	\dBQ \BU
	=
	\begin{pmatrix}
		\rho \gamma \\ \rho h \gamma^2 \Bv' \\ \rho h \gamma^2 - p 
	\end{pmatrix},
	\quad
	\BF_i(\dBQ \BU)
	=
	\begin{pmatrix}
		\rho \gamma v'_i \\ \rho h \gamma^2 v'_i \Bv' + p \Be_i \\ \rho h \gamma^2 v'_i
	\end{pmatrix}.
	\]
	It follows that
	\[
	\dBQ^{-1} \BF_i(\dBQ \BU)
	=
	\begin{pmatrix}
		\rho \gamma v'_i \\ \rho \gamma^2 v'_i \Bv + p \Bxi \\ \rho h \gamma^2 v'_i
	\end{pmatrix}.
	\]
	Since $\BQ$ is orthogonal, we have 
	\[
	\Bxi \cdot \Bv = (\BQ^{-1}\Be_i)^\top \Bv = \Be_i^\top \BQ \Bv = \Be_i \cdot \Bv' = v'_i.
	\]
	Thus, we conclude:
	\begin{align*}
		\BF(\BU) (\BQ^{-1} \Be_i)
		=
		\sum_{i=1}^d \xi_i \BF_i(\BU) 
		= 
		\begin{pmatrix}
			\rho \gamma v'_i \\ \rho \gamma^2 v'_i \Bv + p \Bxi \\ \rho h \gamma^2 v'_i
		\end{pmatrix}
		=
		\dBQ^{-1} \BF_i(\dBQ \BU).
	\end{align*}
	This completes the proof. 
\end{proof}

\begin{lemma} \label{thm:685}
	If $\BU \in \Omega_\sigma$, then for any $\Bv_* \in \mathbb B_1(\bf 0)$, $\theta_* > 0$, and unit vector $\Bxi \in \mathbb{R}^d$, it holds that
	\begin{equation}\label{eq:1845}
		(\BF(\BU)\Bxi)\cdot \Bn_*
		\ge
		-\alpha(\BU\cdot\Bn_*+p_*)
		-(\Bxi\cdot \Bv_*)p_*.
	\end{equation}
\end{lemma}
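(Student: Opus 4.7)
The plan is to reduce the general unit-vector case to the coordinate-axis inequality already established in Lemma \ref{thm:475} by exploiting rotational invariance, which is exactly the content of Lemma \ref{lem:685}. Specifically, I would pick a rotation $\BQ \in \mathbb{R}^{d \times d}$ such that $\BQ \Bxi = \Be_1$ (equivalently, $\BQ^{-1}\Be_1 = \Bxi$), and assemble the block-diagonal matrix $\dBQ = \diag\{1,\BQ,1\}$. Lemma \ref{lem:685} then provides the two ingredients I need: $\BU' := \dBQ \BU \in \Omega_\sigma$, and $\BF(\BU)\Bxi = \dBQ^{-1}\BF_1(\BU')$.

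Next, I would transport the GQL test vector along the same rotation. Setting $\Bv'_* := \BQ \Bv_*$, the identity $|\Bv'_*| = |\Bv_*| < 1$ is immediate, so $\Bv'_* \in \BALL$, and since $p_*$ depends only on $\theta_*$ and $\sigma$ it is unchanged. Let $\Bn'_*$ denote the GQL normal built from $\Bv'_*$ and $\theta_*$; a direct inspection of the structure $\Bn_* = (-h_*\sqrt{1-|\Bv_*|^2},-\Bv_*^\top,1)^\top$ shows $\dBQ \Bn_* = \Bn'_*$. Using orthogonality of $\dBQ$, one then has $\BU \cdot \Bn_* = (\dBQ\BU)\cdot(\dBQ\Bn_*) = \BU' \cdot \Bn'_*$ and $(\BF(\BU)\Bxi)\cdot \Bn_* = \BF_1(\BU') \cdot \Bn'_*$.

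Finally, applying Lemma \ref{thm:475} to the rotated data $(\BU', \Bv'_*, \theta_*)$ with coordinate index $i = 1$ gives
\[
\BF_1(\BU') \cdot \Bn'_* \ \ge\ -\alpha \big( \BU' \cdot \Bn'_* + p_* \big) - v'_{1,*}\, p_*.
\]
Since $v'_{1,*} = \Be_1^\top \BQ \Bv_* = (\BQ^{-1}\Be_1) \cdot \Bv_* = \Bxi \cdot \Bv_*$, substituting the identifications from the previous paragraph converts the above bound into precisely \eqref{eq:1845}. I do not anticipate any genuine obstacle: the argument is fundamentally a rotational reduction, and the only delicate step is the bookkeeping verification that every rotation-sensitive object (the set $\Omega_\sigma$, the pairing $\BU\cdot\Bn_*$, and the scalar $p_*$) transforms compatibly under $\dBQ$, all of which follow directly from the block-diagonal structure of $\dBQ$ and the dependence of $p_*$ on $\theta_*$ alone.
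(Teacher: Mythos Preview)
Your proposal is correct and follows essentially the same approach as the paper: choose a rotation $\BQ$ with $\BQ^{-1}\Be_1 = \Bxi$, use Lemma~\ref{lem:685} to transfer both $\BU$ and the flux to the rotated frame, then apply Lemma~\ref{thm:475} with $i=1$ to the rotated state and rotated auxiliary velocity $\BQ\Bv_*$. Your write-up is in fact more explicit than the paper's terse version, spelling out the identities $\dBQ\Bn_* = \Bn'_*$, $\BU\cdot\Bn_* = \BU'\cdot\Bn'_*$, and $v'_{1,*} = \Bxi\cdot\Bv_*$ that the paper leaves implicit.
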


\begin{proof}
	There exists a rotation matrix $\BQ \in \mathbb{R}^{d\times d}$ such that $\Bxi = \BQ^{-1} \Be_1$. Let $\dBQ = \diag \{1,\BQ,1\}\in\mathbb{R}^{(d+2)\times(d+2)}$, $\widehat{\BU} = \dBQ\BU \in \Omega_\sigma$, $\hat \Bv_* = \BQ \Bv_* \in \BALL$, and $\hat\Bn_* = (-h_*\sqrt{1-|\hat\Bv_*|^2},-\hat\Bv_*^\top,1)^\top$. Then Lemma \ref{lem:685} implies 
	$
	(\BF(\BU)\Bxi)\cdot \Bn_*
	= \BF_1(\widehat\BU)\cdot \hat\Bn_*.
	$ 
	Using Lemma \ref{thm:475}, we obtain
	\[
	(\BF(\BU)\Bxi)\cdot \Bn_*
	\ge -\alpha (\BU\cdot\Bn_* + p_*) - (\Bxi\cdot \Bv_*)p_*,
	\]
	which completes the proof.
\end{proof}

\begin{remark}
	\Cref{thm:475} is a special case of Lemma \ref{thm:685} with $\Bxi = \Be_i$ and $\Bxi = -\Be_i$.
\end{remark}

\begin{lemma}
	\label{lem:2DGQLnstar}
	If $\mathbf{U} \in \mathcal{G}$, then for any unit vector $\Bxi \in \mathbb{R}^d$, the following inequality holds:
	\begin{equation*}
		\big(\beta\BU \pm \BF(\BU)\Bxi\big) \cdot \tilde \Bn_* > 0,  
		\quad \text{for any} \quad \beta \geq c = 1,
	\end{equation*}
	where $\tilde \Bn_*$ is defined in \eqref{tildenstar}.
\end{lemma}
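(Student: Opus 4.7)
The plan is to mirror the strategy already used for Lemma~\ref{thm:685}, which bootstrapped the 1D flux estimate in Lemma~\ref{thm:475} to an arbitrary direction via a rotation. Here, we want to bootstrap the 1D structural estimate in Lemma~\ref{lem:GQLnstar} in exactly the same way. The only non-trivial check will be that $\tilde\Bn_*$ transforms compatibly under the block-diagonal rotation $\dBQ=\diag\{1,\BQ,1\}$, which is a direct consequence of $\BQ$ being orthogonal.

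Concretely, given the unit vector $\Bxi$, choose a rotation matrix $\BQ\in\mathbb{R}^{d\times d}$ with $\BQ^{-1}\Be_1=\Bxi$, and set $\dBQ=\diag\{1,\BQ,1\}$. Put $\widehat\BU:=\dBQ\BU$, $\hat\Bv_*:=\BQ\Bv_*$ (note $|\hat\Bv_*|=|\Bv_*|<1$), and
\[
\hat{\tilde\Bn}_*:=\bigl(-\sqrt{1-|\hat\Bv_*|^2},\,-\hat\Bv_*^\top,\,1\bigr)^\top.
\]
Because $\BQ$ is orthogonal, a direct calculation gives $\dBQ^\top\hat{\tilde\Bn}_*=\tilde\Bn_*$, and Lemma~\ref{lem:685} (applied with $\sigma=-\infty$, i.e.\ only the physical admissibility part) yields $\widehat\BU\in\mathcal{G}$ together with the identity $\BF(\BU)\Bxi=\dBQ^{-1}\BF_1(\widehat\BU)$.

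Using the orthogonality $\dBQ^{-1}=\dBQ^\top$, I would then rewrite the target quantity as
\[
\bigl(\beta\BU\pm\BF(\BU)\Bxi\bigr)\cdot\tilde\Bn_*
=\bigl(\beta\widehat\BU\pm\BF_1(\widehat\BU)\bigr)\cdot\hat{\tilde\Bn}_*,
\]
where the right-hand side is exactly the quantity controlled by the 1D statement in Lemma~\ref{lem:GQLnstar}. Since $\widehat\BU\in\mathcal{G}$ and $\beta\geq 1$, that lemma guarantees strict positivity, which completes the argument.

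The main (minor) obstacle is bookkeeping of the rotation on $\tilde\Bn_*$: one must verify that the GQL test vector is itself rotation-covariant in the sense $\dBQ^\top\hat{\tilde\Bn}_*=\tilde\Bn_*$, and that the parameter $\hat\Bv_*$ still lies in $\BALL$ so that $\hat{\tilde\Bn}_*$ is a legitimate GQL test vector for $\widehat\BU$. Both follow immediately from $\BQ\in O(d)$. Everything else is a repeat of the reduction already carried out in the proof of Lemma~\ref{thm:685}, so no new inequalities beyond Lemmas~\ref{lem:685} and \ref{lem:GQLnstar} are needed.
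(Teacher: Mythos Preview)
Your proposal is correct and follows exactly the route the paper indicates: reduce to the 1D estimate (Lemma~\ref{lem:GQLnstar}) via the rotational invariance of Lemma~\ref{lem:685}, mirroring the proof of Lemma~\ref{thm:685}. The paper's own proof is omitted with precisely this outline, so you have simply filled in the bookkeeping (the covariance $\dBQ^\top\hat{\tilde\Bn}_*=\tilde\Bn_*$ and the observation that only the $\mathcal{G}$-part of Lemma~\ref{lem:685} is needed) that the authors left implicit.
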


\begin{proof}
	Utilizing Lemma \ref{lem:GQLnstar} and the rotational invariance property (Lemma \ref{lem:685}), the proof follows similarly to that of Theorem \ref{thm:685} and is therefore omitted.
\end{proof}


\begin{theorem} \label{thm:2170}
	For the multidimensional relativistic Euler system, the first-order scheme \eqref{eq:1091} is (locally) entropy-preserving under the CFL condition
	\begin{equation}\label{eq:1134}
		\frac{\alpha \dt} { 2 |K|} 
		\sum_{j=1}^{N_K} \left|\CE_{K}^{j}\right|  \le 1.
	\end{equation}
\end{theorem}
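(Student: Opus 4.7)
The plan is to mimic the 1D argument in \Cref{lem:1Dlocal}, replacing the two-neighbor stencil with the polygonal stencil and the $\Bn_*$/$\tilde\Bn_*$ inequalities of \Cref{thm:475} with their rotationally covariant analogues \Cref{thm:685,lem:2DGQLnstar}. Assume $\oBU_K^n,\oBU_{K_j}^n\in\Omega_\sigma$ for all $j=1,\dots,N_K$. By the GQL representation in \Cref{thm:1216}, it suffices to verify the three linear constraints $D(\oBU_K^{n+1})>0$, $\oBU_K^{n+1}\cdot\tilde\Bn_*>0$, and $\oBU_K^{n+1}\cdot\Bn_*+p_*\ge 0$ for arbitrary $\Bv_*\in\BALL$ and $\theta_*>0$.

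The key algebraic observation is the geometric identity
$$\sum_{j=1}^{N_K}\CEj\,\Bxij=\mathbf{0},$$
which allows me to eliminate the self-flux $\BF(\oBUk)\Bxij$. Substituting \eqref{eq:1102} into \eqref{eq:1091} and setting $\beta_K:=\dfrac{\alpha\Delta t}{2|K|}\sum_{j}\CEj\le 1$ (by the CFL condition \eqref{eq:1134}), I would rewrite
$$\oBU_K^{n+1}=(1-\beta_K)\oBUk+\frac{\Delta t}{2|K|}\sum_{j=1}^{N_K}\CEj\Big[\alpha\oBUj-\BF(\oBUj)\Bxij\Big].$$
Using the same identity, I can split $p_*=(1-\beta_K)p_*+\dfrac{\alpha\Delta t}{2|K|}\sum_j\CEj p_*$, and since $\sum_j\CEj(\Bxij\cdot\Bv_*)p_*=0$, I can also absorb a $(\Bxij\cdot\Bv_*)p_*$ term free of charge into each summand.

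Taking the inner product with $\Bn_*$ and adding $p_*$ then yields
$$\oBU_K^{n+1}\cdot\Bn_*+p_*=(1-\beta_K)(\oBUk\cdot\Bn_*+p_*)+\frac{\Delta t}{2|K|}\sum_{j=1}^{N_K}\CEj\Big[\alpha(\oBUj\cdot\Bn_*+p_*)-(\BF(\oBUj)\Bxij)\cdot\Bn_*+(\Bxij\cdot\Bv_*)p_*\Big].$$
Applying \Cref{thm:685} with $\Bxi=-\Bxij$ to the term $-(\BF(\oBUj)\Bxij)\cdot\Bn_*$ gives
$$-(\BF(\oBUj)\Bxij)\cdot\Bn_*\ge -\alpha(\oBUj\cdot\Bn_*+p_*)-(\Bxij\cdot\Bv_*)p_*,$$
so each bracketed summand is nonnegative, and the whole expression is bounded below by $(1-\beta_K)(\oBUk\cdot\Bn_*+p_*)\ge 0$. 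A parallel calculation, using \Cref{lem:2DGQLnstar} in place of \Cref{thm:685}, shows $\oBU_K^{n+1}\cdot\tilde\Bn_*>0$, since no compensating $p_*$ term is needed there and the $\tilde\Bn_*$ bound has a strict inequality. The positivity of $D(\oBU_K^{n+1})$ is the scalar instance of this computation, projected onto the first coordinate; equivalently, it follows from $\tilde\Bn_*\to(1,\mathbf{0}^\top,\mathbf{0})^\top$ in the limiting geometry, or directly from the fact that $\alpha\oBUj\mp\BF(\oBUj)\Bxij$ has positive first component by \Cref{lem:2DGQLnstar}.

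The main obstacle here is nothing computational but rather organizational: one must choose the decomposition of $p_*$ and the sign convention in the application of \Cref{thm:685} so that the surplus terms $(\Bxij\cdot\Bv_*)p_*$ telescope to zero via the geometric identity $\sum_j\CEj\Bxij=\mathbf{0}$. Once that bookkeeping is correct, the three constraints collapse to convex combinations of quantities known to be nonnegative from the GQL representation of $\Omega_\sigma$, and \Cref{thm:1216} immediately yields $\oBU_K^{n+1}\in\Omega_\sigma$.
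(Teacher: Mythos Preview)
Your proposal is correct and follows essentially the same approach as the paper: use the geometric identity $\sum_j\CEj\Bxij=\mathbf{0}$ to cancel the self-flux, then verify the three GQL constraints via \Cref{thm:685} and \Cref{lem:2DGQLnstar}. One small sign slip: applying \Cref{thm:685} with $\Bxi=-\Bxij$ actually gives $-(\BF(\oBUj)\Bxij)\cdot\Bn_*\ge -\alpha(\oBUj\cdot\Bn_*+p_*)+(\Bxij\cdot\Bv_*)p_*$, so the free term you insert into each summand should be $-(\Bxij\cdot\Bv_*)p_*$ rather than $+(\Bxij\cdot\Bv_*)p_*$; since this term telescopes to zero either way, the argument goes through unchanged once the sign is fixed.
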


\begin{proof}
	Assume $\oBU^n_K \in \Omega_\sigma$ and $\oBU^n_{K_j} \in \Omega_\sigma$ for all $j = 1,\dots,N_K$. We will show that $\oBU^{n+1}_K \in \Omega_\sigma$.
	
	For convenience, denote the mass density and velocity of $\oBU^{n}_{K}$ by $\oDk$ and $\oBv^n_{K}$. 
	Note that $ \sum_{j=1}^{N_K} \CEj \Bxij = \mathbf{0} $ and
	$ \oBvj \cdot \Bxij < \alpha =c $ for any $j$.
	Then, we have
	\begin{align*}
		\oD_{K}^{n+1}
		& = 
		\oD_{K}^{n} 
		- 
		\frac{\Delta t}{2|K|} \sum_{j=1}^{N_K} \CEj
		\left(
		\oBvk \cdot \Bxij \, \oDk + 
		\oBvj \cdot \Bxij \, \oDj
		+\alpha \oDk
		-\alpha \oDj
		\right) \\
		& =
		\Bigg(
		1
		-
		\frac{\alpha \Delta t}{2|K|} \sum_{j=1}^{N_K} \CEj
		\Bigg)
		\oDk
		+ 
		\frac{\Delta t}{2|K|} \sum_{j=1}^{N_K} \CEj
		\Big(\alpha - \oBvj \cdot \Bxij \Big) \oDj 
		> 0.
	\end{align*}
	Additionally,
	\begin{align*}
		\oBU^{n+1}_K\cdot \Bn_* + p_* 
		&	= 
		\oBU^{n}_K \cdot \Bn_* + p_*
		-\frac{\Delta t}{2|K|} \sum_{j=1}^{N_K} \CEj 
		\bigg(
		\Bn_*^\top {\bf F} (\oBUk) \Bxij   
		+\Bn_*^\top {\bf F} (\oBUj) \Bxij 
		\\
		& \qquad 
		+\alpha \big(\oBUk\cn+p_*\big)
		-\alpha \big(\oBUj\cn+p_*\big)
		\bigg)
		\\
		&= 
		\left( 
		1- \frac{\alpha\Delta t}{2|K|} \sum_{j=1}^{N_K} \CEj 
		\right)
		\big( \oBU^{n}_K \cdot \Bn_* + p_* \big)
		-
		\frac{\Delta t}{2|K|} \Bn_*^\top {\bf F} (\oBUk)
		\left(
		\sum_{j=1}^{N_K} \CEj \Bxij 
		\right) \\
		& \qquad +
		\frac{\Delta t}{2|K|} \sum_{j=1}^{N_K} \CEj
		\left[
		\Bn_*^\top {\bf F} (\oBUj) (-\Bxij) 
		+
		\alpha \big(\oBUj\cn+p_*\big)
		\right] 
		\\
		&
		\stackrel{\eqref{eq:1845}}{\ge} 
		\frac{p_* \Delta t}{2|K|} \Bv_* \cdot
		\left(
		\sum_{j=1}^{N_K} \CEj \Bxij
		\right) 
		\stackrel{\eqref{eq:1134}}{\ge} 0.
	\end{align*}
	Similarly, using \Cref{lem:2DGQLnstar} yields $\oBU^{n+1}_K \cdot \tilde\Bn_* > 0$. 
	Therefore, we obtain that $\oBU^{n+1}_{K} \in \Omega_\sigma$ by \Cref{thm:1216}. The proof is completed.
\end{proof}

\subsubsection{High-Order Schemes}

We are now in a position to discuss high-order entropy-preserving schemes for the multidimensional relativistic Euler equations. 
At time $t = t^n$, assume that a polynomial $\BU^n_K(\Bx)$ of degree at most $k$ is available in each cell $K$, either reconstructed in a finite volume method or evolved in a DG method.
The cell average of $\BU^n_K(\Bx)$ over $K$ is exactly $\oBU^n_{K}$. 
If the forward Euler method is used for time discretization, the cell averages are updated using the following scheme:
\begin{equation}\label{eq:1487}
	\oBU_{K}^{n+1}  
	= 
	\oBU_{K}^{n} 
	- 
	\frac{\Delta t}{|K|} 
	\sum_{j=1}^{N_K}  |\CE_K^{(j)}| \widehat \BF_K^{(j)},
\end{equation}
where
\[
\widehat \BF_K^{(j)} 
= 
\sum_{q=1}^Q
\omega^{\tt G}_q \,
\widehat\BF 
\left( 
\BU_K^n ( {\bm x}_K^{(j,q)} ), 
\BU_{K_j}^n ( {\bm x}_K^{(j,q)} ); 
{\bm \xi}^{(j)}_{K} 
\right).
\] 
Here, $\widehat{\BF}$ denotes the LF numerical flux \eqref{eq:1102}, and $\{ \Bx_K^{(j,q)}, \omega^{\tt G}_q \}_{q = 1}^{Q}$ are the nodes and weights of a $Q$-point Gauss-type quadrature on the edge ${\CE}_K^{(j)}$, exact for polynomials of degree up to $k$. 

\begin{definition}[Feasible CAD]
	A multidimensional CAD    
	\begin{equation}\label{eq:1514}
		\oBU^n_K
		=
		\frac{1}{|K|} \int_K \BU^n_K(\Bx) ~ \mathrm{d} \Bx
		=
		\sum_{j=1}^{N_K}
		\omega^{(j)}_K
		\sum_{q=1}^{Q}
		\omega^{\tt G}_{q}
		\BU^n_K(\Bx^{(j,q)}_K)
		+
		\sum_{s=1}^{S}
		\hat\omega^{(s)}_K
		\BU^n_K(\hat\Bx^{(s)}_K),
	\end{equation}
	is said to be {\em feasible} for polynomial space $\mathbb{P}^k$, if it simultaneously satisfies 
	the following three conditions:
	\begin{enumerate}[label=(\roman*)]
		\item  The identity \eqref{eq:1514} holds exactly for all polynomials in $\mathbb{P}^k$.
		\item  The {\em boundary weights} ${\omega}^{(j)}_{K}$ and the {\em internal weights} ${\hat\omega}_K^{(s)}$ are all nonnegative, and their summation equals one. 
		\item  The {\em internal node} set $\mathbb I_K := \{ \hat\Bx^{(s)}_K \}_{s=1}^{S} \subset K$. 
	\end{enumerate}
\end{definition}

For convenience, define the set of boundary nodes $\mathbb B_K := \{ \Bx^{(j,q)}_K \}_{1\le j \le N_K, 1 \le q \le Q}$ and the set of CAD nodes $\mathbb X_K := \mathbb I_K \cup \mathbb B_K$.

\begin{theorem}\label{thm:high2D} 
	If $\BU^n_K(\Bx) \in \Omega_\sigma$ for all $\Bx \in \mathbb{X}_K$ and $K \in \mathcal{T}$, 
	then the solution $\oBU^{n+1}_K$ computed by the high-order scheme \eqref{eq:1487} belongs to $\Omega_\sigma$ for all $K \in \mathcal{T}$ under the CFL condition
	\begin{equation}\label{eq:2095}
		\frac{\alpha \dt}{|K|}
		\le
		\min_{1\le j\le N_K}\frac{\omega_K^{(j)}}{|\CE^{(j)}_K|}
		\quad
		\forall K \in \mathcal{T}.
	\end{equation}
\end{theorem}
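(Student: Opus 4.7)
The plan is to reduce the high-order update \eqref{eq:1487} to a convex combination of values at the internal CAD nodes and of flux-adjusted values at the boundary CAD nodes, and then to verify each of the three linear constraints defining $\Omega_\sigma^{(2)}$ in \Cref{thm:1216}; membership in $\Omega_\sigma^{(2)} = \Omega_\sigma$ will then follow. First I would substitute the feasible CAD \eqref{eq:1514} into \eqref{eq:1487} to obtain
\begin{equation*}
\oBU^{n+1}_K = \sum_{s=1}^{S}\hat\omega^{(s)}_K\,\BU^n_K(\hat\Bx^{(s)}_K) + \sum_{j=1}^{N_K}\omega^{(j)}_K\sum_{q=1}^Q \omega^{\tt G}_q\,\Xi^{(j,q)}_K,
\end{equation*}
where $\Xi^{(j,q)}_K := \BU^n_K(\Bx^{(j,q)}_K) - \frac{\Delta t\,|\CE^{(j)}_K|}{\omega^{(j)}_K\,|K|}\,\widehat\BF\big(\BU^n_K(\Bx^{(j,q)}_K),\BU^n_{K_j}(\Bx^{(j,q)}_K);\Bxi^{(j)}_K\big)$, and with $\beta_j := \Delta t\,|\CE^{(j)}_K|/(2\omega^{(j)}_K|K|)$ the CFL condition \eqref{eq:2095} is exactly $2\alpha\beta_j\le 1$.

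For arbitrary $\Bv_*\in\BALL$ and $\theta_*>0$, I would then combine the CAD with $\sum_q\omega^{\tt G}_q = 1$ to write
\begin{equation*}
\oBU^{n+1}_K\cdot\Bn_* + p_* = \sum_{s}\hat\omega^{(s)}_K\big(\BU^n_K(\hat\Bx^{(s)}_K)\cdot\Bn_*+p_*\big) + \sum_{j}\omega^{(j)}_K\sum_q\omega^{\tt G}_q\big(\Xi^{(j,q)}_K\cdot\Bn_*+p_*\big).
\end{equation*}
The internal-node terms are nonnegative by hypothesis. For the boundary terms, I would expand the LF flux \eqref{eq:1102} and apply \Cref{thm:685} to both $\BU^n_K(\Bx^{(j,q)}_K)$ and $\BU^n_{K_j}(\Bx^{(j,q)}_K)$ along the reversed normal $-\Bxi^{(j)}_K$ to derive
\begin{equation*}
\Xi^{(j,q)}_K\cdot\Bn_*+p_* \ge (1-2\alpha\beta_j)\big(\BU^n_K(\Bx^{(j,q)}_K)\cdot\Bn_*+p_*\big) + 2\beta_j\,(\Bxi^{(j)}_K\cdot\Bv_*)\,p_*.
\end{equation*}
The first bracket is nonnegative by the CFL hypothesis and the admissibility of $\BU^n_K(\Bx^{(j,q)}_K)$. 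Summing the signed residual over $q$ and $j$, the factor $\omega^{(j)}_K\cdot 2\beta_j = \Delta t\,|\CE^{(j)}_K|/|K|$ collapses it to $(p_*\Delta t/|K|)\,\Bv_*\cdot\sum_j |\CE^{(j)}_K|\Bxi^{(j)}_K = 0$, via the closed-polygon identity $\sum_j|\CE^{(j)}_K|\Bxi^{(j)}_K=\mathbf{0}$.

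The constraints $\oBU^{n+1}_K\cdot\tilde\Bn_* > 0$ and $D(\oBU^{n+1}_K) > 0$ will follow by strictly parallel arguments, using \Cref{lem:2DGQLnstar} and a direct componentwise density calculation in place of \Cref{thm:685}; in each case the $\Bv_*$- or $\Bxi^{(j)}_K$-dependent residual again vanishes after summation via the same closed-polygon identity. Combining the three verified constraints with \Cref{thm:1216} will then yield $\oBU^{n+1}_K\in\Omega_\sigma^{(2)} = \Omega_\sigma$. The main obstacle is that the individual local updates $\Xi^{(j,q)}_K$ need not themselves lie in $\Omega_\sigma$: the per-edge bound carries an indefinite-signed term $2\beta_j(\Bxi^{(j)}_K\cdot\Bv_*)p_*$. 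The crucial structural insight, already at the heart of the first-order proof of \Cref{thm:2170}, is that invoking \Cref{thm:685} in the $-\Bxi$ direction (rather than the $+\Bxi$ direction) produces exactly the sign needed for this residual to aggregate into a quantity proportional to $\sum_j|\CE^{(j)}_K|\Bxi^{(j)}_K$, which vanishes identically for any closed polygon.
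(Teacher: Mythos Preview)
Your proposal is correct and follows essentially the same approach as the paper: both arguments substitute the feasible CAD \eqref{eq:1514} into the high-order update, apply \Cref{thm:685} along the reversed normal $-\Bxi^{(j)}_K$ to generate the residual term proportional to $(\Bxi^{(j)}_K\cdot\Bv_*)p_*$, and then eliminate that residual via the closed-polygon identity $\sum_j|\CE^{(j)}_K|\Bxi^{(j)}_K=\mathbf{0}$; the remaining two constraints are handled identically via \Cref{lem:2DGQLnstar} and a direct density computation. The only cosmetic difference is that you package the boundary contributions into the auxiliary quantities $\Xi^{(j,q)}_K$ and bound each one, whereas the paper instead peels off $\frac{\alpha\Delta t}{|K|}|\CE^{(j)}_K|$ from the boundary weight $\omega^{(j)}_K$ and absorbs it into the flux brackets before applying \Cref{thm:685}; this is the same algebra grouped slightly differently.
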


\begin{proof}
	Employing the CAD \eqref{eq:1514} and \Cref{thm:685} gives
	\begin{align*}
		\oBU^{n+1}_K \cn+p_* 
		= &
		\sum_{j = 1}^{N_K} \sum_{q = 1}^{Q}
		\omega^{\tt G}_q
		\left(
		\omega_K^{(j)} - \frac{\alpha\dt}{|K|}\CEj
		\right)
		(\BU^{n}_K(\Bx^{(j,q)}_K)\cn+p_*)
		+
		\sum_{s=1}^{S}
		\hat\omega^{(s)}_K
		(\BU^n_K(\hat\Bx^{(s)}_K)\cn+p_*) \\
		& +
		\frac{\dt}{2|K|}
		\sum_{j = 1}^{N_K} \CEj
		\sum_{q = 1}^{Q} \omega_q^{\tt G}
		\left[
		\alpha \BU^n_{K}(\Bx^{(j,q)}_K)\cn
		+\alpha p_*
		-\big(\BF(\BU^n_K(\Bx^{(j,q)}_K)) \Bxij \big)\cn
		\right] \\
		& +
		\frac{\dt}{2|K|}
		\sum_{j = 1}^{N_K} \CEj
		\sum_{q = 1}^{Q} \omega_q^{\tt G}
		\left[
		\alpha \BU^n_{K_j}(\Bx^{(j,q)}_K) \cn
		+\alpha p_*
		-\big( \BF(\BU^n_{K_j}(\Bx^{(j,q)}_K)) \Bxij \big) \cn 
		\right] \\
		\ge & 
		\frac{\dt}{|K|}
		\sum_{j = 1}^{N_K} \CEj
		\sum_{q = 1}^{Q} \omega_q^{\tt G}
		\left(
		p_* \Bv_* \cdot \Bxij 
		\right)
		=
		\frac{\dt}{|K|}
		\sum_{q = 1}^{Q} \omega_q^{\tt G}
		\, p_* \Bv_* \cdot
		\left(
		\sum_{j = 1}^{N_K} \CEj
		\Bxij 
		\right) = 0.
	\end{align*}
	Similarly, using \Cref{lem:2DGQLnstar}, we obtain $\oBU^{n+1}_K \cdot \tilde\Bn_* > 0$. 
	Denote the mass density of $\BU^n_{K}(\Bx^{(j,q)}_K)$ and $\BU^n_K(\hat\Bx^{(s)}_K)$ by $D^{(j,q)}_K$ and $\widehat{D}^{(s)}_K$, and their velocities by $\Bv^{(j,q)}_K$ and $\widehat{\Bv}^{(s)}_K$, respectively. Then, we rewrite the scheme for $\oD^{n+1}_K$ as
	\begin{align*}
	\oD^{n+1}_K
	= & 
	\sum_{j = 1}^{N_K} \sum_{q = 1}^{Q}
	\omega^{\tt G}_q
	\left(
	\omega_K^{(j)} - \frac{\alpha\dt}{|K|}\CEj
	\right)
	D^{(j,q)}_K
	+
	\sum_{s=1}^{S}
	\widehat\omega^{(s)}_K
	\widehat D^{(s)}_K \\
	& +
	\frac{\dt}{2|K|}
	\sum_{j = 1}^{N_K} \CEj
	\sum_{q = 1}^{Q} \omega_q^{\tt G}
	\left[
	\left(
	\alpha - \Bv_K^{(j,q)} \cdot \Bxij
	\right)
	D^{(j,q)}_K
	+
	\left(
	\alpha - \Bv_{K_j}^{(j,q)} \cdot \Bxij
	\right)
	D^{(j,q)}_{K_j}
	\right] >0.
\end{align*}
	Thus, $\oBU^{n+1}_{K} \in \Omega_\sigma$ by \Cref{thm:1216}, completing the proof.
\end{proof}

\begin{remark}
	The theoretical CFL condition \eqref{eq:2095} depends on the boundary weights $\omega^{(j)}_K$ of the adopted CAD. 
	A feasible CAD is not unique, and an {\em optimal} CAD (OCAD) should maximize $\bar \omega := \min_{j} \omega_K^{(j)}/|\CE^{(j)}_K|$, leading to a less restrictive CFL condition.
	Recent studies \cite{cui2024optimal,cui20203,ding2025} have established OCAD frameworks for Cartesian and unstructured triangular meshes.
\end{remark}

To ensure $\BU^n_K(\Bx) \in \Omega_\sigma$, a simple limiter, similar to the 1D counterpart in \Cref{sec:limiter}, can be applied with $\mathbb{X}_j$ replaced by $\mathbb{X}_K$.


\section{Priori Estimations of Local Entropy Bounds}	\label{sec:6}
In the previous Section \ref{sec:scheme}, we have established a rigorous theoretical framework for designing provably entropy-preserving high-order schemes that enforce the constraint $S(\BU) \geq \sigma$. The lower bound of the specific entropy, $\sigma$, can be determined either globally ($\sigma = S^{\rm G}_{\min}$) or locally ($\sigma = S^{\rm L}_{\min}$), leading to globally or locally entropy-preserving schemes, respectively. 
Estimating local lower bounds for entropy is a challenging task. 
In this section, we discuss a priori estimations for the specific entropy lower bounds and propose two new approaches tailored for 1D and 2D Cartesian meshes, respectively. 
The design of such an estimation is crucial for both the robustness and accuracy of entropy-preserving high-order schemes. 
On one hand, if the estimated lower bound is too small, spurious oscillations in the numerical results would not be effectively controlled by the entropy-preserving limiter, leading to nonphysical solutions. 
On the other hand, if the estimated lower bound is too large, the entropy-preserving limiter may become overly restrictive, thereby compromising the accuracy of the high-order schemes. 
For clarity, we first discuss the estimation in the case of forward Euler time discretization, while the treatment for high-order time discretization methods is deferred to Remark \ref{rmk:3270}.

\subsection{Estimation on 1D Meshes}
Entropy bound estimation has been previously studied in the context of non-relativistic Euler equations. Lv and Ihme \cite{LV2015715} introduced the following formula for estimating the local minimum entropy in cell $I_j$ at time level $t^{n+1}$:
\begin{equation}\label{eq:3133}
	(\hat{S}^{\rm L}_{\min})^{n+1}_{j,\texttt{LI1}} = \min \left\{
	S(\BU_{j-1}^n(x_{j-\frac12})),
	S(\BU_{j+1}^n(x_{j+\frac12})),
	\min_{q=1,\dots,Q} S(\BU_{j}^n(x_{q,j}^{\tt G}))
	\right\},
\end{equation}
where $\{ x_{q,j}^{\tt G} \}_{q = 1}^{Q}$ denotes the nodes of a $Q$-point Gauss-type quadrature in $I_j$ that is exact for all polynomials in $\mathbb{P}^k$. 
Lv and Ihme found that the above formula \eqref{eq:3133} tends to overestimate the entropy lower bound, which may degrade the high-order accuracy. To mitigate this issue, they further relaxed \eqref{eq:3133} by introducing the following correction:
\begin{equation}\label{eq:3141}
	(\hat{S}^{\rm L}_{\min})^{n+1}_{j,\texttt{LI2}} = 
	\min
	\left\{
	(\hat{S}^{\rm L}_{\min})^{n+1}_{j,\texttt{LI1}},
	(S_{\triangledown}^{\tt G})_j^n
	-
	\theta_{j}^{\tt LI}
	\Big(
	(S_{\vartriangle}^{\tt G})_j^n
	-
	(S_{\triangledown}^{\tt G})_j^n
	\Big)
	\right\},
	\quad
	\theta_{j}^{\tt LI} =
	\frac
	{
		\min\limits_{q = 1,\dots,Q, q \neq q_\triangledown} 
		|x^{\tt G}_{q,j}-x^{\tt G}_{q_\triangledown,j}|
	}
	{
		|x^{\tt G}_{q_\vartriangle,j}-x^{\tt G}_{q_\triangledown,j}|
	}.
\end{equation}
Here, $(S_{\vartriangle}^{\tt G})_j^n$ (resp. $(S_{\triangledown}^{\tt G})_j^n$) denotes the maximum (resp. minimum) among $\{S(\BU(x^{\tt G}_{q,j}))\}_{q=1}^Q$, with corresponding indices $q_\vartriangle$ and $q_\triangledown$. 
However, Lv and Ihme observed that the formula \eqref{eq:3141} may result in an underestimation, especially in the vicinity of discontinuities. To address this issue, they modified \eqref{eq:3141} by incorporating information from neighboring cells:
\begin{equation} \label{LV Final star}
	(\hat{S}^{\rm L}_{\min})^{n+1}_{j,\texttt{LI3}} = 
	\max
	\left\{
	(\hat{S}^{\rm L}_{\min})^{n+1}_{j,\texttt{LI2}},
	(\hat{S}^{\rm L}_{\min})^{n}_{j-1,\texttt{LI1}},
	(\hat{S}^{\rm L}_{\min})^{n}_{j,\texttt{LI1}},
	(\hat{S}^{\rm L}_{\min})^{n}_{j+1,\texttt{LI1}}
	\right\}.
\end{equation}
Recently, Ching, Johnson, and Kercher \cite{ching2024positivity, ching2024positivity2, ching2025positivity} proposed an alternative relaxation of \eqref{eq:3133} by including Gauss-type quadrature nodes from neighboring cells:
\begin{equation} \label{CJK}
	(\hat{S}^{\rm L}_{\min})^{n+1}_{j,\texttt{CJK}}
	=
	\min\left\{
	(S_{\triangledown}^{\tt G})_{j-1}^n,
	(S_{\triangledown}^{\tt G})_j^n,
	(S_{\triangledown}^{\tt G})_{j+1}^n
	\right\}.
\end{equation}

{\bf New Estimation Approach I}: In this paper, we propose the following new approach for estimating the local entropy lower bound in cell $I_j$ at time level $t^{n+1}$: 
\begin{equation}\label{eq:3089}
	(\hat{S}^{\rm L}_{\min})^{n+1}_{j} = 
	\hat{S}^{\rm G}_{\min} \vee \min \left\{
	\min_{I_{j-1}} \mathcal{P}^{\tt 1D}_{I_{j-1}} S(\BU_{j-1}^n(x)),
	\min_{I_j} \mathcal{P}^{\tt 1D}_{I_j} S(\BU_j^n(x)),
	\min_{I_{j+1}} \mathcal{P}^{\tt 1D}_{I_{j+1}} S(\BU_{j+1}^n(x))
	\right\},
\end{equation}
where $\vee$ denotes taking the maximum of two numbers, $\mathcal{P}^{\tt 1D}_{I_j}$ denotes a projection from any continuous function $f$ defined on $I_j$ to a quadratic polynomial $p \in \mathbb{P}^2$, uniquely determined by the following conditions
\[
p(x_{j-\frac12}) = f(x_{j-\frac12}), \quad
p(x_{j}) = f(x_{j}), \quad
p(x_{j+\frac12}) = f(x_{j+\frac12}),
\]
and $\hat{S}^{\rm G}_{\min}$ denotes an estimate of the global entropy lower bound $S^{\rm G}_{\min}$. In this paper, we choose 
\begin{equation*}
	\hat{S}^{\rm G}_{\min} = \min_j \min_{I_j} \mathcal{P}^{\tt 1D}_{I_j} \, S(\BU_j^0(x)).
\end{equation*}
As demonstrated in \Cref{section:5}, our new formula \eqref{eq:3089} is more suitable for estimating the local entropy bound in the context of the relativistic Euler system, compared with the existing formulas \eqref{eq:3141} and \eqref{CJK} in the literature.

\begin{remark} \label{rmk:3097}
	To illustrate the projection $\mathcal{P}^{\tt 1D}_{I_j}$, consider a continuous function $f(x)$ defined on $I = [-1,1]$. The resulting polynomial after the projection, $p(x) = \mathcal{P}^{\tt 1D}_I f(x)$, can be expressed as
	\[
	p(x) 
	=
	c_2 \, x^2 + c_1 \, x + c_0 
	=
	\left[
	\frac{1}{2}f(-1)-f(0)+\frac{1}{2}f(1)
	\right]x^2
	+
	\left[
	\frac{1}{2}f(1)-\frac{1}{2}f(-1)
	\right]
	x+f(0).
	\]
	The minimum of $p(x)$ over $I$ is explicitly given by 
	\[
	\min_{I} p(x) =
	\begin{dcases}
		c_0-\frac{c_1^2}{4 c_2}, & \textrm{if }~ c_2 > 0, \\
		\min\{f(-1),f(1)\}, & \textrm{if }~ c_2 \le 0.
	\end{dcases}
	\]
\end{remark}

{\bf New Estimation Approach II}: We also propose another new estimation approach based on the monotonicity-preserving technique \cite{suresh1997accurate}. In this approach, 
the interface point values contributing to numerical fluxes are limited to satisfy the local MEP, while the internal point values are only required to meet the global MEP. The global entropy bound can be directly estimated as  
$
S_{\min}^{\rm G,MP}:=\min\limits_{x}S(\BU_{0}(x)),
$ 
while the local entropy lower bounds are given by 
\begin{align*}
	(S_{\min}^{\rm L,MP})_j^{n+1} &:= S_{\min}^{\rm G,MP} \vee \min\{S_{j+1/2}^{\min,\pm},\ S_{j-1/2}^{\min,\pm}, S_{\rm avg}^{\min}\},\\
	S_{\rm avg}^{\min} &:= \min\{S(\overline{\BU}_{j-1}^n),\ S(\overline{\BU}_j^n),\ S(\overline{\BU}_{j+1}^n)\},
\end{align*}
with the quantities $S_{j+1/2}^{\min,\pm}$ computed through the monotonicity-preserving technique \cite{suresh1997accurate}:
\begin{align*}
	S_{j+1/2}^{\min,-} &:= \max\Big[\min\Big(S(\overline{\bf U}_{j}^n),\ S(\overline{\bf U}_{j+1}^n),\ S_{j+1/2}^{\rm MD}\Big),\ 
	\min\Big(S(\overline{\bf U}_{j}^n),\ S_{j+1/2}^{\rm UL},\ S_{j+1/2}^{\rm LC}\Big)\Big],\\
	d_{j} &:= S(\overline{\bf U}_{j+1}^n)-2S(\overline{\bf U}_{j}^n)+S(\overline{\bf U}_{j-1}^n),\\
	S_{j+1/2}^{\rm UL} &:= S(\overline{\bf U}_{j}^n)+\alpha\Big(S(\overline{\bf U}_{j}^n)-S(\overline{\bf U}_{j-1}^n)\Big),\\
	d_{j+1/2}^{\rm M4X} &:= {\rm minmod}\ \left(4d_{j}-d_{j+1},\ 4d_{j+1}-d_{j},\ d_{j},\ d_{j+1}\right),\\
	S_{j+1/2}^{\rm MD} &:= \frac{1}{2}\left(S(\overline{\bf U}_{j}^n)+ S(\overline{\bf U}_{j+1}^n)-d_{j+1/2}^{\rm M4X}\right),\\
	S_{j+1/2}^{\rm LC} &:= S(\overline{\bf U}_{j}^n)+\frac{1}{2}\left(S(\overline{\bf U}_{j}^n)-S(\overline{\bf U}_{j-1}^n)\right)+\frac{\beta}{3}d_{j-1/2}^{\rm M4X},
\end{align*}
where $\alpha=2$ and $\beta=4$. The estimation of $S_{j-1/2}^{\min,+}$ follows a symmetric transformation. 
We observe that this monotonicity-preserving estimation is usually too strict to maintain the high-order accuracy of the original scheme in smooth regions. To mitigate this, a shock-aware parameter $\nu_j$ is introduced to relax the overestimated local entropy bound by a convex combination with the global one:
\begin{equation} \label{1DMPrelax}
	(\widehat{S}_{\min}^{\rm L,MP})_j^{n+1}:=(1-\nu_j)S_{\min}^{\rm G,MP}+\nu_j (S_{\min}^{\rm L,MP})_j^{n+1}
\end{equation}
with
\begin{equation*}
	\nu_j := \min\left\{\frac{\max\{\Delta p_j, \Delta p_{j-1}\}}{\min\Big\{p(\overline{\BU}_{j-1}^n), p(\overline{\BU}_j^n), p(\overline{\BU}_{j+1}^n)\Big\}}, 1\right\},\quad \Delta p_j=\Big|p(\overline{\BU}_{j+1}^n)-p(\overline{\BU}_j^n)\Big|.
\end{equation*}

\subsection{Estimation on 2D Cartesian Meshes}
We consider a rectangular mesh that consists of cells $I_{j,k} = [x_{j-\frac12},x_{j+\frac12}] \times [y_{k-\frac12},y_{k+\frac12}]$. The solution polynomial in $I_{j,k}$ at time level $t^n$ is denoted by $\BU^n_{j,k}(x,y)$.

{\bf New Estimation Approach I}: Given the solution polynomial at initial time $\{\BU^0_{j,k}(x,y)\}_{j,k}$, the global entropy lower bound $S^{\rm G}_{\min}$ is estimated as 
\[
\hat{S}^{\rm G}_{\min} = \min_{j,k} \min_{I_{j,k}} \mathcal{P}^{\tt 2D}_{I_{j,k}} \, S(\BU_{j,k}^0(x,y)),
\]
where $\mathcal{P}^{\tt 2D}_{I_{j,k}}$ denotes a projection from any continuous function $f$ on $I_{j,k}$ to the quadratic polynomial $p \in \mathbb{P}^2$ that solves the following least-square problem:
\[
\underset{p \in \mathbb{P}^2}{{\min}} 
\sum_{(\xi,\eta) \in \mathcal{L}_{j,k}} 
\Big(p(\xi,\eta)-f(\xi,\eta)
\Big)^2
\quad
\textrm{with}
\quad
\mathcal{L}_{j,k} = \left\{x_{j-\frac12},x_j,x_{j+\frac12}\right\} \times \left\{y_{k-\frac12},y_k,y_{k+\frac12}\right\}.
\]
Similarly, given the solution polynomials at time level $t^n$, $\{\BU_j^{n}(x,y)\}_{j,k}$, the local entropy lower bound in cell $I_{j,k}$ at time level $t^{n+1}$ can be estimated by
\begin{equation} \label{2D_CUI}
	(\hat{S}^{\rm L}_{\min})^{n+1}_{j,k} 
	= 
	\hat{S}^{\rm G}_{\min} \vee
	\min \left\{
	\begin{aligned}
		&\min_{I_{j,k}} \mathcal{P}^{\tt 2D}_{I_{j,k}} S(\BU_{j,k}^n(x,y)),
		\\
		&\min_{I_{j\pm 1,k}} \mathcal{P}^{\tt 2D}_{I_{j\pm 1,k}} S(\BU_{j\pm 1,k}^n(x,y)),
		\\
		&\min_{I_{j,k\pm 1}} \mathcal{P}^{\tt 2D}_{I_{j,k\pm 1}} S(\BU_{j,k\pm 1}^n(x,y))
	\end{aligned}
	\right\}.
\end{equation}

\begin{remark}
	To illustrate the projection $\mathcal{P}^{\tt 2D}_I$, let us consider a continuous function $f(x,y)$ defined on $I = [-1,1] \times [-1,1]$. The resulting polynomial, $p(x,y) = \mathcal{P}^{\tt 2D}_I f(x,y)$, can be expressed as
	\begin{equation*}
		p(x,y) = c_{20} \, x^2+c_{11}\, xy+c_{02}\, y^2+c_{10}\, x+c_{01}\,y+c_{00}
	\end{equation*}
	with
	\begin{align*}
		c_{20} &= \frac{1}{6}
		\Big[
		f(-1,1)+f(-1,0)+f(-1,1)-f(0,-1)-f(0,0)-f(0,1)+f(1,-1)+f(1,0)+f(1,1)
		\Big],\\
		c_{11} &= \frac{1}{4}
		\Big[
		f(-1,-1)-f(-1,1)-f(1,-1)+f(1,1)
		\Big],\\
		c_{02} &= \frac{1}{6}
		\Big[
		f(-1,-1)-f(-1,0)+f(-1,1)+f(0,-1)-f(0,0)+f(0,1)+f(1,-1)-f(1,0)+f(1,1)
		\Big],\\
		c_{10} &= \frac{1}{6}
		\Big[
		-f(-1,-1)-f(-1,0)-f(-1,1)+f(1,-1)+f(1,0)+f(1,1)
		\Big],\\
		c_{01} &= \frac{1}{6}
		\Big[
		-f(-1,-1)+f(-1,1)-f(0,-1)+f(0,1)-f(1,-1)+f(1,1)
		\Big],\\
		c_{00} &= \frac{1}{9}
		\Big[
		-f(-1,-1)+2f(-1,0)-f(-1,1)+2f(0,-1)+5f(0,0)
		\\
		& \qquad \quad +2f(0,1)-f(1,-1)+2f(1,0)-f(1,1)
		\Big].
	\end{align*}
	After obtaining $p(x,y)$, we consider two cases:
	\begin{itemize}
		\item[\textbf{Case 1}:] If $c_{11}^2 \neq 4 c_{20} c_{02}$, then the minimum of $p(x,y)$ over $I$ can be expressed as
		\[
		\min_{I} f(x,y) = 
		\frac
		{c_{02} c_{10}^2 + c_{20} c_{01}^2 - c_{11} c_{10} c_{01}}
		{c_{11}^2 - 4 c_{20} c_{02}} 
		+ 
		c_{00}.
		\]
		\item[\textbf{Case 2}:] If $c_{11}^2 = 4 c_{20} c_{02}$, then the minimum of $p(x,y)$ over $I$ is always achieved on the edges of $I$, and one may find the minimum by applying the 1D minimum-finding procedure (see Remark \ref{rmk:3097}) along all the edges.
	\end{itemize} 
\end{remark}

{\bf New Estimation Approach II}: Another new estimation approach based on the monotonicity-preserving technique is given below. 
First, the global entropy bound is estimated from the initial data ${\bf U}_0(x,y)$:
\begin{equation*}
	S_{\min}^{\rm G,MP}:=\min\limits_{x,y}S(\BU_{0}(x,y)).
\end{equation*}
The local entropy lower bound is estimated as 
\begin{align*}
	&(S_{\min}^{\rm L,MP})_{j,k}^{n+1}:=S_{\min}^{\rm G,MP} \vee \min\{S_{j+1/2,k}^{\min,\pm},\ S_{j-1/2,k}^{\min,\pm}, S_{j,k+1/2}^{\min,\pm},\ S_{j,k-1/2}^{\min,\pm},\ S_{\rm avg}^{\min}\},\\
	&S_{\rm avg}^{\min}:=\min\{S(\overline{\BU}_{j-1,k}^n),\ S(\overline{\BU}_{j,k-1}^n),\ S(\overline{\BU}_{j,k}^n),\ S(\overline{\BU}_{j+1,k}^n),\ S(\overline{\BU}_{j,k+1}^n)\},
\end{align*}
where the quantities $S_{j+1/2,k}^{\min,\pm}$ and $S_{j,k+1/2}^{\min,\pm}$ are computed through the monotonicity-preserving technique \cite{suresh1997accurate}:
\begin{align*}
	&S_{j+1/2,k}^{\min,-}:=\max\Big[\min\Big(S(\overline{\bf U}_{j,k}^n),\ S(\overline{\bf U}_{j+1,k}^n),\ S_{j+1/2,k}^{\rm MD}\Big),\ 
	\min\Big(S(\overline{\bf U}_{j,k}^n),\ S_{j+1/2,k}^{\rm UL},\ S_{j+1/2,k}^{\rm LC}\Big)\Big],\\
	&d_{j,k}^{x}:=S(\overline{\bf U}_{j+1,k}^n)-2S(\overline{\bf U}_{j,k}^n)+S(\overline{\bf U}_{j-1,k}^n),\quad
	S_{j+1/2,k}^{\rm UL}:=S(\overline{\bf U}_{j,k}^n)+\alpha\Big(S(\overline{\bf U}_{j,k}^n)-S(\overline{\bf U}_{j-1,k}^n)\Big),\\
	&d_{j+1/2,k}^{{\rm M4X},x}:={\rm minmod}\ \left(4d_{j,k}^{x}-d_{j+1,k}^{x},\ 4d_{j+1,k}^{x}-d_{j,k}^{x},\ d_{j,k}^{x},\ d_{j+1,k}^{x}\right),\\
	&S_{j+1/2,k}^{\rm MD}:=\frac{1}{2}\left(S(\overline{\bf U}_{j,k}^n)+ S(\overline{\bf U}_{j+1,k}^n)-d_{j+1/2,k}^{{\rm M4X},x}\right),\\
	&S_{j+1/2,k}^{\rm LC}:=S(\overline{\bf U}_{j,k}^n)+\frac{1}{2}\left(S(\overline{\bf U}_{j,k}^n)-S(\overline{\bf U}_{j-1,k}^n)\right)+\frac{\beta}{3}d_{j-1/2,k}^{{\rm M4X},x},
	\\
	&S_{j,k+1/2}^{\min,-}:=\max\Big[\min\Big(S(\overline{\bf U}_{j,k}^n),\ S(\overline{\bf U}_{j,k+1}^n),\ S_{j,k+1/2}^{\rm MD}\Big),\ 
	\min\Big(S(\overline{\bf U}_{j,k}^n),\ S_{j,k+1/2}^{\rm UL},\ S_{j,k+1/2}^{\rm LC}\Big)\Big],\\
	&d_{j,k}^{y}:=S(\overline{\bf U}_{j,k+1}^n)-2S(\overline{\bf U}_{j,k}^n)+S(\overline{\bf U}_{j,k-1}^n),\quad
	S_{j,k+1/2}^{\rm UL}:=S(\overline{\bf U}_{j,k}^n)+\alpha\Big(S(\overline{\bf U}_{j,k}^n)-S(\overline{\bf U}_{j,k-1}^n)\Big),\\
	&d_{j,k+1/2}^{{\rm M4X},y}:={\rm minmod}\ \left(4d_{j,k}^{y}-d_{j,k+1}^{y},\ 4d_{j,k+1}^{y}-d_{j,k}^{y},\ d_{j,k}^{y},\ d_{j,k+1}^{y}\right),\\
	&S_{j,k+1/2}^{\rm MD}:=\frac{1}{2}\left(S(\overline{\bf U}_{j,k}^n)+ S(\overline{\bf U}_{j+1,k}^n)-d_{j+1/2,k}^{{\rm M4X},x}\right),\\
	&S_{j,k+1/2}^{\rm LC}:=S(\overline{\bf U}_{j,k}^n)+\frac{1}{2}\left(S(\overline{\bf U}_{j,k}^n)-S(\overline{\bf U}_{j,k-1}^n)\right)+\frac{\beta}{3}d_{j,k+1/2}^{{\rm M4X},y}.
\end{align*}
The quantities $S_{j-1/2,k}^{\min,+}$ and $S_{j,k-1/2}^{\min,+}$ are based on a symmetric estimation. 
Similar to the 1D case, we introduce a shock-aware relaxation parameter $\nu_{j,k}$ to deal with the overestimation problem:
\begin{equation} \label{2DMPrelax}
	(\widehat{S}_{\min}^{\rm L,MP})_{j,k}^{n+1}:=(1-\nu_{j,k})S_{\min}^{\rm G,MP}+\nu_{j,k} (S_{\min}^{\rm L,MP})_{j,k}^{n+1}
\end{equation}
with
\begin{align*}
	&\nu_{j,k} := \min\left\{\frac{\max\{\Delta_j p_{j,k},\ \Delta_j p_{j-1,k},\ \Delta_k p_{j,k},\ \Delta_k p_{j,k-1}\}}{\min\Big\{p(\overline{\BU}_{j-1,k}^n),\ p(\overline{\BU}_{j,k-1}^n),\ p(\overline{\BU}_{j,k}^n),\ p(\overline{\BU}_{j+1,k}^n),\ p(\overline{\BU}_{j,k+1}^n)\Big\}}, 1\right\},\\
	&\Delta_j p_{j,k}=\Big|p(\overline{\BU}_{j+1,k}^n)-p(\overline{\BU}_{j,k}^n)\Big|,\quad
	\Delta_k p_{j,k}=\Big|p(\overline{\BU}_{j,k+1}^n)-p(\overline{\BU}_{j,k}^n)\Big|.
\end{align*}

\begin{remark} \label{rmk:3270}
	As discussed in \Cref{rmk:1050}, one can use an SSP multistep (or Runge--Kutta) temporal  discretization method to achieve high-order accuracy in time. In this case, the estimation of the local minimum entropy at $t^{n+1}$ should also incorporate local information from all the involved time levels (stages). 
	For example, if the third-order SSP multistep method \eqref{eq:2469} is used, the estimations for the local minimum entropy on 1D and 2D Cartesian meshes are given by
	\[
	(\hat{S}^{\rm L}_{\min})^{n+1}_{j} = 
	\hat{S}^{\rm G}_{\min} \vee \min \left\{
	\begin{aligned}
		&\min_{I_j} \mathcal{P}^{\tt 1D}_{I_j} S(\BU_j^n(x)),
		&&\min_{I_j} \mathcal{P}^{\tt 1D}_{I_j} S(\BU_j^{n-3}(x)),
		\\
		&\min_{I_{j\pm 1}} \mathcal{P}^{\tt 1D}_{I_{j\pm 1}} S(\BU_{j\pm 1}^n(x)),
		&&\min_{I_{j\pm 1}} \mathcal{P}^{\tt 1D}_{I_{j\pm 1}} S(\BU_{j\pm 1}^{n-3}(x))
	\end{aligned}
	\right\}
	\]
	and
	\begin{equation*}
		(\hat{S}^{\rm L}_{\min})^{n+1}_{j,k} 
		= 
		\hat{S}^{\rm G}_{\min} \vee
		\min \left\{
		\begin{aligned}
			&\min_{I_{j,k}} \mathcal{P}^{\tt 2D}_{I_{j,k}} S(\BU_{j,k}^n(x,y)),
			&&\min_{I_{j,k}} \mathcal{P}^{\tt 2D}_{I_{j,k}} S(\BU_{j,k}^{n-3}(x,y)),
			\\
			&\min_{I_{j\pm 1,k}} \mathcal{P}^{\tt 2D}_{I_{j\pm 1,k}} S(\BU_{j\pm 1,k}^n(x,y)),
			&&\min_{I_{j\pm 1,k}} \mathcal{P}^{\tt 2D}_{I_{j\pm 1,k}} S(\BU_{j\pm 1,k}^{n-3}(x,y)),
			\\
			&\min_{I_{j,k\pm 1}} \mathcal{P}^{\tt 2D}_{I_{j,k\pm 1}} S(\BU_{j,k\pm 1}^n(x,y)),
			&&\min_{I_{j,k\pm 1}} \mathcal{P}^{\tt 2D}_{I_{j,k\pm 1}} S(\BU_{j,k\pm 1}^{n-3}(x,y))
		\end{aligned}
		\right\}.
	\end{equation*}
	For the monotonicity-preserving estimation approach, we use the following formulas for the third-order SSP multistep method, combined with \eqref{1DMPrelax} and \eqref{2DMPrelax}:
	\begin{equation*}
		(\widehat{S}_{\min}^{\rm L,MP})_j^{n+1}:=(1-\nu_j)S_{\min}^{\rm G,MP}+\nu_j \min\left\{(S_{\min}^{\rm L,MP})_j^{n+1},\ (S_{\min}^{\rm L,MP})_j^{n-2}\right\}
	\end{equation*}
	and
	\begin{equation*} 
		(\widehat{S}_{\min}^{\rm L,MP})_{j,k}^{n+1}:=(1-\nu_{j,k})S_{\min}^{\rm G,MP}+\nu_{j,k} \min\left\{(S_{\min}^{\rm L,MP})_{j,k}^{n+1},\ (S_{\min}^{\rm L,MP})_{j,k}^{n-2}\right\}.
	\end{equation*}
\end{remark}


\section{Numerical Results}
\label{section:5}

In this section, we present a series of numerical experiments to assess the accuracy and robustness of our high-order entropy-preserving discontinuous Galerkin (DG) schemes for the relativistic Euler system \eqref{eq:RHD3D} with various equations of state (EOSs) on both 1D and 2D rectangular meshes. 
We evaluate both globally and locally entropy-preserving DG schemes and compare our proposed entropy bound estimation techniques with existing approaches from the literature, as discussed in Section \ref{sec:6}. The comparison focuses on their accuracy and effectiveness in suppressing spurious oscillations. For further validation, we also examine high-order bound-preserving (BP) DG schemes, which are obtained by omitting Step 3 (i.e., the entropy-preserving limiter) in \Cref{sec:limiter}. 
For time integration, we employ the third-order strong-stability-preserving (SSP) multistep method \eqref{eq:2469}. Unless otherwise stated, the CFL numbers are set as $1/10$, $1/20$, and $1/30$ for the $\mathbb{P}^1$-, $\mathbb{P}^2$-, and $\mathbb{P}^3$-based DG methods, respectively.
It should be noted that, in all numerical experiments reported in this section, no additional limiters (e.g., TVD, TVB, or WENO limiters) are employed to mitigate spurious oscillations.

\begin{example}[1D Accuracy Test]\label{Ex5.1.1} 
	This example verifies the accuracy of our high-order entropy-preserving DG schemes for the 1D relativistic Euler system. We consider the {IP-EOS} \eqref{hEOS2} with the smooth initial condition
	\begin{equation*}
		\textbf{V}(x,0) = \left(1+0.99999\sin(2\pi x),0.9,1\right)^\top, \qquad x \in [0,1],
	\end{equation*}
	subject to periodic boundary conditions. The exact solution is given by
	\begin{equation*}
		\textbf{V}(x,t) = \left(1+0.99999\sin\left(2\pi(x-0.9t)\right),0.9,1\right)^\top.
	\end{equation*}
	
	We compute the numerical solution up to $t = 0.2$ using $\mathbb{P}^1$-, $\mathbb{P}^2$-, and $\mathbb{P}^3$-based locally entropy-preserving DG methods with different approaches for estimating the entropy lower bounds. For the $\mathbb{P}^3$-based DG method, the time step is adjusted to match the spatial accuracy using
		$
		\Delta t = \frac{1}{30} \Delta x^{\frac{4}{3}}.
		$ 
	 The $l^1$, $l^2$, and $l^\infty$ errors of the rest-mass density $\rho$ on various mesh resolutions are presented in \Cref{table:1DAT_IPEOS}. 
	The results confirm that our locally entropy-preserving schemes achieve the expected convergence rates, demonstrating that the proposed limiters, equipped with the new entropy bound estimation approaches \eqref{eq:3089} (abbreviated as ``{\tt New I}'') and \eqref{1DMPrelax} (abbreviated as ``{\tt New II}''), preserve high-order accuracy.
	
	In contrast, adopting the existing formulas \eqref{LV Final star} (abbreviated as ``{\tt LI}'') and \eqref{CJK} (abbreviated as ``{\tt CJK}'') for entropy bound estimation leads to noticeable degradation of convergence rates in the $\mathbb{P}^2$ and $\mathbb{P}^3$ cases. This indicates that \eqref{LV Final star} and \eqref{CJK} tend to overestimate the entropy bound, thereby compromising the accuracy of the high-order schemes.

\begin{figure}[!htb]
	\centering
	\begin{subfigure}[t]{0.32\textwidth}
		\centering
		\includegraphics[trim = 17 0 0 0,clip,width=\textwidth]{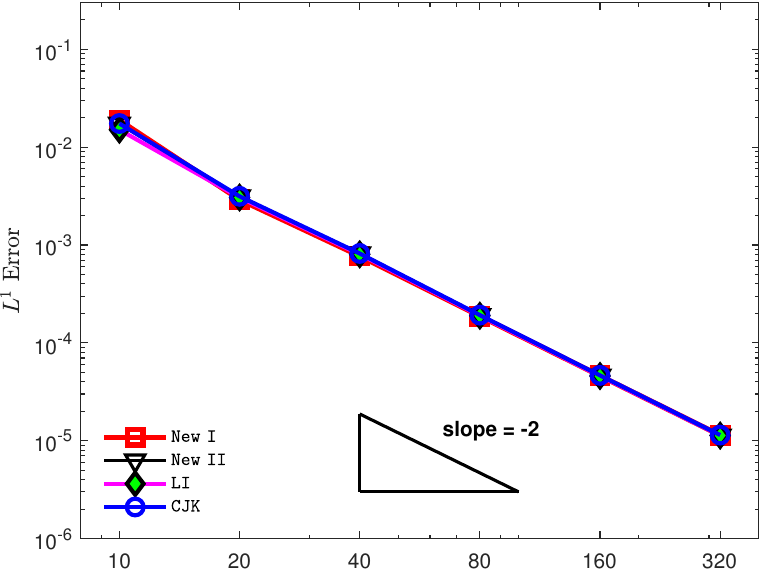}
		\caption{$\mathbb{P}^1$, $l^1$ errors.}
	\end{subfigure}
	\begin{subfigure}[t]{0.32\textwidth}
		\centering
		\includegraphics[trim = 17 0 0 0,clip,width=1\textwidth]{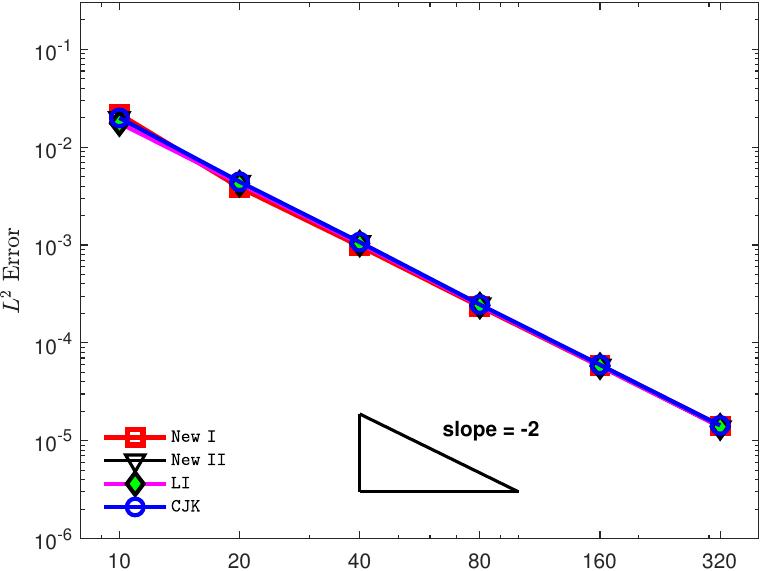}
		\caption{$\mathbb{P}^1$, $l^2$ errors.}
	\end{subfigure}
	\begin{subfigure}[t]{.32\textwidth}
		\centering
		\includegraphics[trim = 17 0 0 0,clip,width=1\textwidth]{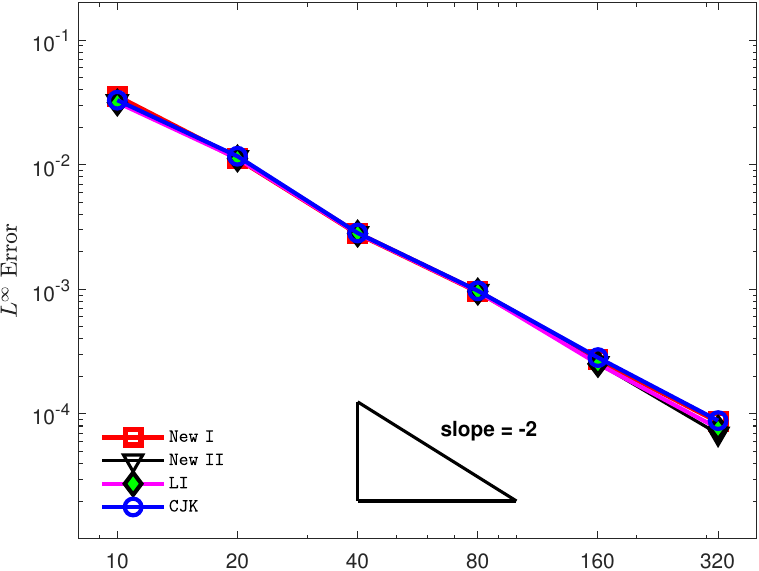}
		\caption{$\mathbb{P}^1$, $l^{\infty}$ errors.}
	\end{subfigure}
    \begin{subfigure}[t]{0.32\textwidth}
		\centering
		\includegraphics[trim = 17 0 0 0,clip,width=\textwidth]{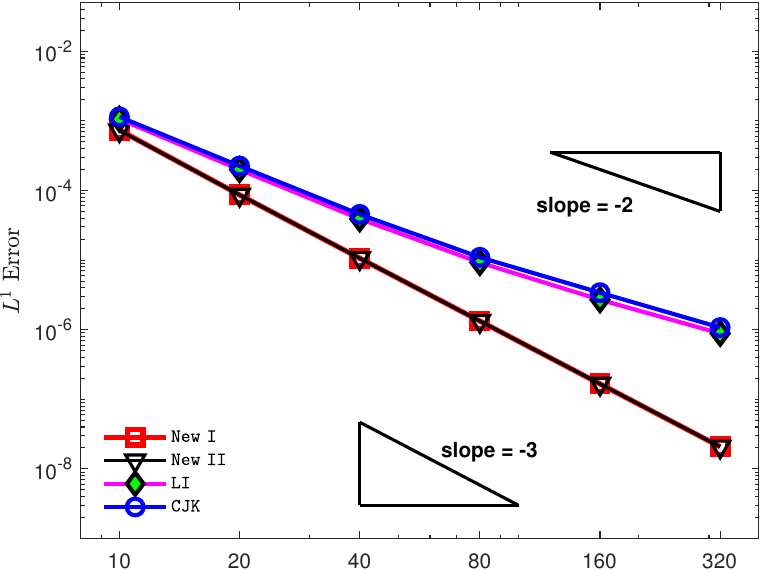}
		\caption{$\mathbb{P}^2$, $l^1$ errors.}
	\end{subfigure}
	\begin{subfigure}[t]{0.32\textwidth}
		\centering
		\includegraphics[trim = 17 0 0 0,clip,width=1\textwidth]{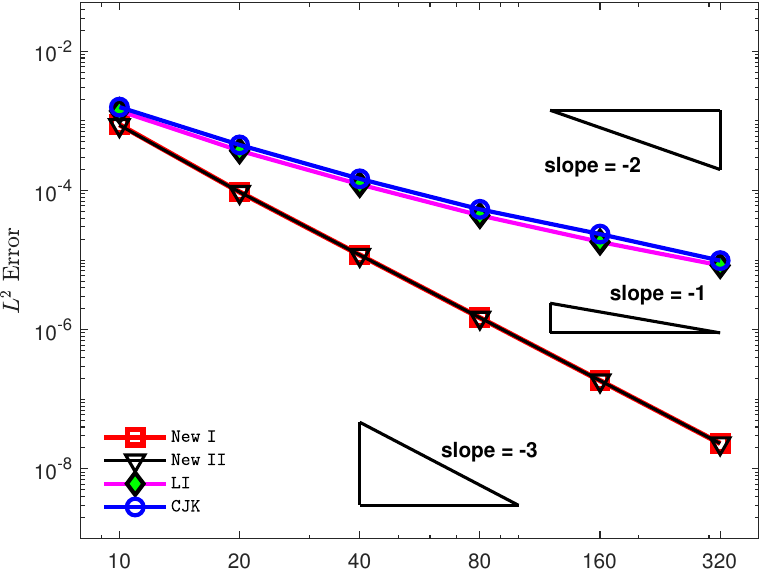}
		\caption{$\mathbb{P}^2$, $l^2$ errors.}
	\end{subfigure}
	\begin{subfigure}[t]{.32\textwidth}
		\centering
		\includegraphics[trim = 17 0 0 0,clip,width=1\textwidth]{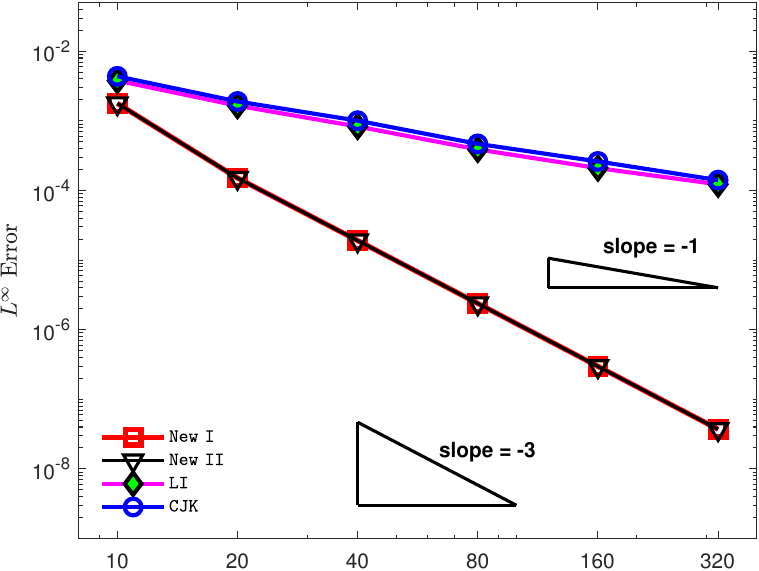}
		\caption{$\mathbb{P}^2$, $l^{\infty}$ errors.}
	\end{subfigure}
    \begin{subfigure}[t]{0.32\textwidth}
		\centering
		\includegraphics[trim = 17 0 0 0,clip,width=\textwidth]{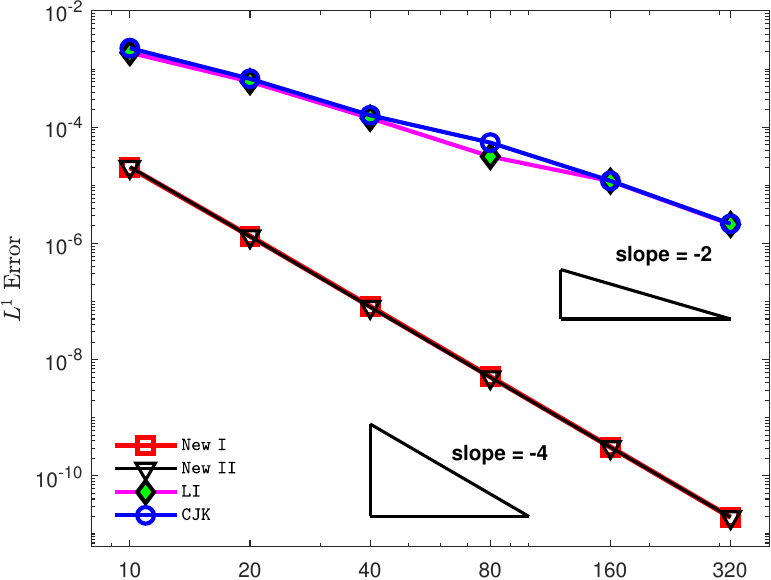}
		\caption{$\mathbb{P}^3$, $l^1$ errors.}
	\end{subfigure}
	\begin{subfigure}[t]{0.32\textwidth}
		\centering
		\includegraphics[trim = 17 0 0 0,clip,width=1\textwidth]{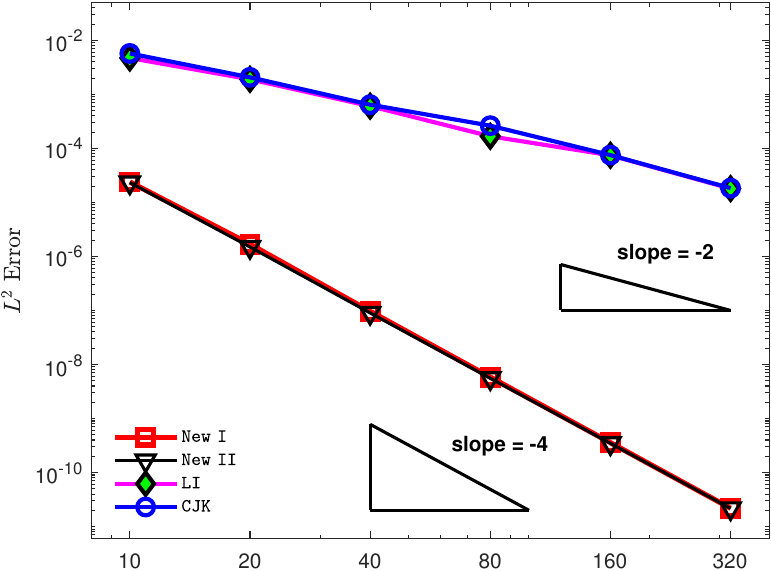}
		\caption{$\mathbb{P}^3$, $l^2$ errors.}
	\end{subfigure}
	\begin{subfigure}[t]{.32\textwidth}
		\centering
		\includegraphics[trim = 17 0 0 0,clip,width=1\textwidth]{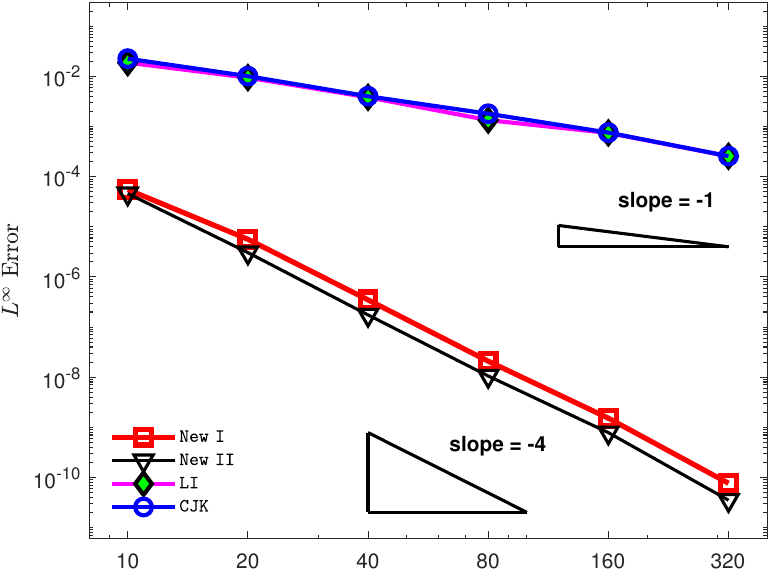}
		\caption{$\mathbb{P}^3$, $l^{\infty}$ errors.}
	\end{subfigure}
	\caption{Example \ref{Ex5.1.1}: $l^1$, $l^2$, and $l^\infty$ errors in $\rho$ for locally entropy-preserving DG methods on various mesh resolutions, comparing different entropy bound estimation approaches \eqref{LV Final star}, \eqref{CJK}, \eqref{eq:3089}, and \eqref{1DMPrelax}.}\label{table:1DAT_IPEOS}
\end{figure}

Table \ref{CPU:1D} presents the CPU time required for the locally entropy-preserving schemes using four different local entropy bound estimation approaches on a mesh with 320 uniform cells. Since the {\tt LI} approach \eqref{LV Final star}, the {\tt CJK} approach \eqref{CJK}, and the {\tt New I} approach \eqref{eq:3089} involve quadrature points within each cell, they naturally require higher computational costs compared to the {\tt New II} approach \eqref{1DMPrelax}, which relies solely on cell average information. The results in Table \ref{CPU:1D} confirm this expectation, demonstrating that the estimation approach \eqref{1DMPrelax} is the most computationally efficient.

\begin{table}[htb]
	\centering
	\caption{CPU time in seconds for simulating Example \ref{Ex5.1.1} up to $t=0.2$ with $320$ cells.}
	\label{CPU:1D}
	\renewcommand\arraystretch{1.2}
	\scalebox{0.8}{
		\begin{tabular}{c||c|c|c||c|c|c||c|c|c||c|c|c}
			\hline
			 estimation approach & \multicolumn{3}{c||}{{\tt LI} \eqref{LV Final star}} & \multicolumn{3}{c||}{{\tt CJR} \eqref{CJK}} & \multicolumn{3}{c||}{{\tt New I} \eqref{eq:3089}} & \multicolumn{3}{c}{{\tt New II} \eqref{1DMPrelax}}\\
			\hline
			$\mathbb{P}^k$ element & $\mathbb{P}^1$ & $\mathbb{P}^2$ & $\mathbb{P}^3$ & $\mathbb{P}^1$ & $\mathbb{P}^2$ & $\mathbb{P}^3$ & $\mathbb{P}^1$ & $\mathbb{P}^2$ & $\mathbb{P}^3$ & $\mathbb{P}^1$ & $\mathbb{P}^2$ & $\mathbb{P}^3$\\
			\hline
			CPU time & 4 & 10 & 135.5 & 4 & 10 & 135.5 & 4 & 10 & 139.75 & 4 & 9 & 108.75 \\
			\hline
		\end{tabular}
	}
\end{table}

\end{example}

\begin{example}[1D Riemann Problem I]\label{Ex5.1.2}
	To assess the capability of the proposed high-order entropy-preserving DG schemes in resolving complex wave structures and suppressing spurious oscillations, we consider the following discontinuous initial condition:
	\begin{equation*}
		\textbf{V}(x,0) =
		\begin{cases}
			(0.8,0.5,8)^\top, & 0\leq x<0.5, \\
			(1,0,1)^\top, & 0.5\leq x \leq 1,
		\end{cases}
	\end{equation*}
	subject to outflow boundary conditions and with the TM-EOS \eqref{hEOS3}. The computational domain $[0,1]$ is discretized into 400 uniform cells. 
	The exact solution to this Riemann problem consists of a left-moving rarefaction wave, a contact discontinuity, and a right-moving shock wave. 
	
	Figure \ref{Fig:1D_RP_TMEOS} presents the numerical solutions of the rest-mass density $\rho$ at $t=0.4$, obtained using the $\mathbb{P}^3$-based DG methods under three different settings:  
	(a) without the entropy-preserving limiter,  
	(b) with the global entropy-preserving limiter, and  
	(c) with the local entropy-preserving limiter incorporating our new estimation ({\tt New I}) approach  \eqref{eq:3089}.  
As shown in Figure \ref{Fig:1D_RP_TMEOS_d}, the proposed local entropy-preserving DG scheme accurately captures the wave structures while effectively preventing significant spurious oscillations. 
In contrast, Figure \ref{Fig:1D_RP_TMEOS_a} demonstrates that the scheme without any entropy-preserving limiters introduces noticeable nonphysical oscillations ahead of the shock and an overshoot near the contact discontinuity at $x = 0.74$. 
This behavior aligns with findings in \cite{khobalatte1994maximum, zhang2012minimum, JIANG2018}, which suggest that enforcing the MEP is crucial for mitigating spurious oscillations.
Furthermore, while the global entropy-preserving limiter helps reduce oscillations, minor artifacts persist near the contact discontinuity. 
In contrast, the local entropy-preserving limiter exhibits superior performance, effectively suppressing nonphysical oscillations while preserving the sharpness of the wave structures.

\begin{figure}[!htb]
	\centering
	\begin{subfigure}[t]{0.32\textwidth}
		\centering
		\includegraphics[width=\textwidth]{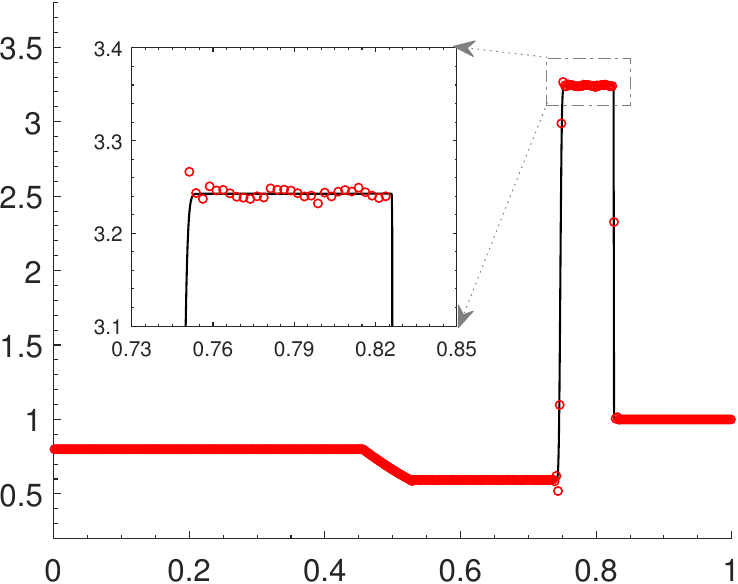}
		\caption{Without entropy-preserving limiter.}
		\label{Fig:1D_RP_TMEOS_a}
	\end{subfigure}
	\begin{subfigure}[t]{0.32\textwidth}
		\centering
		\includegraphics[width=1\textwidth]{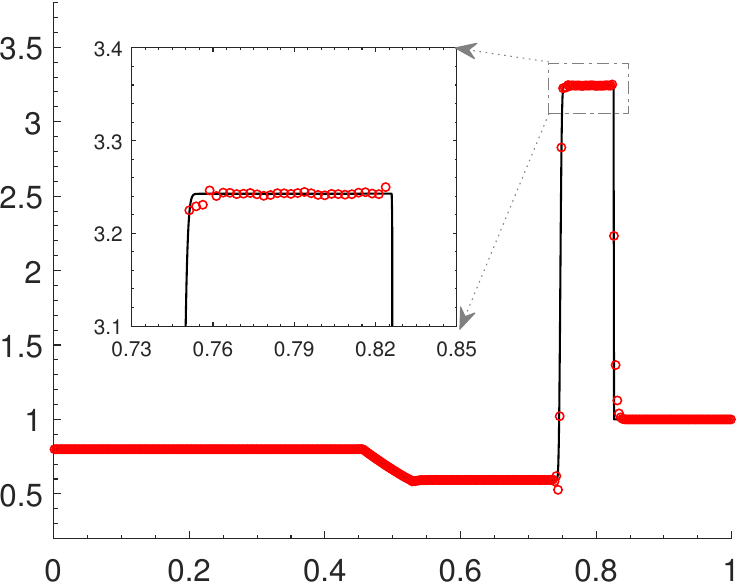}
		\caption{With global entropy-preserving limiter.}
		\label{Fig:1D_RP_TMEOS_b}
	\end{subfigure}
	\begin{subfigure}[t]{0.32\textwidth}
		\centering
		\includegraphics[width=1\textwidth]{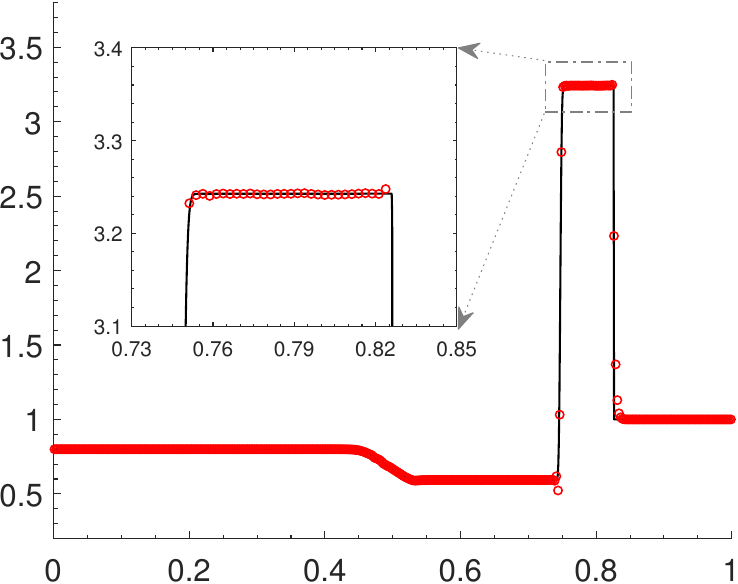}
		\caption{Local entropy-preserving with {\tt New I} \eqref{eq:3089}.}
		\label{Fig:1D_RP_TMEOS_d}
	\end{subfigure}
	\caption{Example \ref{Ex5.1.2}: the rest-mass density $\rho$ obtained by the $\mathbb{P}^3$-based DG methods with and without different entropy limiters (red circles) and reference solution (black solid line), $t = 0.4$, $\dx = 1/400$.}\label{Fig:1D_RP_TMEOS}
\end{figure}

\end{example}

\begin{example}{(1D Riemann Problem II)}\label{Ex5.1.3}
	This problem is initialized with the following discontinuous data:
\begin{equation*}
	\textbf{V}(x,0)=
	\begin{cases}
		(1.25,0,0.8)^\top,&  0\leq x<0.5,\\
		(0.1,0,0.1)^\top,& 0.5\leq x \leq 1,
	\end{cases}
\end{equation*}
	and employs the {RC-EOS \eqref{hEOS1}}. The exact solution consists of a rarefaction wave, a contact discontinuity, and a shock wave. The computational domain $[0,1]$ is discretized using $400$ uniform cells with outflow boundary conditions.
	
	Figure \ref{Fig:1D_RP2_RCEOS} presents the numerical results of the rest-mass density $\rho$ at $t = 0.4$, obtained using the $\mathbb{P}^3$-based DG methods with three different configurations:
	(a) without the entropy-preserving limiters,
	(b) with the local entropy-preserving limiter and the {\tt New I} entropy bound estimation approach \eqref{eq:3089}, and
	(c) with the local entropy-preserving limiter and the {\tt New II} entropy bound estimation approach \eqref{eq:3089}, \eqref{1DMPrelax}. 
	The result without using the entropy-preserving limiter exhibits slight overshoots and undershoots near the discontinuity at approximately $x = 0.66$, whereas the locally entropy-preserving schemes effectively suppress these spurious oscillations, demonstrating their robustness in maintaining physically consistent solutions.

\begin{figure}[!htb]
	\centering
	\begin{subfigure}[t]{0.32\textwidth}
		\centering
		\includegraphics[width=\textwidth]{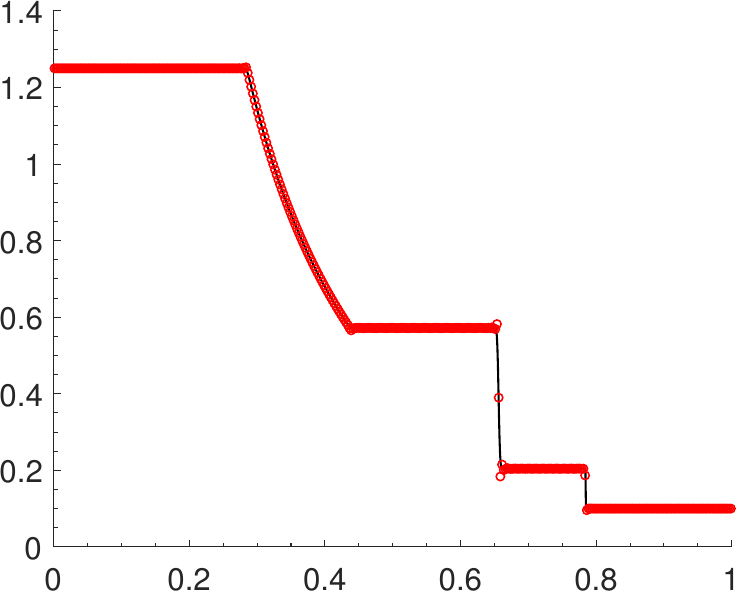}
		\caption{Without entropy-preserving limiter.}
	\end{subfigure}
	\begin{subfigure}[t]{0.32\textwidth}
		\centering
		\includegraphics[width=1\textwidth]{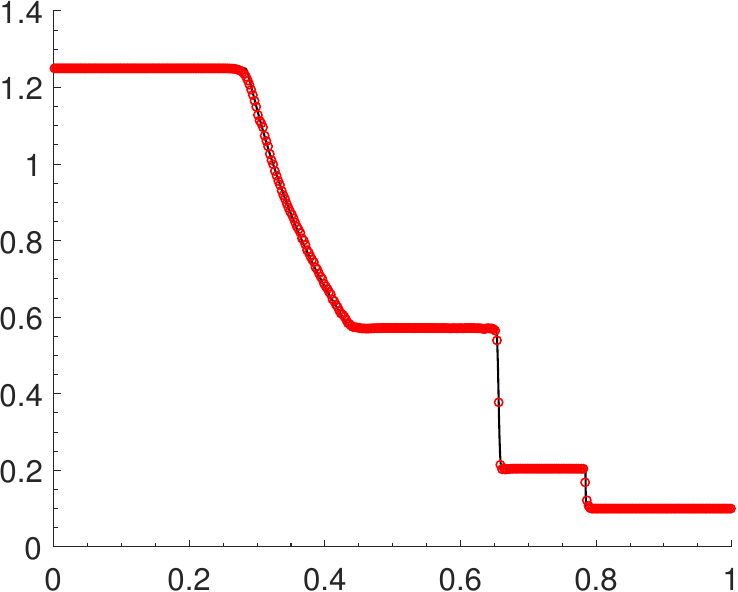}
		\caption{Local entropy-preserving with {\tt New I} \eqref{eq:3089}.}
	\end{subfigure}
    \begin{subfigure}[t]{0.32\textwidth}
		\centering
		\includegraphics[width=1\textwidth]{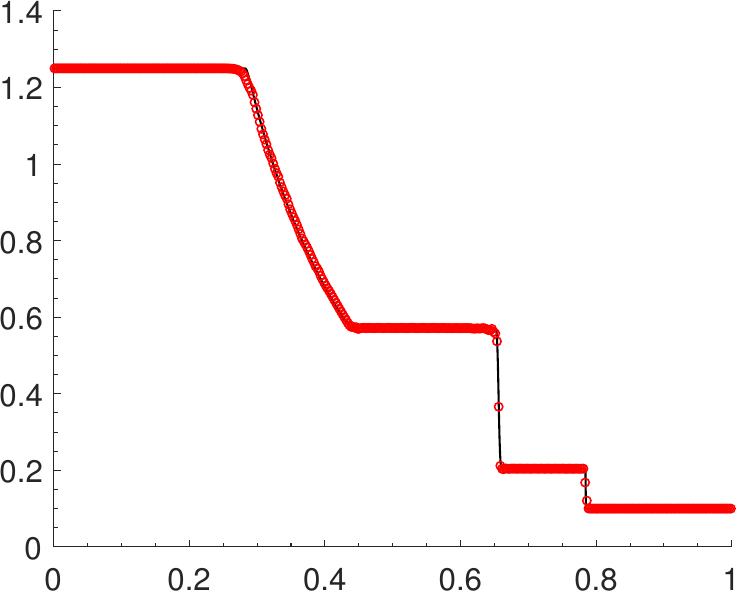}
		\caption{Local entropy-preserving with {\tt New II} \eqref{1DMPrelax}.}
	\end{subfigure}
	\caption{Example \ref{Ex5.1.3}: the rest-mass density $\rho$ obtained by the $\mathbb{P}^3$-based DG methods without or with entropy-preserving limiters (red circles) and reference solution (black solid line), $t = 0.4$, $\dx = 1/400$.}\label{Fig:1D_RP2_RCEOS}
\end{figure}

\end{example}

\begin{example}{(2D Accuracy Test)}\label{Ex5.2.1}
	This example examines the accuracy of the proposed high-order entropy-preserving DG methods for a smooth 2D problem with periodic boundary conditions over the computational domain $[0,1]^2$. 
	The exact solution is given by
	\begin{equation*}
		\textbf{V}(x,y,t) = \left(1+0.99999\sin\left(2\pi(x+y-0.99\sqrt{2}t)\right),\frac{0.99}{\sqrt{2}},\frac{0.99}{\sqrt{2}},0.01\right)^\top,
	\end{equation*}
	which describes a high-speed relativistic flow with low density and low pressure, propagating at an angle of $45^\circ$ relative to the $x$-axis. This test case is conducted using the {RC-EOS \eqref{hEOS1}}.
	
	The computational domain $[0,1]^2$ is discretized into a uniform grid of $N \times N$ square cells, where $N \in \{10,20,40,80,160,320\}$. For the $\mathbb{P}^3$-based DG method, the time step is adjusted to match the spatial accuracy using
		$
		\Delta t = \frac{1}{30} {\left(\frac{\Delta x}{2}\right)}^{\frac{4}{3}}.
		$  
	Figure \ref{table:2DAT_RCTMEOS} reports the $l^1$, $l^2$, and $l^{\infty}$ errors in the rest-mass density $\rho$ at $t = 0.2$, along with the corresponding convergence orders. 
	The results confirm that the proposed locally entropy-preserving DG methods, incorporating either the {\tt New I} or {\tt New II} approaches for entropy bound estimation, achieve the expected order of accuracy. 
	For comparison, we also include results obtained using the locally entropy-preserving schemes with the {\tt LI} approach \eqref{LV Final star} and {\tt CJK} approach \eqref{CJK} for entropy bound estimations. 
	The observed accuracy degradation in these cases suggests that these estimations are overly restrictive, leading to an overestimation of the entropy bounds and preventing the schemes from achieving the desired convergence rates.

\begin{figure}[!htb]
	\centering
	\begin{subfigure}[t]{0.32\textwidth}
		\centering
		\includegraphics[trim = 17 0 0 0,clip,width=\textwidth]{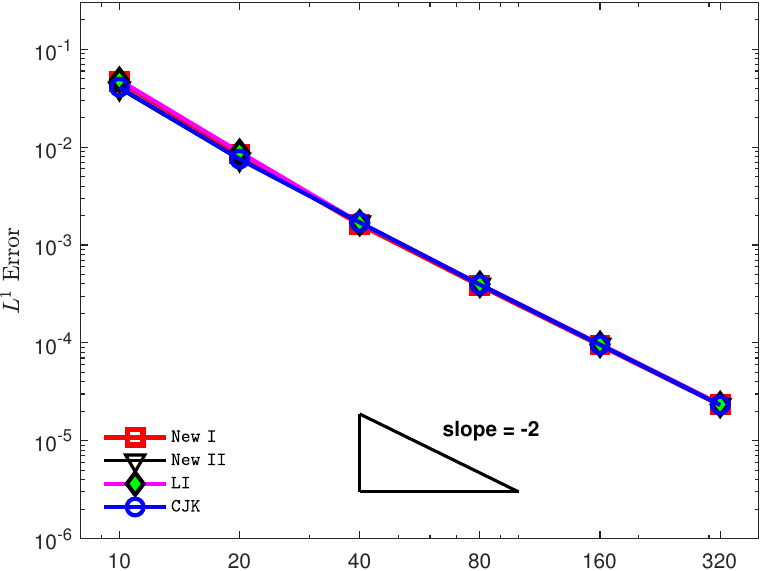}
		\caption{$\mathbb{P}^1$, $l^1$ errors.}
	\end{subfigure}
	\begin{subfigure}[t]{0.32\textwidth}
		\centering
		\includegraphics[trim = 17 0 0 0,clip,width=1\textwidth]{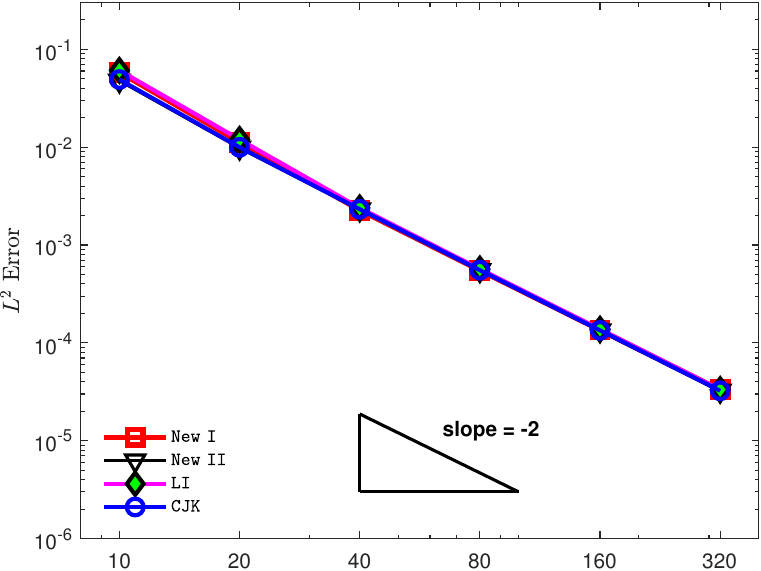}
		\caption{$\mathbb{P}^1$, $l^2$ errors.}
	\end{subfigure}
	\begin{subfigure}[t]{.32\textwidth}
		\centering
		\includegraphics[trim = 17 0 0 0,clip,width=1\textwidth]{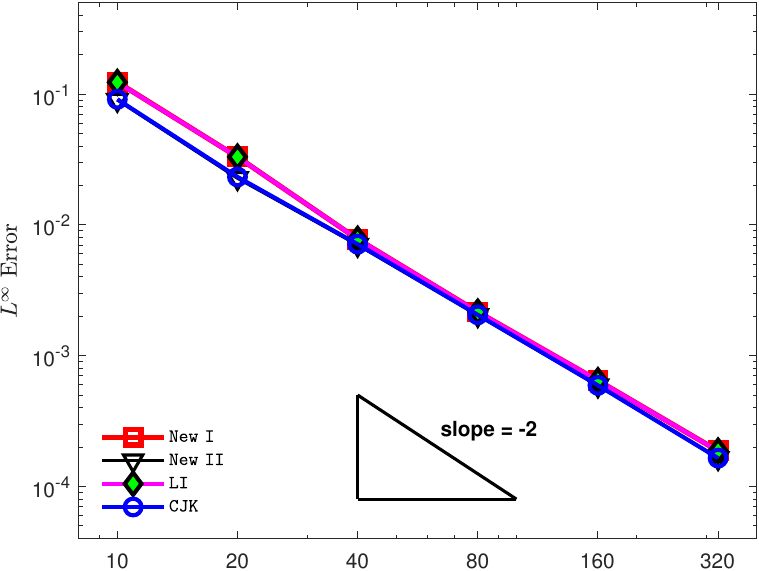}
		\caption{$\mathbb{P}^1$, $l^{\infty}$ errors.}
	\end{subfigure}
    \begin{subfigure}[t]{0.32\textwidth}
		\centering
		\includegraphics[trim = 17 0 0 0,clip,width=\textwidth]{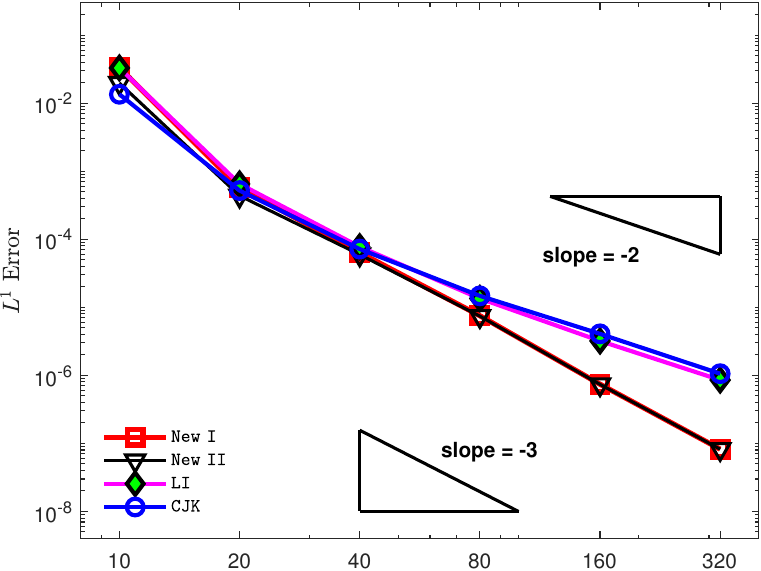}
		\caption{$\mathbb{P}^2$, $l^1$ errors.}
	\end{subfigure}
	\begin{subfigure}[t]{0.32\textwidth}
		\centering
		\includegraphics[trim = 17 0 0 0,clip,width=1\textwidth]{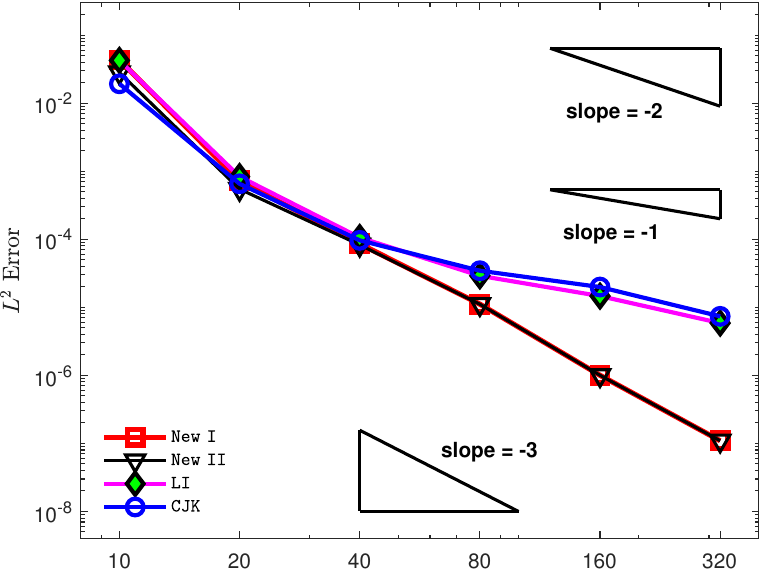}
		\caption{$\mathbb{P}^2$, $l^2$ errors.}
	\end{subfigure}
	\begin{subfigure}[t]{.32\textwidth}
		\centering
		\includegraphics[trim = 17 0 0 0,clip,width=1\textwidth]{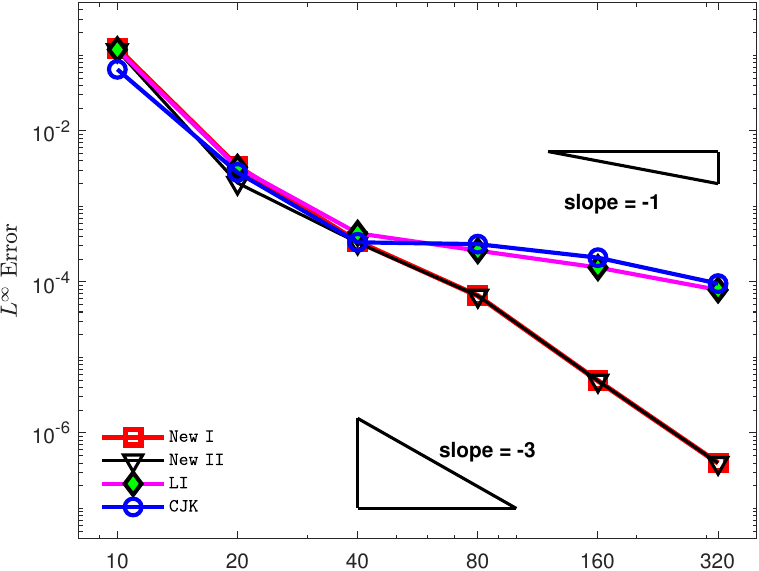}
		\caption{$\mathbb{P}^2$, $l^{\infty}$ errors.}
	\end{subfigure}
    \begin{subfigure}[t]{0.32\textwidth}
		\centering
		\includegraphics[trim = 17 0 0 0,clip,width=\textwidth]{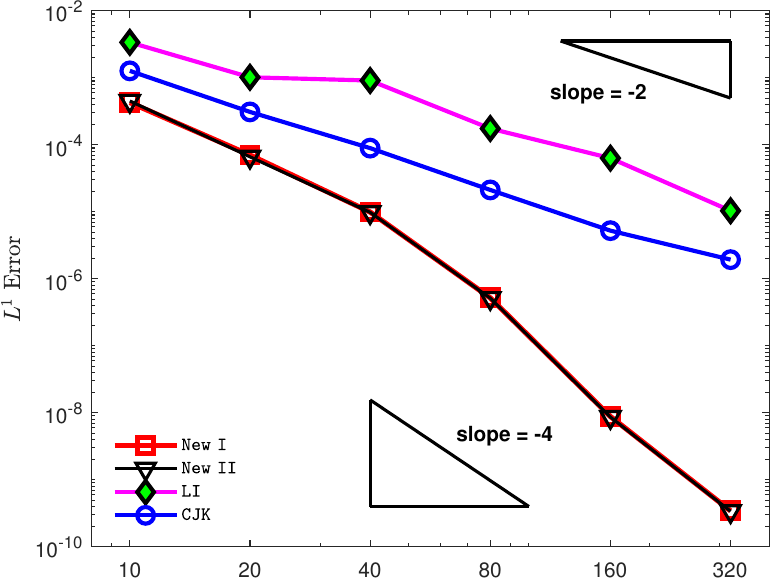}
		\caption{$\mathbb{P}^3$, $l^1$ errors.}
	\end{subfigure}
	\begin{subfigure}[t]{0.32\textwidth}
		\centering
		\includegraphics[trim = 17 0 0 0,clip,width=1\textwidth]{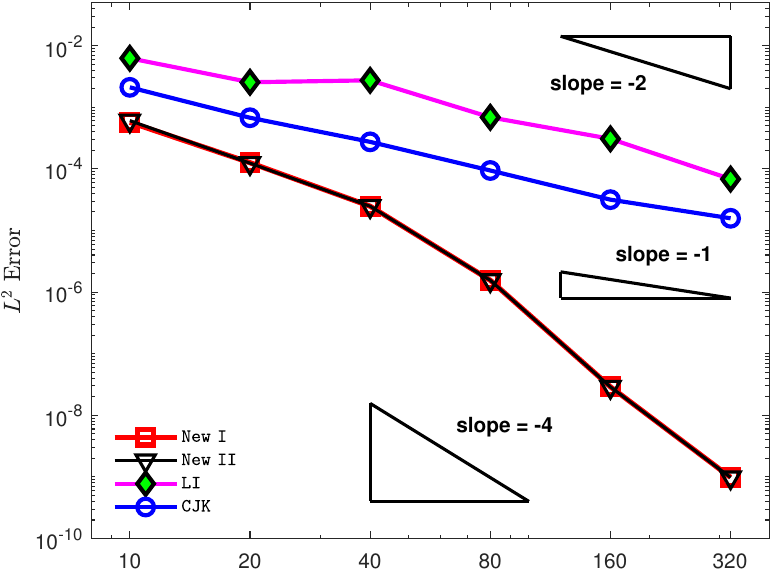}
		\caption{$\mathbb{P}^3$, $l^2$ errors.}
	\end{subfigure}
	\begin{subfigure}[t]{.32\textwidth}
		\centering
		\includegraphics[trim = 17 0 0 0,clip,width=1\textwidth]{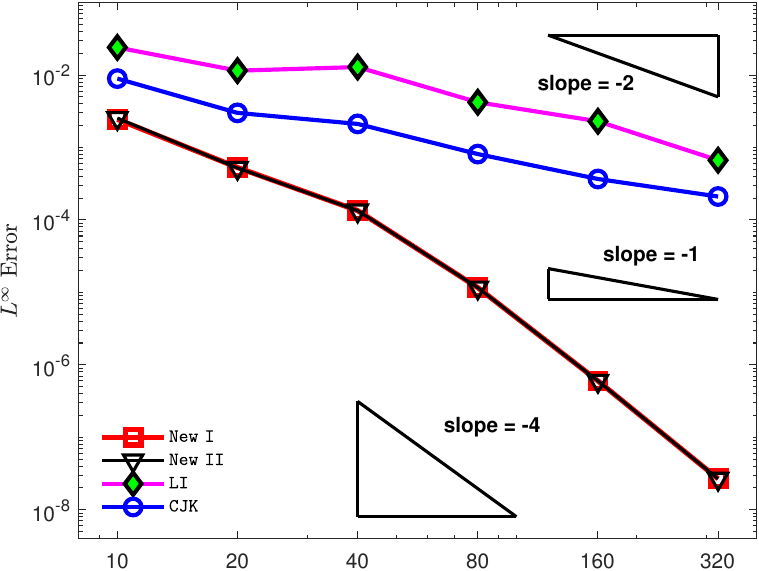}
		\caption{$\mathbb{P}^3$, $l^{\infty}$ errors.}
	\end{subfigure}
	\caption{Example \ref{Ex5.2.1}: $l^1$, $l^2$, and $l^\infty$ errors in $\rho$ for locally entropy-preserving DG methods on various mesh resolutions, comparing different entropy bound estimation approaches \eqref{LV Final star}, \eqref{CJK}, \eqref{2D_CUI}, and \eqref{2DMPrelax}.}\label{table:2DAT_RCTMEOS}
\end{figure}

\end{example}

We now test two 2D Riemann problems for the relativistic Euler system, originally proposed in \cite{WuTang2015} and \cite{he2012adaptive1}, respectively.

\begin{example}{(2D Riemann Problem I).}\label{Ex5.2.2}
	The initial condition for the first 2D Riemann problem is given by 
	\begin{equation*}
		\textbf{V}(x,y,0) = 
		\begin{cases}
			(0.1, 0, 0, 20)^\top, & x>0,\ y>0,\\
			(0.00414329639576, 0.9946418833556542, 0, 0.05)^\top, & x<0,\ y>0,\\
			(0.01, 0, 0, 0.05)^\top, & x<0,\ y<0,\\
			(0.00414329639576, 0, 0.9946418833556542, 0.05)^\top, & x> 0, \ y<0.
		\end{cases}
	\end{equation*} 
	The computational domain is set to $[-1,1]^2$ with outflow boundary conditions imposed on all boundaries. In this example, we adopt the ideal EOS \eqref{ID-EOS}. 
	The maximum fluid velocity is close to the speed of light, presenting significant challenges for numerical simulation. 
	
	We employ a uniform mesh with $250\times250$ cells to assess the performance of the $\mathbb{P}^3$-based DG schemes, with or without entropy-preserving techniques. The numerical solutions at $t = 0.8$ are displayed in Figure \ref{Fig:2D_RP1_RCEOS}. The results show that the  scheme without an entropy-preserving limiter procedures severe nonphysical oscillations. In contrast, the (locally) entropy-preserving schemes effectively suppress these oscillations, significantly improving the robustness, fidelity, and reliability of the numerical solutions.
	
	Table \ref{CPU:2D} presents a comparison of CPU time for locally entropy-preserving schemes with various entropy bound estimation approaches. Consistent with the 1D case, our {\tt New II} approach \eqref{2DMPrelax} proves to be the most efficient, as it relies solely on cell averages rather than evaluating entropy at all quadrature points.
\end{example}

\begin{figure}[!htb]
	\centering
	\begin{subfigure}[t]{.32\textwidth}
		\centering
		\includegraphics[width=1\textwidth]{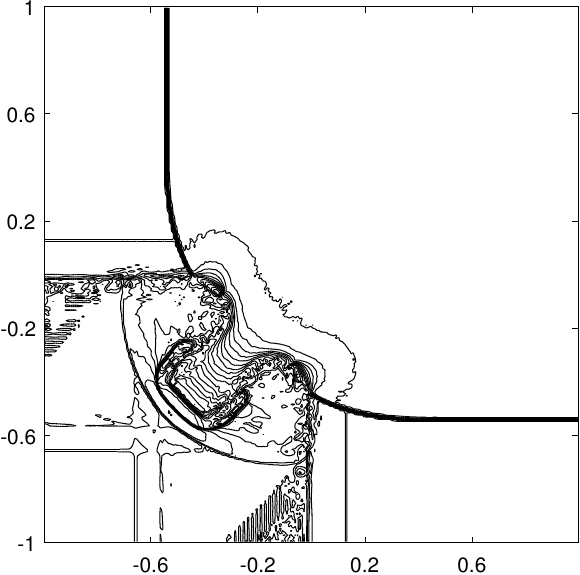}
		\caption{Without entropy-preserving limiter.}
	\end{subfigure}
	\begin{subfigure}[t]{.32\textwidth}
		\centering
		\includegraphics[width=1\textwidth]{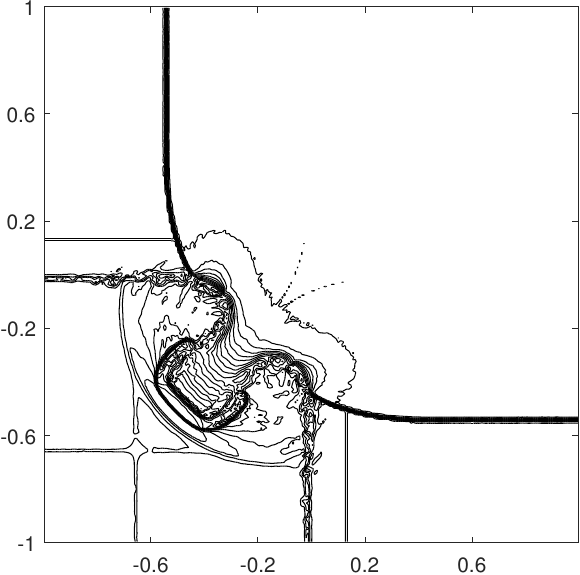}
		\caption{With global entropy-preserving limiter.}
	\end{subfigure}
	\begin{subfigure}[t]{.32\textwidth}
		\centering
		\includegraphics[width=1\textwidth]{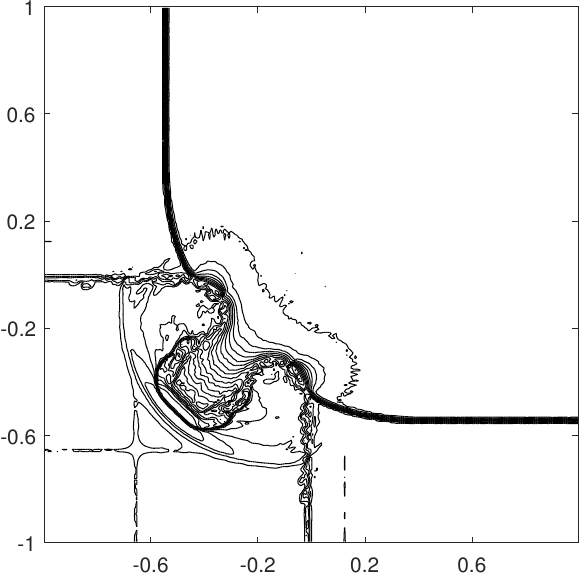}
		\caption{Local entropy-preserving with {\tt LI} \eqref{LV Final star}.}
	\end{subfigure}
	\begin{subfigure}[t]{.32\textwidth}
		\centering
		\includegraphics[width=1\textwidth]{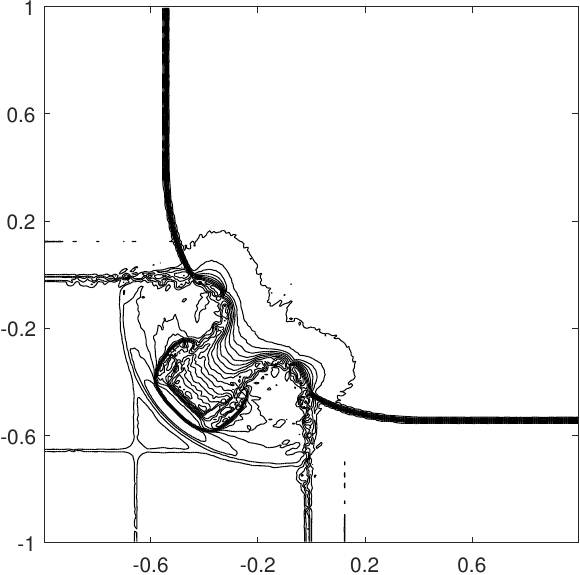}
		\caption{Local entropy-preserving with {\tt CJK} \eqref{CJK}.}
	\end{subfigure}
	\begin{subfigure}[t]{.32\textwidth}
		\centering
		\includegraphics[width=1\textwidth]{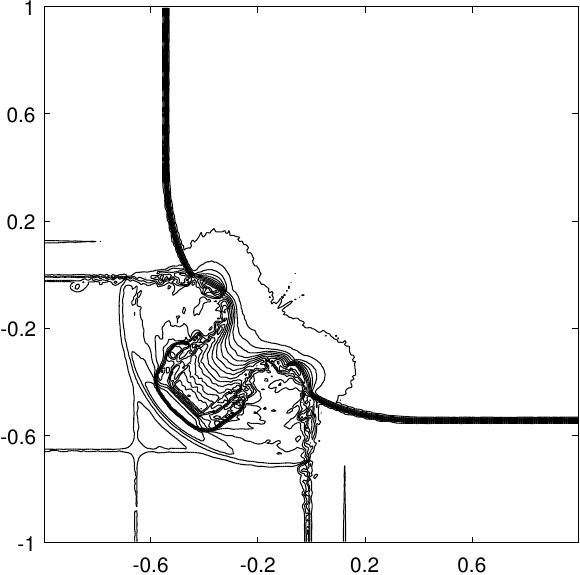}
		\caption{Local entropy-preserving with {\tt New I} \eqref{2D_CUI}.}
	\end{subfigure}
	\begin{subfigure}[t]{.32\textwidth}
		\centering
		\includegraphics[width=1\textwidth]{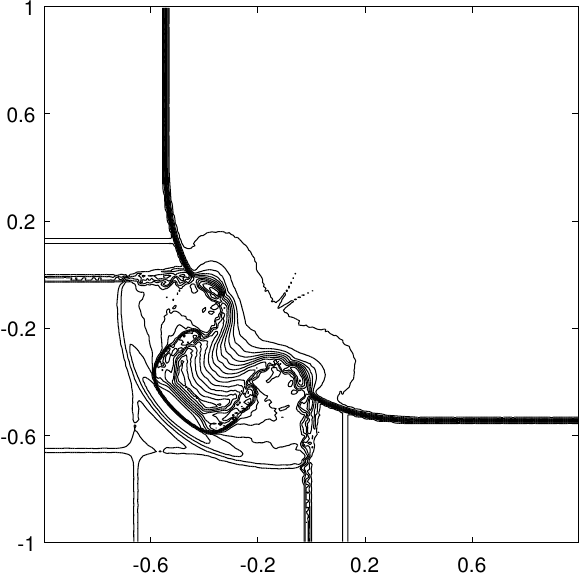}
		\caption{Local entropy-preserving with {\tt New II} \eqref{2DMPrelax}.}
	\end{subfigure}
	\caption{Example \ref{Ex5.2.2}: Contour plots of $\log_{10}\rho$ at $t=0.8$ obtained using $\mathbb{P}^3$-based DG methods without or with entropy-preserving limiters on a uniform mesh with {$\dx = \dy = 1/125$}. 18 equally spaced contour lines are displayed.
	}\label{Fig:2D_RP1_RCEOS}
\end{figure}

\begin{table}[htb]
	\centering
	\caption{CPU time in minutes for simulating Example \ref{Ex5.2.2} up to $t=0.8$ with $250\times 250$ cells.}
	\label{CPU:2D}
	\renewcommand\arraystretch{1.2}
	\scalebox{0.8}{
		\begin{tabular}{ccccc}
			\toprule
			estimation approach & {{\tt LI}} \eqref{LV Final star} & { {\tt CJR}} \eqref{CJK} & {\tt New I} \eqref{2D_CUI} & {\tt New II} \eqref{2DMPrelax}\\
			\midrule
			CPU time & 334.45 & 310.35 & 356.00 & 213.48\\
			\bottomrule
		\end{tabular}
	}
\end{table}

\begin{example}{(2D Riemann Problem II).}\label{Ex5.2.4}  
	The initial condition for the second 2D Riemann problem is given by  
	\begin{equation*}
		\textbf{V}(x,y,0) =
		\begin{cases}
			(0.035145216124503, 0, 0, 0.162931056509027)^\top, & x>0, \ y>0,\\
			(0.1, 0.7, 0, 1)^\top, & x<0, \ y>0,\\
			(0.5, 0, 0, 1)^\top, & x<0, \ y<0,\\
			(0.1, 0, 0.7, 1)^\top, & \text{otherwise}.
		\end{cases}
	\end{equation*}
	This problem involves two contact discontinuities and two shock waves, which interact and merge over time to form a characteristic ``mushroom cloud'' structure at the center of the domain $[-1,1]^2$. In this example, we consider the  {IP-EOS} \eqref{hEOS2}.
	
	The computational domain $[-1,1]^2$ is discretized into a uniform $200\times200$ mesh, with outflow boundary conditions imposed on all sides. We apply the $\mathbb{P}^3$-based DG schemes both with and without entropy-preserving limiters. The numerical results for the rest-mass density $\rho$ at $t=0.8$ are shown in Figure \ref{Fig:2D_RP3_IPEOS}. It is evident that the scheme without an entropy limiter suffers from significant spurious oscillations, whereas the locally entropy-preserving techniques effectively suppress these numerical artifacts, ensuring a more reliable and accurate solution.

	\begin{figure}[!htb]
		\centering
		\begin{subfigure}[t]{.32\textwidth}
			\centering
			\includegraphics[width=1\textwidth]{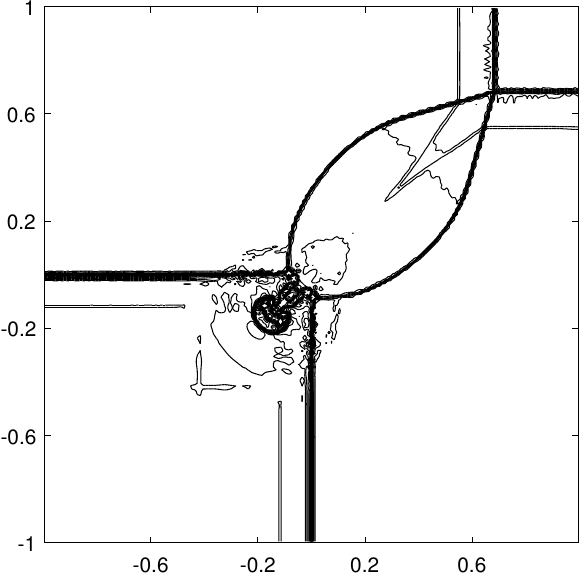}
			\caption{Without entropy-preserving limiter.}
		\end{subfigure}
		\begin{subfigure}[t]{.32\textwidth}
			\centering
			\includegraphics[width=1\textwidth]{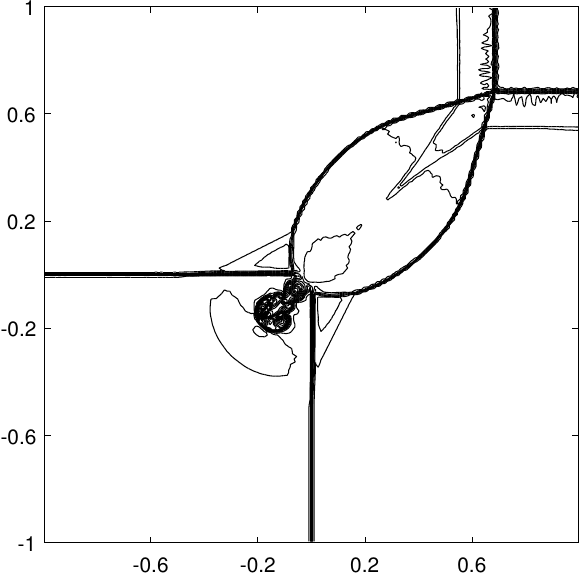}
			\caption{With global entropy-preserving limiter.}
		\end{subfigure}
		\begin{subfigure}[t]{.32\textwidth}
			\centering
			\includegraphics[width=1\textwidth]{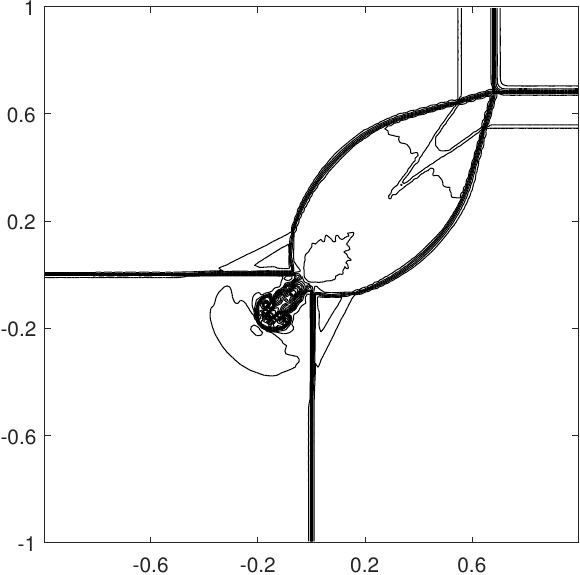}
			\caption{Local entropy-preserving with {\tt LI} \eqref{LV Final star}.}
		\end{subfigure}
		\begin{subfigure}[t]{.32\textwidth}
			\centering
			\includegraphics[width=1\textwidth]{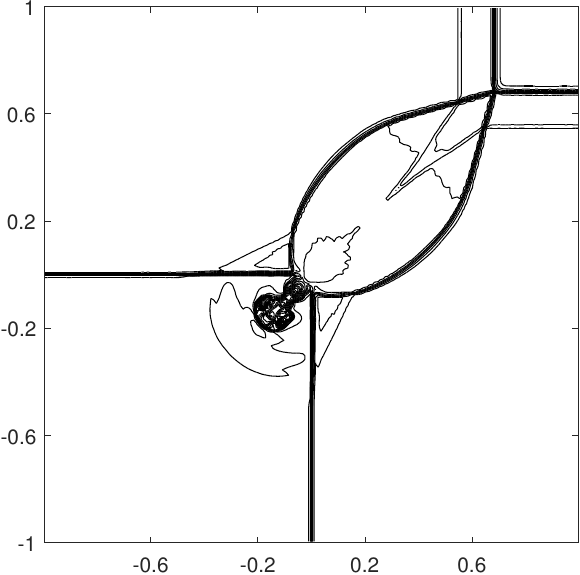}
			\caption{Local entropy-preserving with {\tt CJK} \eqref{CJK}.}
		\end{subfigure}
		\begin{subfigure}[t]{.32\textwidth}
			\centering
			\includegraphics[width=1\textwidth]{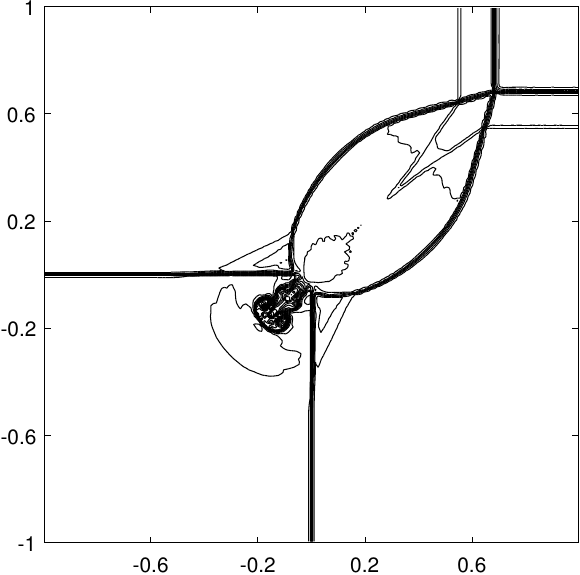}
			\caption{Local entropy-preserving with {\tt New I} \eqref{2D_CUI}.}
		\end{subfigure}
		\begin{subfigure}[t]{.32\textwidth}
			\centering
			\includegraphics[width=1\textwidth]{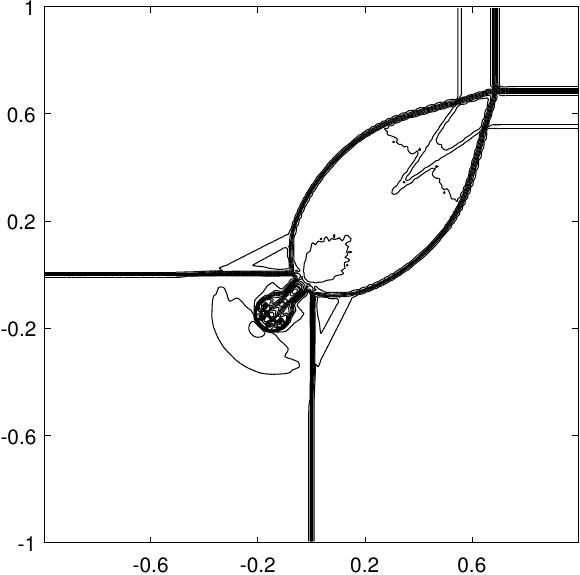}
			\caption{Local entropy-preserving with {\tt New II} \eqref{2DMPrelax}.}
		\end{subfigure}
		\caption{Example \ref{Ex5.2.4}: Contour plots of $\log_{10}\rho$ at $t=0.8$ obtained using $\mathbb{P}^3$-based DG methods without or with entropy-preserving limiters on a uniform mesh with $\dx=\dy=1/100$. 20 equally spaced contour lines are displayed.}\label{Fig:2D_RP3_IPEOS}
	\end{figure}

\end{example}

\begin{example}{(Shock-Bubble Interaction Problem).}\label{Ex5.2.5}  
	This example simulates the interaction between a shock wave and a bubble within the rectangular computational domain $[0,325]\times[-45,45]$. Reflective boundary conditions are imposed at $y=-45$ and $y=45$, while inflow and outflow boundary conditions are applied at $x=325$ and $x=0$, respectively. The problem setup follows \cite{he2012adaptive1}, except that we consider the {RC-EOS} \eqref{hEOS1} in this study.
	
	At the initial time $t = 0$, a shock wave is positioned along $x = 265$, with the left and right states given by
	\begin{equation*}
		\mathbf{V}(x,y,0) = 
		\begin{cases}
			(1, 0, 0, 0.05)^\top, & x<265,\\
			(1.941272902134272,-0.200661045980881, 0, 0.15)^\top, & x>265.
		\end{cases}
	\end{equation*}
	On the left of the shock, a bubble with equal pressure but lower density is initially centered at $(215,0)^\top$:
	\begin{equation*}
		\textbf{V}(x,y,0) = (0.38, 0, 0, 0.05)^\top, \quad \textrm{for} \quad \sqrt{(x-215)^2+y^2} \leq 25.
	\end{equation*}
	
	To investigate the evolution of the shock-bubble interaction, we employ the $\mathbb{P}^2$-based locally entropy-preserving DG scheme to compute the numerical solution up to $t = 450$. The schlieren images of the rest-mass density $\rho$ are displayed in Figure \ref{Fig:2D_shockbb1_RCEOS_035b}. As observed, the proposed locally entropy-preserving DG scheme successfully captures the intricate flow structures without introducing spurious oscillations, while also resolving the Kelvin-Helmholtz instability along the bubble interface with high resolution.
	
	For comparison, the results obtained using the $\mathbb{P}^2$-based DG scheme without entropy-preserving techniques are shown in Figure \ref{Fig:2D_shockbb1_RCEOS_035a}. The absence of entropy-preserving mechanisms leads to severe nonphysical oscillations, particularly in the regions affected by the shock-bubble interaction. These results highlight the crucial role of MEP preservation in maintaining the physical fidelity of the numerical solution.
	
\end{example}

		\begin{figure}[!htb]
			\centering
			\begin{subfigure}[t]{0.32\textwidth}
				\centering
				\includegraphics[width=\textwidth, trim = 0 0 0 0, clip]{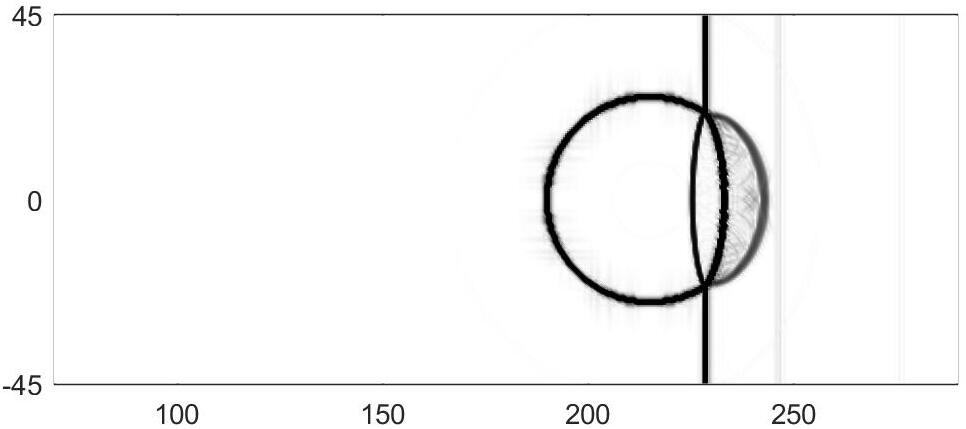}
			\end{subfigure}
			\begin{subfigure}[t]{0.32\textwidth}
				 		\centering
				\includegraphics[width=\textwidth, trim = 0 0 0 0, clip]{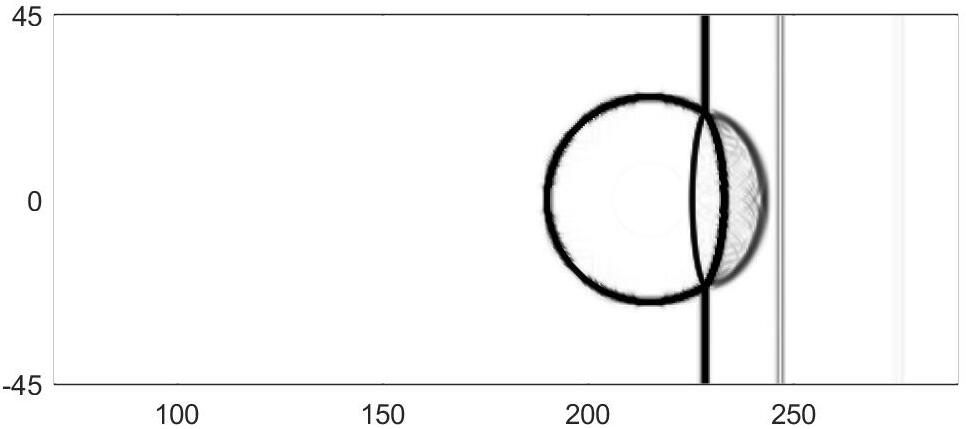}
			\end{subfigure}
			\begin{subfigure}[t]{0.32\textwidth}
				 		\centering
				\includegraphics[width=\textwidth, trim = 0 0 0 0, clip]{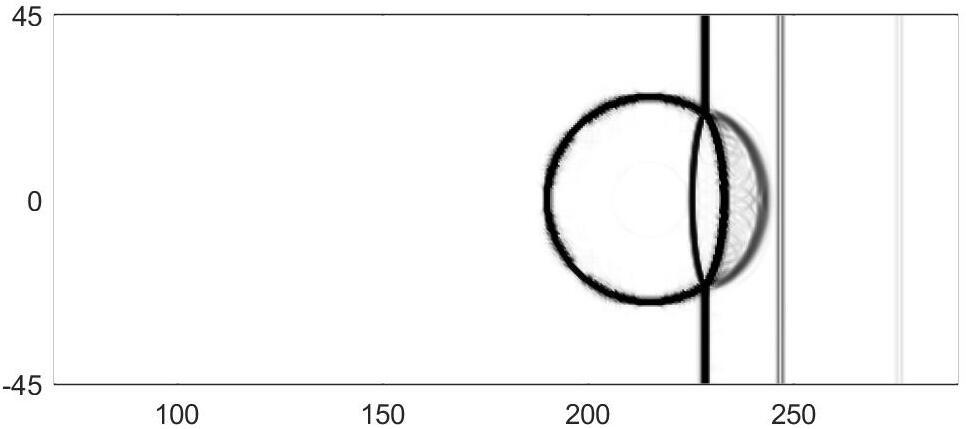}
			\end{subfigure}
			\begin{subfigure}[t]{0.32\textwidth}
				\centering
				\includegraphics[width=\textwidth, trim = 0 0 0 0, clip]{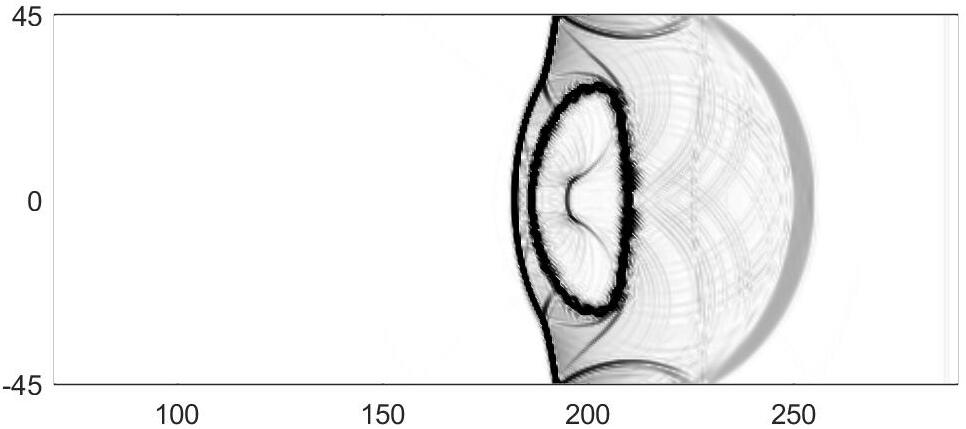}
			\end{subfigure}
			\begin{subfigure}[t]{0.32\textwidth}
						\centering
				\includegraphics[width=\textwidth, trim = 0 0 0 0, clip]{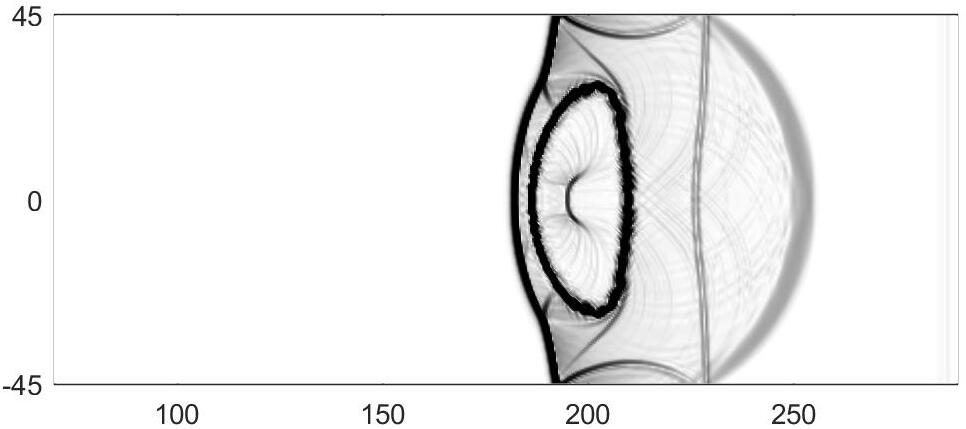}
			\end{subfigure}
			\begin{subfigure}[t]{0.32\textwidth}
						\centering
				\includegraphics[width=\textwidth, trim = 0 0 0 0, clip]{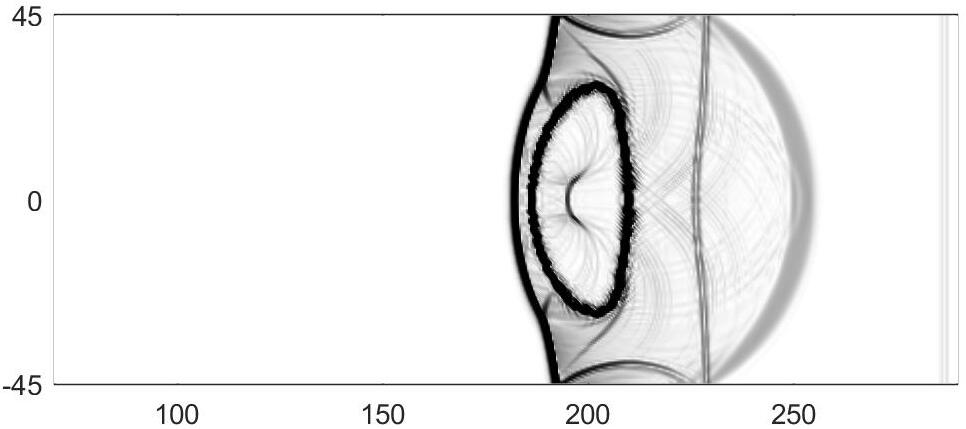}
			\end{subfigure}
			\begin{subfigure}[t]{0.32\textwidth}
				\centering
				\includegraphics[width=\textwidth, trim = 0 0 0 0, clip]{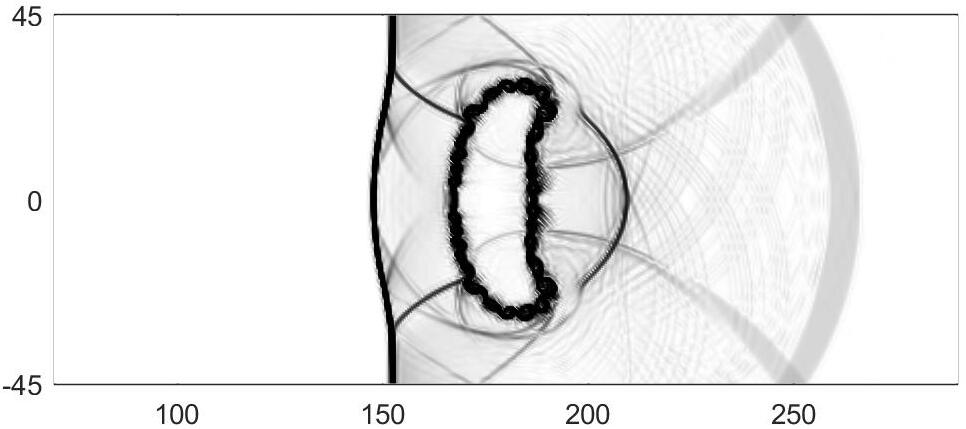}
			\end{subfigure}
			\begin{subfigure}[t]{0.32\textwidth}
				 		\centering
				\includegraphics[width=\textwidth, trim = 0 0 0 0, clip]{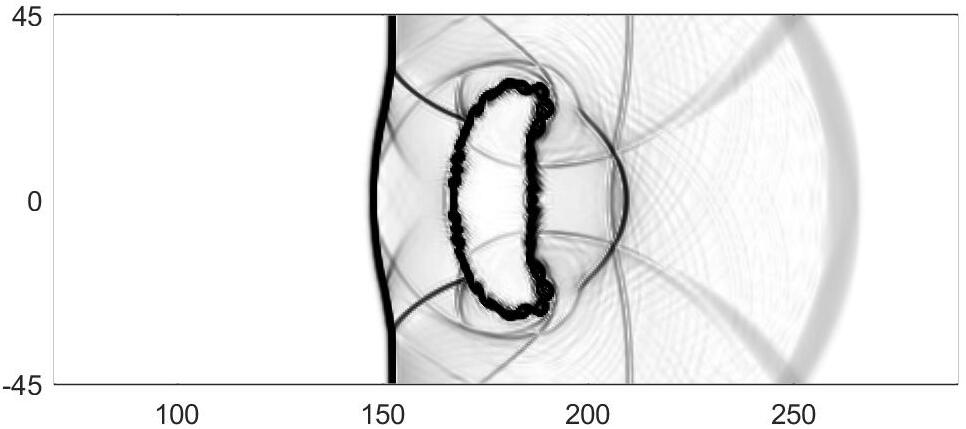}
			\end{subfigure}
			\begin{subfigure}[t]{0.32\textwidth}
						\centering
				\includegraphics[width=\textwidth, trim = 0 0 0 0, clip]{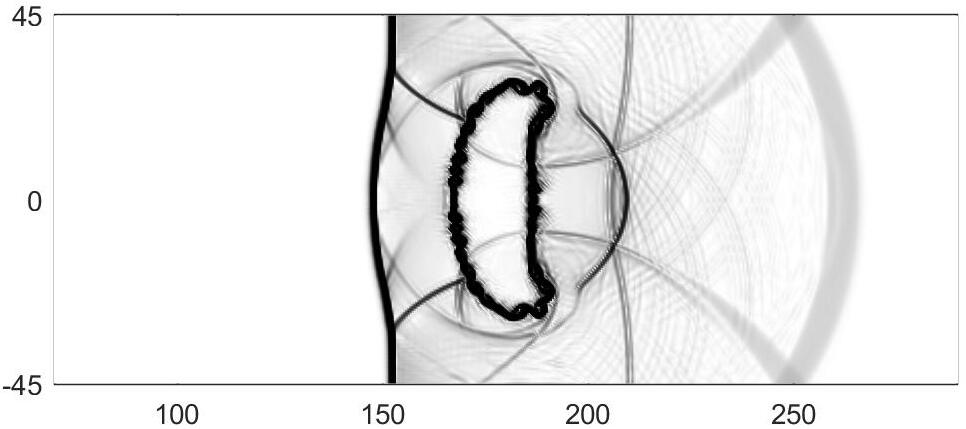}
			\end{subfigure}
			\begin{subfigure}[t]{0.32\textwidth}
				\centering
				\includegraphics[width=\textwidth, trim = 0 0 0 0, clip]{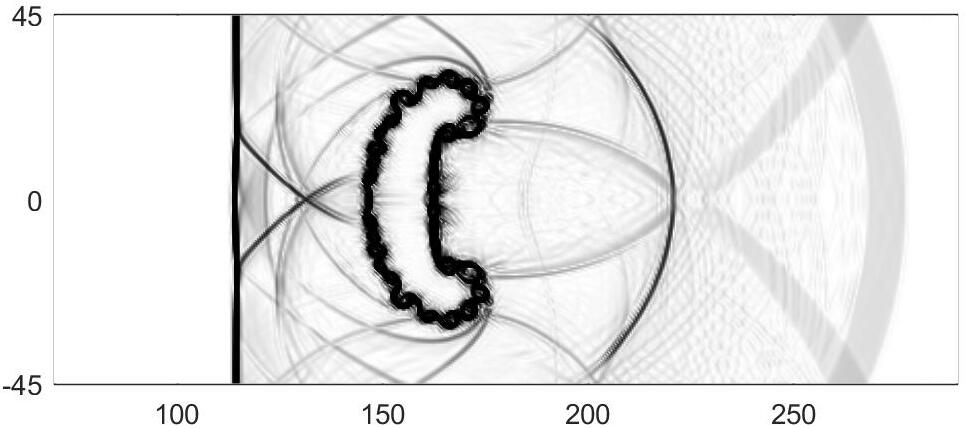}
			\end{subfigure}
			\begin{subfigure}[t]{0.32\textwidth}
				 		\centering
				\includegraphics[width=\textwidth, trim = 0 0 0 0, clip]{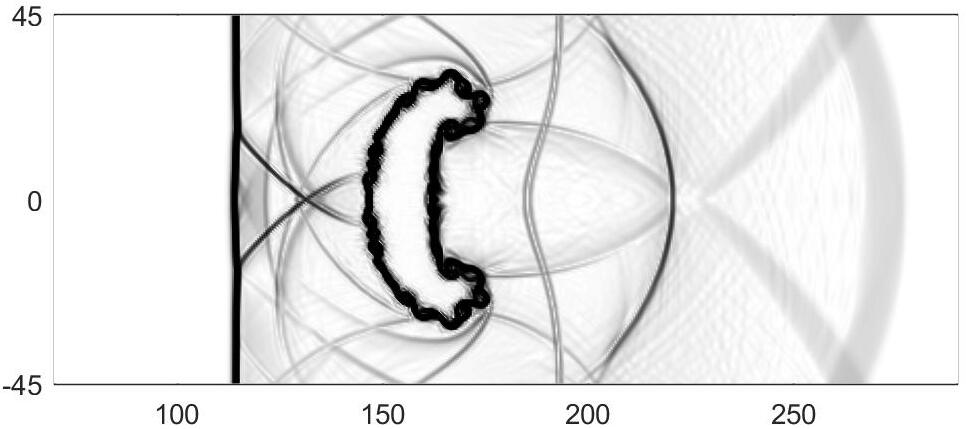}
			\end{subfigure}
			\begin{subfigure}[t]{0.32\textwidth}
				 		\centering
				\includegraphics[width=\textwidth, trim = 0 0 0 0, clip]{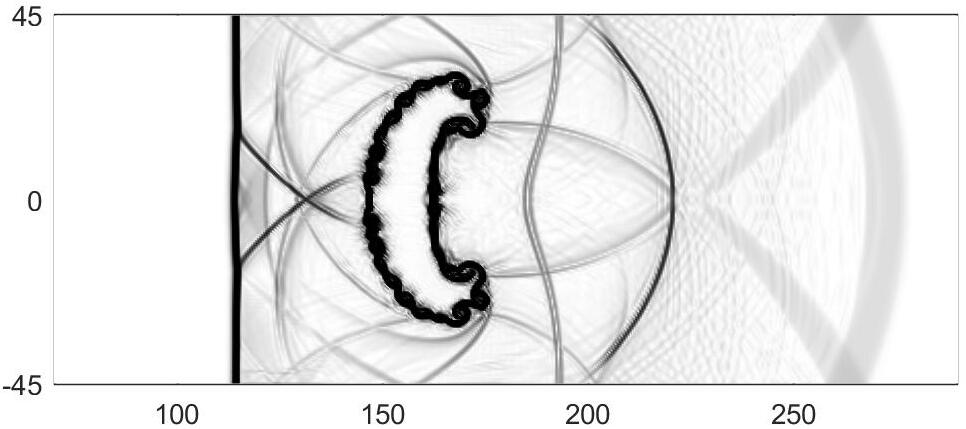}
			\end{subfigure}
			\begin{subfigure}[t]{0.32\textwidth}
				\centering
				\includegraphics[width=\textwidth, trim = 0 0 0 0, clip]{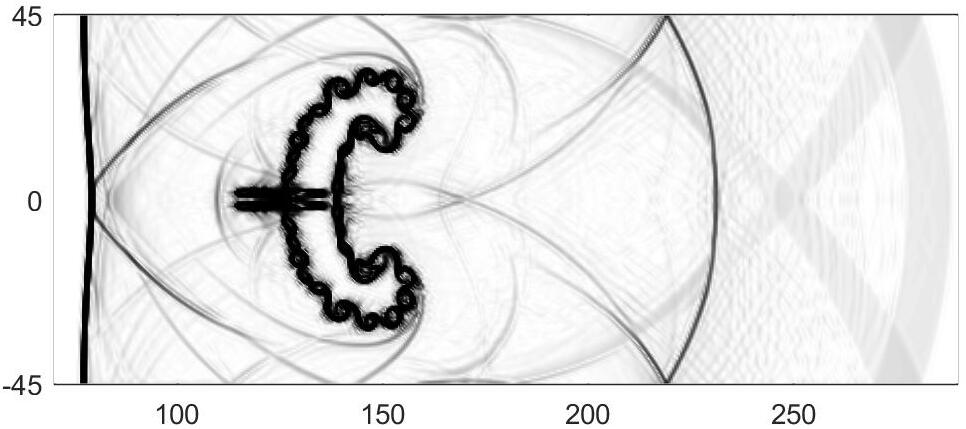}
				\caption{Without entropy-preserving limiter.}\label{Fig:2D_shockbb1_RCEOS_035a}
			\end{subfigure}
			\begin{subfigure}[t]{0.32\textwidth}
						\centering
				\includegraphics[width=\textwidth, trim = 0 0 0 0, clip]{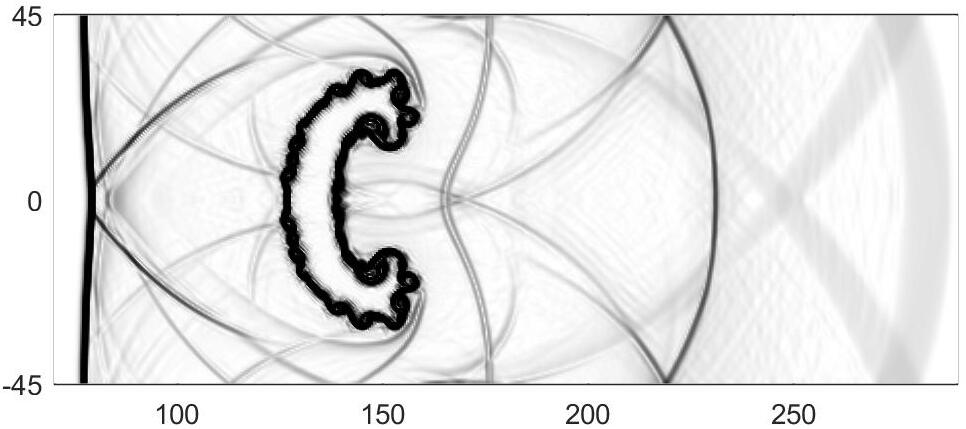}
				\caption{Local entropy-preserving with {\tt New I} \eqref{2D_CUI}.}
				\label{Fig:2D_shockbb1_RCEOS_035b}
			\end{subfigure}
			\begin{subfigure}[t]{0.32\textwidth}
						\centering
				\includegraphics[width=\textwidth, trim = 0 0 0 0, clip]{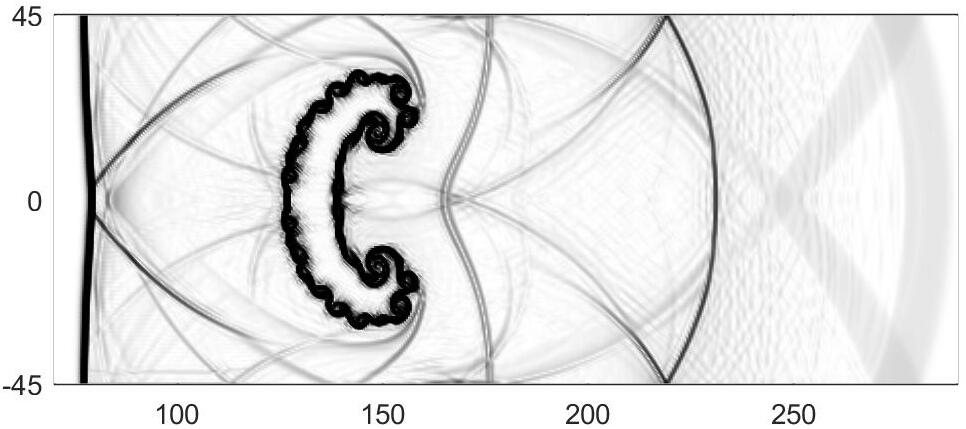}
				\caption{Local entropy-preserving with {\tt New II} \eqref{2DMPrelax}.}
				\label{Fig:2D_shockbb1_RCEOS_035c}
			\end{subfigure}
			\caption{Example \ref{Ex5.2.5}: Schlieren images of $\rho$ at $t=90,180,270,360,450$ (from top to bottom), obtained using the $\mathbb{P}^2$-based DG scheme without an entropy-preserving limiter and with our locally entropy-preserving limiters. Computations are performed on a uniform grid with $\dx = \dy = 1/2$.}
		\end{figure}

\begin{example}{(Shock-Vortex Interaction)} \label{EX:SV}
	This example examines the interaction between a shock wave and a vortex, a problem previously studied for the ideal EOS \eqref{ID-EOS} in \cite{duan2019high,chen2022physical}. 
	Here, we extend this test to the relativistic Euler system with the IP-EOS \eqref{hEOS2}. 
	
	The initial condition is given by
	\begin{equation*}
		\mathbf{V}(x,y,0) = 
		\begin{dcases}
			(33.703677019208184,-0.462656511475986,0,42.706741066043749)^\top =:\mathbf{V}_{\rm L}, & x < -0.6, \\[5pt]
			\left(
			h\big(\hat{\theta}(x,y)\big)^2 \hat{\theta}(x,y),
			\frac{w_1(x,y)-0.7}{1-0.7 \, w_1(x,y)},
			\frac{\sqrt{0.51}w_2(x,y)}{1-0.7 \, w_1(x,y)},
			h\big(\hat{\theta}(x,y)\big)^2 \hat{\theta}^2(x,y)
			\right)^\top, & x \ge -0.6,
		\end{dcases}
	\end{equation*}
	where $h$ is defined in \eqref{hEOS2}, and the functions $\hat{\theta}$, $w_1$, and $w_2$ are given by
	\begin{equation*}
		\hat{\theta}(x,y) = 1-0.02 \, \exp\left(1-\frac{x^2}{0.51}-y^2\right), 
		\quad
		w_1(x,y) = -y \hat{f}(x,y), 
		\quad
		w_2(x,y) = \frac{x}{\sqrt{0.51}}\hat{f}(x,y),
	\end{equation*}
	with
	\begin{equation*}
		\hat{f}(x,y) = \left(
		\frac
		{h\big(\hat{\theta}(x,y)\big)^2}
		{
			4 \big(1-\hat{\theta}(x,y)\big)
			\Big[
			h\big(\hat{\theta}(x,y)\big)
			+
			h'\big( \hat{\theta}(x,y)\big) \cdot \hat{\theta}(x,y)
			\Big]
		}
		+\frac{x^2}{0.51}+y^2
		\right)^{-1/2}.
	\end{equation*}
	This setup ensures that
	\begin{equation*}
		\lim_{x\rightarrow -0.6^-} \mathbf{V}(x,y,0) = \mathbf{V}_{\rm L}, 
		\quad
		\lim_{x\rightarrow -0.6^+} \mathbf{V}(x,y,0) 
		\approx 
		\left((2+\sqrt{5})^2,-0.7, 0, (2+\sqrt{5})^2\right)^\top
		=:\mathbf{V}_{\rm R},
	\end{equation*}
	which represents a stationary shock at $x = -0.6$. On the right side of the shock, an initially centered vortex at $(0,0)$ moves leftward with a velocity of $0.7$. This initial condition is deduced by following the analytical method proposed in \cite{duan2022analytical}. 
	The computational domain is $[-17,3] \times [-5,5]$ with reflective boundary conditions at $y = \pm5$, inflow at $x=3$, and outflow at $x=-17$.
	
	We solve this problem up to $t = 20$ using the $\mathbb{P}^3$-based DG method with and without the entropy-preserving limiter on a uniform grid with $\Delta x = \Delta y = 1/40$. 
	The contour plots of $\log_{10}(1+|\nabla \rho|)$ are presented in Figure \ref{Fig:2D_SV_IPEOS}. 
	As the vortex interacts with the stationary shock wave, intricate wave structures emerge. The locally entropy-preserving DG schemes, incorporating either our {\tt New I} or {\tt New II} approach for entropy bound estimation, effectively capture these wave patterns. 
	In contrast, the results obtained without enforcing the MEP exhibit significant nonphysical oscillations, severely compromising physical reliability.

\begin{figure}[!htb]
	\centering
	\begin{subfigure}[t]{.32\textwidth}
		\centering
		\includegraphics[width=1\textwidth]{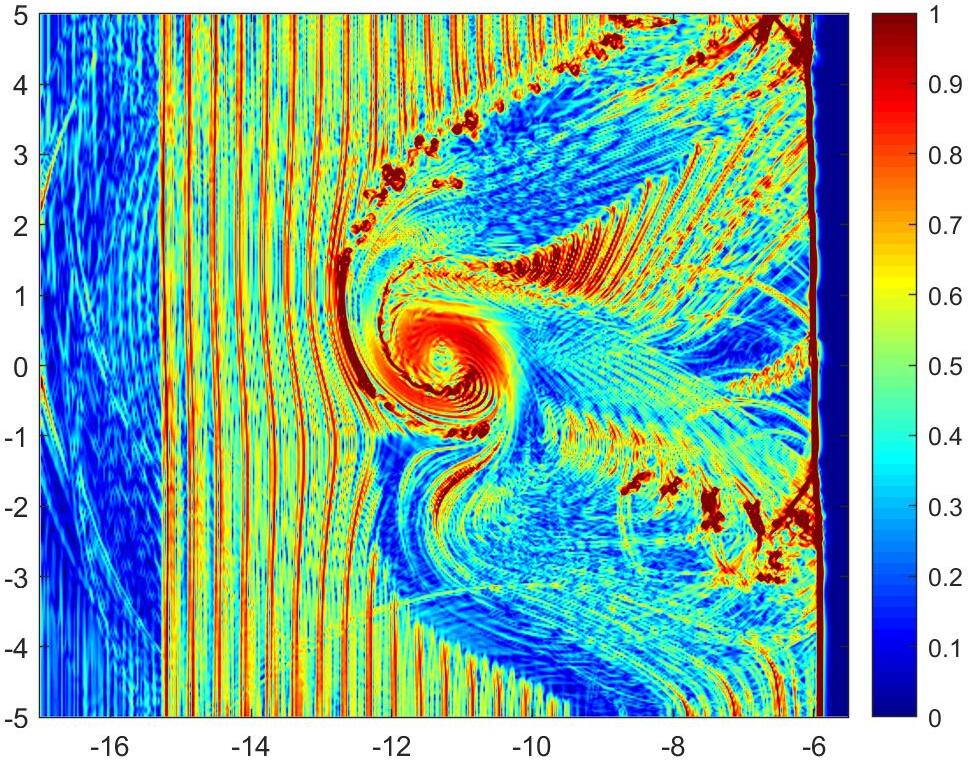}
		\caption{Without entropy-preserving limiter.}
	\end{subfigure}
	\begin{subfigure}[t]{.32\textwidth}
		\centering
		\includegraphics[width=1\textwidth]{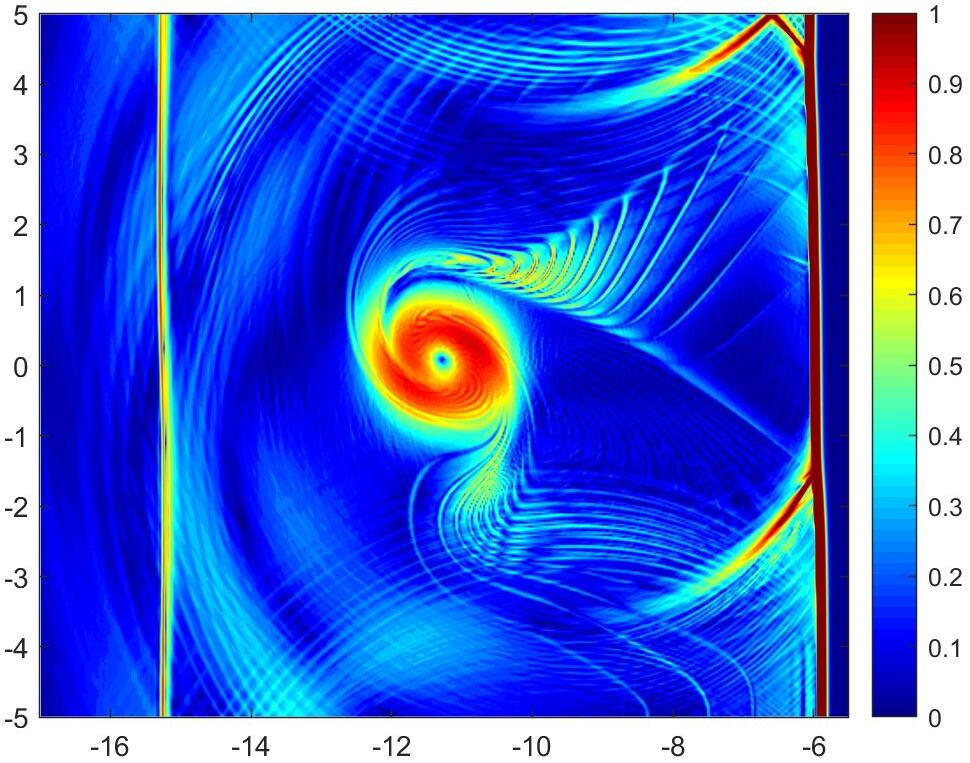}
		\caption{Local entropy-preserving with {\tt New I} \eqref{2D_CUI}.}
	\end{subfigure}
    \begin{subfigure}[t]{.32\textwidth}
		\centering
		\includegraphics[width=1\textwidth]{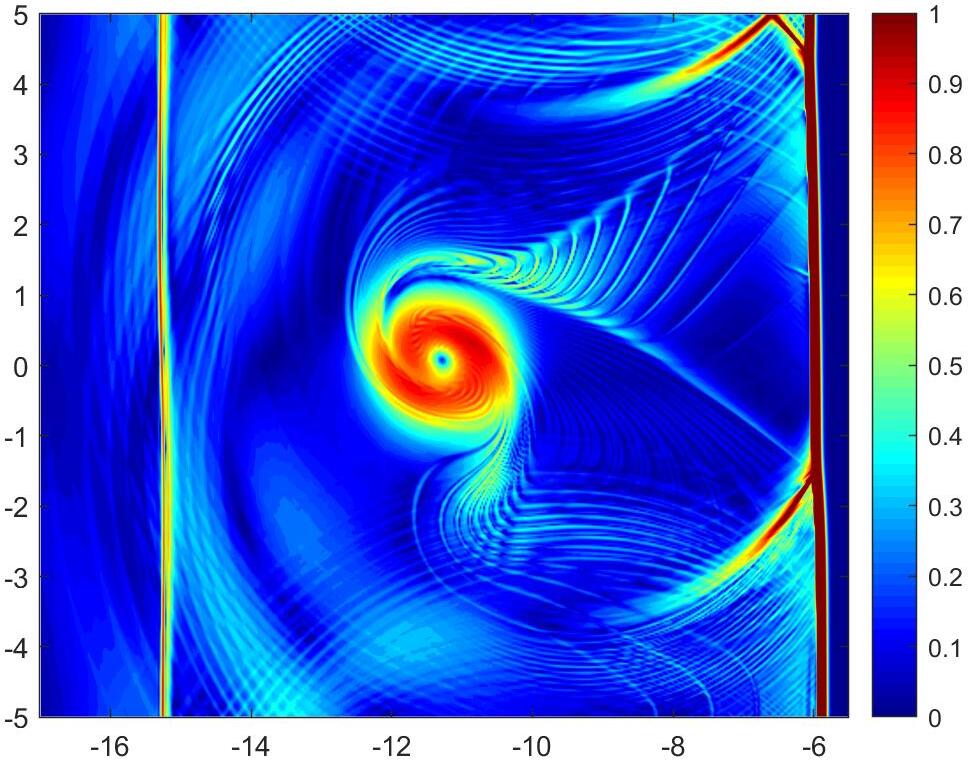}
		\caption{Local entropy-preserving with {\tt New II} \eqref{2DMPrelax}.}
	\end{subfigure}
	\caption{Example \ref{EX:SV}: Contour plots of ${\rm log}_{10}(1+|\nabla \rho|)$ at $t = 20$, obtained using the $\mathbb{P}^3$-based DG scheme without an entropy-preserving limiter and with our locally entropy-preserving limiters. Computations are performed on a uniform grid with $\Delta x = \Delta y = 1/40$.}\label{Fig:2D_SV_IPEOS}
\end{figure}

\end{example}


\section{Conclusions}
In this work, we have established the minimum entropy principle (MEP) for the relativistic Euler equations with a broad class of general equations of state (EOSs) that satisfy relativistic causality.  
At the continuous level, we identified a family of entropy pairs for the relativistic Euler equations and rigorously proved the strict convexity of entropy under a necessary and sufficient condition. This theoretical foundation allowed us to establish the relativistic MEP, overcoming the difficulties posed by the lack of explicit formulas for entropy and flux functions in terms of conservative variables. 

At the numerical level, we developed a rigorous framework for constructing provably entropy-preserving high-order schemes. One of the central challenges was the highly nonlinear and implicit dependence of entropy on the conservative variables, making it particularly difficult to enforce entropy preservation. To overcome this, we established a series of auxiliary theories through highly technical estimates, which provided key mathematical tools for proving entropy preservation in the relativistic setting. Additionally, we leveraged the geometric quasi-linearization (GQL) technique to reformulate nonlinear entropy constraints into equivalent linear ones, by introducing additional free parameters. 
Another key contribution of our work is the development of novel, robust, locally entropy-preserving high-order frameworks. In particular, we introduced two new approaches for estimating local lower bounds of specific entropy, which proved effective in both smooth and discontinuous cases, even in the presence of shock waves at unknown locations. Numerical experiments demonstrated that our entropy-preserving methods maintain high-order accuracy while effectively suppressing spurious oscillations, outperforming existing local entropy minimum estimation techniques.
Moreover, our approach is not limited to Euler equations but provides a versatile framework applicable to other models that admit an MEP. By establishing a rigorous theoretical foundation and developing robust numerical techniques, this study represents a significant step forward in the understanding and preservation of entropy in relativistic hydrodynamics. 

Future research will explore extending these methods to more complex relativistic systems, such as magnetohydrodynamics (MHD) and general relativistic flows, further broadening the applicability of our entropy-preserving framework.

\renewcommand\baselinestretch{0.86}

	\bibliographystyle{siamplain}
	\bibliography{references_short}

\end{document}